\documentclass{article}

\usepackage{amssymb,amsmath,amsthm}
\usepackage{texdraw}
\usepackage[latin1]{inputenc}
\usepackage{graphicx}
\usepackage[toc,page]{appendix}
\usepackage{comment}

\numberwithin{equation}{section}

\includecomment{mac}

\def \dt {{\,\mathrm dt}}
\def \dr {{\,\mathrm dr}}
\def \d {{\,\mathrm d}}
\def \dtau {{\,\mathrm d\tau}}
\def \dtheta {{\,\mathrm d\vartheta}}

\def \RR {{\mathbb R}}
\def \NN {{\mathbb N}}
\def \ZZ {{\mathbb Z}}

\newcommand{\GRe}[4]{\Gamma_{#1}}
\newcommand{\GReU}[3]{\Gamma}

\def \ve {\varepsilon}
\def \ep {\varepsilon}
\def \eps {\varepsilon}
\def \vt {\vartheta}
\def \vp {\varphi}

\def \action{\mathcal{A}}
\def \lagr{\mathcal{L}}
\def \ell{\mathcal{E}}
\def \cont{\mathcal{C}}
\def \morse{\mathcal{M}}
\def \pjump {\Delta_{\mathrm{pos}}}
\def \vjump {\Delta_{\mathrm{vel}}}
\newcommand{\sphere}{\mathbb{S}^{d-1}}
\newcommand{\cerchio}{\mathbb{S}^{1}}

\newcommand{\Poh}{\mathcal{P}}
\newcommand{\Seh}{\mathcal{S}}
\newcommand{\setin}{\mathrm{In}}
\newcommand{\setout}{\mathrm{Out}}

\newcommand{\suc}[2]{\left( {#1}_{#2}\right)_{#2}}
\newtheorem{theorem}{Theorem}[section]
\newtheorem{lemma}[theorem]{Lemma}

\newtheorem{definition}[theorem]{Definition}
\newtheorem{proposition}[theorem]{Proposition}
\newtheorem{remark}[theorem]{Remark}
\newtheorem{corollary}[theorem]{Corollary}

\title{\sc Entire Parabolic Trajectories\\
           as Minimal Phase Transitions}
\author{%
Vivina Barutello\footnote{Dipartimento di Matematica, Universit\`a degli Studi
di Torino, Via Carlo Alberto, 10,  10123 Torino,
Italy. e-mail: \texttt{vivina.barutello@unito.it}}
\and
Susanna Terracini\footnote{Dipartimento di Matematica e Applicazioni, Universit\`a degli Studi
di Milano-Bicocca, Via Bicocca degli Arcimboldi, 8, 20126 Milano,
Italy. e-mail: \texttt{susanna.terracini@unimib.it}}
\and
Gianmaria Verzini\footnote{Dipartimento di Matematica, Politecnico di Milano, Piazza
Leonardo da Vinci, 32,  20133 Milano, Italy.
e-mail: \texttt{gianmaria.verzini@polimi.it}}
}

\begin{document}
\maketitle

\begin{abstract}
For the class of anisotropic Kepler problems in $\RR^d\setminus\{0\}$ with homogeneous potentials, we seek parabolic trajectories having prescribed asymptotic directions at infinity and which, in addition, are Morse minimizing geodesics for the Jacobi metric. Such trajectories correspond to saddle heteroclinics on the collision manifold, are  structurally unstable and appear only for a codimension-one submanifold of such potentials. We give them a variational characterization in terms of the behavior of the parameter-free minimizers of an associated obstacle problem. We then give a full characterization of such a codimension-one manifold of potentials and we show how to parameterize it with respect to the degree of homogeneity.
\end{abstract}

\section{Introduction}\label{sec:intro}

Let us consider the conservative dynamical system
\begin{equation}\label{eq:dynsys}
\ddot x (t) =\nabla V (x(t)),\qquad x \in \RR^d \setminus \mathcal{X},
\end{equation}
where $d\geq 2$, the potential $V$ is smooth outside --and goes to infinity near-- the collision set $\mathcal{X}$,
and it satisfies the normalization condition
\[
0 = \liminf_{|x| \to \infty} V(x) < V(x) \quad \text{for every } x .
\]
A (global) \emph{parabolic trajectory} for \eqref{eq:dynsys} is a collisionless solution
which has null energy:
\begin{equation}\label{eq:dynsys2}
\frac12 |\dot x(t)|^2 = V(x(t)), \qquad \text{for every } t \in \RR.
\end{equation}
In the Kepler problem ($V(x)=1/|x|$) all global zero-energy
trajectories are indeed parabola.
In celestial mechanics, and more in general in the theory of singular hamiltonian systems,
parabolic trajectories play a central role
and they are known to carry precious information on the behavior of general solutions near collisions
\cite{Chazy2,Chazy1,Chenciner1998,MarSaari76,Pollard67,SaaHulk81,Saari71,Saari84}.
Our aim in this paper is to introduce a
new variational approach to the existence and characterization of such trajectories for homogeneous potentials.

Let us then assume that
\begin{center}
$V$ is homogeneous of degree $-\alpha$, for some $\alpha \in (0,2)$.
\end{center}
In this setting parabolic trajectories can be equivalently defined as solutions
satisfying $|\dot x(t)| \to 0$ as $t \to \pm \infty$ \cite{SaaHulk81,Chenciner1998}, see also Appendix
\ref{app:stime}. Furthermore, such orbits enjoy asymptotic properties, regarding both
$|x|$ and $x/|x|$. First of all $|x(t)|\to\infty$ as $t \to \pm \infty$;
on the other hand, recalling that
a \emph{central configuration} for $V$ is a unitary vector which is a critical
point of the restriction of $V$ to the sphere $\sphere$,
the normalized configuration $x(t)/|x(t)|$
has infinitesimal distance from the set of central configurations of $V$,
as $t \to \pm \infty$. In particular, whenever this set is discrete,
we have that
\[
\frac{x(t)}{|x(t)|} \to \xi^{\pm}, \qquad \text{as } t \to \pm\infty,
\]
where $\xi^{\pm}$ are central configurations.

From this point of view, as enlightened by McGehee in \cite{McG1974},
parabolic trajectories can be seen as heteroclinic connections in
the collision manifold between two asymptotic configurations at infinity (in time and space).
This characterization has been exploited, starting from McGehee, and up to the work of Moeckel \cite{Moeckel89},
in order to study the motion in the three-body problem near collisions.
In the same perspective, an exhaustive study of the planar anisotropic Kepler problem was performed by Devaney. The
potential he considers in \cite{DevInvMath1978,DevProgMath1981} has two pairs of non degenerate
central configurations, corresponding to two minima and two maxima for the restricted potential.
In this situation, parabolic trajectories can be classified into different
types, depending on the limiting directions: the typical ones, which always exist,
connect two central configurations which correspond to maxima;
connections minimum-maximum generically exist and are quite stable objects; finally,
connections minimum-minimum generically do not exist (in the setting of \cite{DevInvMath1978},
these are saddle-saddle heteroclinic connections in the phase plane of the angular variable).
Our aim is to provide conditions for the existence
of this latter kind of trajectories, in terms of a variational characterization
involving a minimization problem.

An interesting interpretation of the existence of  parabolic trajectories which are free Morse minimizers for the action, in the special case of two dimensions, can be given in terms of the weak KAM theory (see \cite{Fathibook,FathiMadernaM07,FathiSinconolfi04}). Indeed, the existence of one minimal entire, collision free, parabolic trajectory induces a lamination of the plane by minimal trajectories (all its rescaled orbits), all homoclinic to the minimal Aubry set (in the present case, the
infinity)  and, correspondingly, leads to the existence of an entire  solution of the associated Hamilton-Jacobi equation on the punctured plane
(see Remark \ref{rem:hj} and also Remark \ref{rem:weakkam} below).

To start with, given any $V$ as above, $a<b$, and $x$ belonging to
the Sobolev space $H^1\left((a,b);\RR^d  \right)$, let us consider the
(possibly infinite) \emph{lagrangian action functional} with lagrangian $\lagr$:
\[
\action(x) = \action([a,b];x):= \int_{a}^{b} \lagr(\dot x(t),x(t)) \, \dt, \qquad
\lagr(\dot x,x):= \frac12 |\dot x|^2 + V(x)
\]
(of course, the action may be finite even though the path $x$ interacts with the
singularity of the potential).
Given $\xi^-$ and $\xi^+$ ingoing and outgoing asymptotic directions, we consider the following class of minimizers.
\begin{definition}\label{defi:Morse_min}
We say that $x\in H^1_{\mathrm{loc}}(\RR)$ is a \emph{(free) minimizer of $\action$ of parabolic type, in the sense of Morse}, if
\begin{itemize}
 \item $\min_{t\in\RR}|x(t)|>0$;
 \item $|x(t)|\to+\infty$, $x(t)/|x(t)|\to\xi^\pm$ as $t\to\pm\infty$;
 \item for every $a<b$, $a'<b'$, and $z\in H^1(a',b')$, there holds
\[
z(a')=x(a),\ z(b')=x(b) \qquad\implies\qquad \action([a,b];x)\leq\action([a',b'];z).
\]
\end{itemize}
\end{definition}
In some situations one may be also interested in Morse minimizers in a local sense,
for instance imposing some topological
constraint. In any case, a parabolic Morse minimizer is of class $\cont^2$ and, because of
Maupertuis' principle, it satisfies the Euler-Lagrange equation \eqref{eq:dynsys}
and the zero-energy relation \eqref{eq:dynsys2}.

We stress the fact that, in general, a potential $V$ does not need to admit a parabolic Morse minimizer.
To deal with this intrinsic structural instability we need to introduce an auxiliary parameter
and look for parabolic orbits as pairs trajectory-parameter. To clarify the role of the additional parameter, it may
be helpful to let the potential vary in a class. As a toy model, we will work on a class shaped on a
multidimensional version of the case described by Devaney, choosing as parameter the homogeneity exponent $-\alpha$.

More precisely, let us fix $\xi^+\neq\xi^-$ in $\sphere$ and $V_{\min}>0$, and let us define the metric spaces
\[
\begin{array}{cl}
\Seh &=\left\{ V\in \cont^2(\sphere):
 \begin{array}{l}
   s\in \sphere \text{ implies }V(s)\geq V(\xi^\pm)=V_{\min};
   \smallskip\\
   \exists \delta>0, \mu>0 \text{ such that }|s-\xi^{\pm}|<\delta
   \smallskip\\
    \text{implies } V(s) - V(\xi^{\pm}) \geq \mu|s-\xi^{\pm}|^2
 \end{array}
\right\},\medskip\\
\Poh &=\left\{ (V,\alpha)\in \cont^2(\sphere)
\times (0,2):\, V\in\Seh\right\},
\end{array}
\]
the latter being equipped with the product distance. With some abuse of notation, we will systematically identify any element of $\Poh$ with the homogeneous extension of its first component:
\[
\Poh\ni(V,\alpha)\quad\leftrightarrow\quad V\in \cont^2(\RR^d\setminus\{0\};\RR),\,
V(x):=\dfrac{V\left(x/|x|\right)}{|x|^\alpha},
\]
in such a way that $\xi^{\pm}$ are non-degenerate, globally minimal central configurations for $V$, which
singular set $\mathcal{X}$ coincides with the origin.

As we will show, the property of a potential to admit parabolic minimizers
is related to its behavior with respect to the following fixed-endpoints problem.
For any $V\in\Poh$, let us define
\[
c(V) := \inf \left\{ \action\left([a,b];x\right) : a<b, \, x \in H^1(a,b), \, x(a)=\xi^-,\, x(b)=\xi^+  \right\};
\]
it is not difficult to prove that such infimum is indeed a minimum,
achieved by a possibly colliding solution. More precisely,
recalling that a homothetic motion is a trajectory with constant angular part,
the following result holds.
\begin{proposition}\label{propo:alter}
Let $V \in \Poh$; then one of the following alternatives is satisfied (see Figure \ref{fig:intro}):
\begin{enumerate}
	\item[(1)] $c(V) = 4\sqrt{2V_{\min}}/(2-\alpha)$ is achieved by the
	juxtaposition of two homothetic motions, the first connecting $\xi^-$ to
	the origin and the second the origin to $\xi^+$;
	\item[(2)] $c(V) < 4\sqrt{2V_{\min}}/(2-\alpha)$,
	and it is achieved by trajectories which are uniformly bounded away from the origin.
\end{enumerate}
\end{proposition}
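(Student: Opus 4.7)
The natural strategy is to invoke Maupertuis' principle to turn $c(V)$ into a Jacobi-length minimization, and then to exploit the homogeneity of $V$ along radial paths. From the pointwise inequality $\tfrac12|\dot\gamma|^2+V(\gamma)\geq \sqrt{2V(\gamma)}\,|\dot\gamma|$ (with equality iff $\gamma$ has zero energy), a standard time-reparametrization argument gives
\[
c(V) = \inf\left\{\int_a^b\sqrt{2V(\gamma(t))}\,|\dot\gamma(t)|\dt\,:\, a<b,\ \gamma\in H^1((a,b);\RR^d),\ \gamma(a)=\xi^-,\ \gamma(b)=\xi^+\right\}.
\]
Since $\alpha<2$, admissible zero-energy parametrizations exist even when $\gamma$ touches the origin, so this reformulation covers collision paths as well.

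The next step is the key radial computation. For any $\xi\in\sphere$, the Jacobi length of the radial path $r\mapsto r\xi$, $r\in(0,1]$, equals
\[
\int_0^1 \sqrt{2V(\xi)}\,r^{-\alpha/2}\dr = \frac{2\sqrt{2V(\xi)}}{2-\alpha},
\]
which is finite precisely because $\alpha<2$ and is minimized over $\xi\in\sphere$ at $\xi=\xi^\pm$. Concatenating the $\xi^-$-ejection with the $\xi^+$-collision through the origin then exhibits an admissible path of Jacobi length exactly $4\sqrt{2V_{\min}}/(2-\alpha)$, yielding the upper bound $c(V)\leq 4\sqrt{2V_{\min}}/(2-\alpha)$. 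Conversely, for any curve $\gamma$ touching the origin at some $t^*$, the bound $V(x)\geq V_{\min}|x|^{-\alpha}$, the inequality $|\dot\gamma|\geq|\tfrac{d}{dt}|\gamma||$, and the elementary total-variation estimate
\[
\int_a^{t^*} r^{-\alpha/2}|\dot r|\,dt \geq \bigl|F(|\gamma(t^*)|)-F(|\gamma(a)|)\bigr|, \qquad F(r):=\frac{r^{1-\alpha/2}}{1-\alpha/2},
\]
together with its analogue on $[t^*,b]$, show that each of the two segments contributes at least $2\sqrt{2V_{\min}}/(2-\alpha)$ to the Jacobi length. Hence every collision path costs at least the threshold.

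To finish I need existence of a minimizer, which is the technically delicate step. Since $\alpha<2$, the same total-variation argument gives, for any $y$ with $|y|=R>1$,
\[
d_J(\xi^-,y)\;\geq\;\sqrt{2V_{\min}}\;\frac{R^{1-\alpha/2}-1}{1-\alpha/2},
\]
which diverges as $R\to+\infty$. Hence minimizing sequences, reparametrized by normalized Jacobi arc-length, are confined in a bounded region and are uniformly Lipschitz there; Arzel\`a--Ascoli yields a uniformly convergent subsequence, and the lower semicontinuity of the length functional (with the radial lower bound above controlling possible concentration at the origin) delivers a minimizer $\gamma^*$, absolutely continuous and \emph{a priori} allowed to meet $0$.

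The dichotomy then follows at once. If $c(V)=4\sqrt{2V_{\min}}/(2-\alpha)$, the juxtaposition of the two radial homothetic motions constructed above realizes this infimum and alternative (1) holds. If instead $c(V)<4\sqrt{2V_{\min}}/(2-\alpha)$, every admissible curve touching the origin costs at least the threshold and therefore strictly more than $c(V)$; hence $\gamma^*$ must remain uniformly bounded away from $0$, which is alternative (2). The main obstacle in the whole argument is the existence step, where one must simultaneously balance Lipschitz compactness against the $V\to\infty$ singularity at $0$ and propagate the fixed endpoints to the limit---the integrability threshold $\alpha<2$ is exactly what makes both aspects work.
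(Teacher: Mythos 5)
Your argument is correct in substance but follows a genuinely different route from the paper's. The paper derives the proposition from the machinery of the constrained Bolza levels $m(V,\ve)$ built in Sections \ref{sec:Bolza}--\ref{sec:ParMin}: existence comes from minimizing the Maupertuis functional $J$ under the constraint $\min|x|=\ve$ (Lemma \ref{lem:Boltza_exists}), the identification of colliding minimizers with the double homothetic comes from Lemma \ref{lem:omo2}, and the strict inequality in alternative (2) is obtained from the \emph{strict} monotonicity of $\ve\mapsto\gamma(V,\ve)=(m(V,\ve)-m(V,0))/\ve^{\alpha_*}$, proved by a homothetic rescaling plus uniqueness for the Cauchy problem (the comparison path in \eqref{eq:come_vuoi} cannot be $\cont^1$). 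You instead pass to the Jacobi length, compute the radial cost explicitly, and read the dichotomy off the value of $c(V)$: the double homothetic gives the upper bound $c(V)\le 4\sqrt{2V_{\min}}/(2-\alpha)$, the total-variation estimate gives the matching lower bound for every colliding path, and the two alternatives are then exhaustive. This is more elementary and self-contained; what it does not give (and the paper's version does) is the converse structural fact that a non-colliding minimizer forces $c(V)$ strictly below the threshold, i.e.\ that in case (1) \emph{every} minimizer is the double homothetic --- but the proposition as literally stated does not require this.

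Two points need tightening. First, alternative (2) asserts (see the caption of Figure \ref{fig:intro}) a \emph{uniform} ball $B_\delta(0)$ avoided by \emph{all} minimizers; as written, your conclusion only shows that each minimizer individually misses the origin. The uniform version is, however, immediate from the quantitative form of your own total-variation estimate: if $\min_t|\gamma(t)|=\ve$ then the Jacobi length is at least $\tfrac{4\sqrt{2V_{\min}}}{2-\alpha}\left(1-\ve^{\alpha_*}\right)$, so $c(V)=\tfrac{4\sqrt{2V_{\min}}}{2-\alpha}-\eta$ with $\eta>0$ forces $\ve^{\alpha_*}\ge \eta(2-\alpha)/(4\sqrt{2V_{\min}})$ for every minimizer; you should add this line. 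Second, the existence step is only sketched: lower semicontinuity of the weighted length functional with a weight blowing up at the origin, and the equivalence between the $H^1$ action infimum and the Jacobi-length infimum over paths that may cross the singularity, both deserve explicit justification (the paper sidesteps this by proving weak $H^1$ lower semicontinuity of $J$ for each constrained problem and then taking $\inf_{\ve} m(V,\ve)$). Neither issue invalidates the strategy.
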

\begin{figure}[!t]
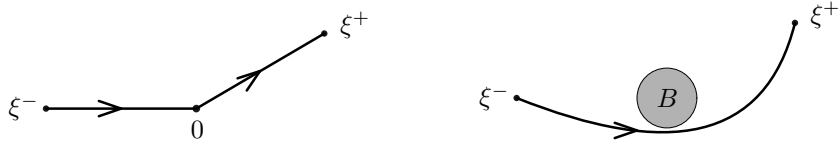

\begin{center}
\begin{mac}
\begin{tabular}{ccc}
\begin{texdraw}
\drawdim cm  \setunitscale 1
\linewd 0.03
\fcir f:0.1 r:0.05
\linewd 0.01
\linewd 0.03
\move (-2 0)
\fcir f:0.1 r:0.04
\lvec (0 0)
\lvec (1.7 1)
\fcir f:0.1 r:0.04
\move (-1.1 0) \arrowheadtype t:V \arrowheadsize l:0.3 w:0.2 \avec (-1 0)
\move (0.81 0.475) \arrowheadtype t:V \arrowheadsize l:0.3 w:0.2 \avec (0.85 0.5)

\textref h:C v:C
\htext (-2.3 0) {$\xi^-$}
\htext (2.1 1.1) {$\xi^+$}
\htext (0 -0.3) {$0$}
\end{texdraw}

& \hspace{1cm} &

\begin{texdraw}
\drawdim cm  \setunitscale 1
\linewd 0.01
\fcir f:0.7 r:.4
\larc r:.4 sd:0 ed:360
\linewd 0.03
\move (-2 0)
\fcir f:0.1 r:0.04
\move (1.7 1)
\fcir f:0.1 r:0.04
\move (-2 0)\clvec(0 -0.8)(1.3 -0.6)(1.7 1)
\move (-0.43 -0.428) \arrowheadtype t:V \arrowheadsize l:0.3 w:0.2 \avec (-0.4 -0.43)
\textref h:C v:C
\htext (0 0) {$B$}
\htext (-2.3 0) {$\xi^-$}
\htext (2.1 1.1) {$\xi^+$}
\end{texdraw}
\end{tabular}
\end{mac}
\end{center}
\caption{ at left, $c(V)$ is achieved by a double-homothetic motion (Proposition \ref{propo:alter}, case (1)); at right  $c(V)$ is achieved by a non-collision trajectory (Proposition \ref{propo:alter}, case (2)). When the second situation occurs, there exists a ball $B$, centered at the origin, such that any trajectory that achieves $c(V)$ does not intersect $B$.\label{fig:intro}}
\end{figure}
Following the previous proposition, we distinguish potentials with ``inner'' minimizers
(i.e. minimizers which pass through the origin) from potential with ``outer'' ones:
\begin{equation*}
	\setin := \left\{ V \in \Poh : c(V)=4\sqrt{2V_{\min}}/(2-\alpha)  \right\},
\end{equation*}
\begin{equation*}
	\setout := \left\{ V \in \Poh : c(V)<4\sqrt{2V_{\min}}/(2-\alpha)  \right\}.
\end{equation*}
It is easy to see that these two sets are disjoint and their union is the whole $\Poh$;
moreover we will show that the first one is closed while the second is open.
We are interested in their common boundary, that is
\begin{equation*}
\Pi :=\partial{\setin} \cap \partial{\setout}.
\end{equation*}
The separating property of the common boundary is underlined by the following lemma.
\begin{lemma}\label{lem:struct}
There exists an open nonempty set $\Sigma \subset \Seh$, and a continuous function
             $\bar \alpha : \Sigma \to (0,2)$ such that
             \[
               \Pi =\left\{(V,\bar \alpha(V)):\,V \in \Sigma\right\}.
             \]
\end{lemma}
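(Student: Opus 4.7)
The plan is to reformulate $c(V,\alpha)$ via a polar substitution that makes the dependence on $\alpha$ manifestly monotone; the lemma then reduces to continuity and non-emptiness statements. First I would establish that $(V,\alpha)\mapsto c(V,\alpha)$ is continuous on $\Poh$: upper semicontinuity follows from using a near-optimal competitor for $(V,\alpha)$ as a test path for nearby $(V_n,\alpha_n)$, while lower semicontinuity requires extracting a convergent subsequence from $(V_n,\alpha_n)$-minimizers via the uniform estimates on Maupertuis minimizers developed in the paper (and Appendix~\ref{app:stime}). Setting $g(V,\alpha):=c(V,\alpha)-4\sqrt{2V_{\min}}/(2-\alpha)$, by Proposition~\ref{propo:alter} we have $g\le 0$, $\setin=g^{-1}(0)$ is closed, $\setout=g^{-1}((-\infty,0))$ is open, and $\Pi=\partial\setin=\partial\setout$.

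To uncover the monotonicity, I would introduce $\beta:=1-\alpha/2$ and the substitution $\rho:=r^\beta$ in polar coordinates $(r,\omega)\in(0,\infty)\times\sphere$. A direct calculation yields
\[
\sqrt{2V(x)}\,|dx|=\frac{1}{\beta}\sqrt{2V_\Omega(\omega)\bigl(d\rho^2+\beta^2\rho^2\,d\omega^2\bigr)},
\]
so by Maupertuis' principle $\widetilde c(V,\alpha):=\beta\,c(V,\alpha)$ equals the infimum of the corresponding Jacobi length over paths $\gamma=(\rho,\omega)$ from $(1,\xi^-)$ to $(1,\xi^+)$. The double-homothetic, being purely radial, has $\beta$-independent length $2\sqrt{2V_{\min}}$, which is the universal upper bound on $\widetilde c$; hence $\setin=\{\widetilde c=2\sqrt{2V_{\min}}\}$. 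On any non-radial path the integrand under the square root is strictly increasing in $\beta$; thus $\alpha\mapsto\widetilde c(V,\alpha)$ is non-increasing and, wherever $(V,\alpha)\in\setout$ (so the minimizer is necessarily non-radial), strictly decreasing: for any $\alpha_2>\alpha_1$, if $\gamma_1$ is a non-radial minimizer for $(V,\alpha_1)$ and $f$ denotes the corresponding action functional, then $\widetilde c(V,\alpha_2)\le f(\gamma_1,\beta_2)<f(\gamma_1,\beta_1)=\widetilde c(V,\alpha_1)$. In particular the section $\{\alpha:(V,\alpha)\in\setin\}$ is a closed initial segment $(0,\bar\alpha(V)]$ with $\bar\alpha(V)\in[0,2]$.

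Set $\Sigma:=\{V\in\Seh:\bar\alpha(V)\in(0,2)$ and $\bar\alpha$ is continuous at $V\}$ and consider $\bar\alpha:\Sigma\to(0,2)$. The strict decrease of $\widetilde c(V,\cdot)$ past $\bar\alpha(V)$ together with continuity of $\bar\alpha$ at $V\in\Sigma$ forces $\Pi\cap(\Sigma\times(0,2))$ to coincide with the graph of $\bar\alpha|_{\Sigma}$: indeed, if $\alpha<\bar\alpha(V)$, continuity gives $\bar\alpha(V')>\alpha$ for nearby $V'$, whence $(V',\alpha)\in\setin$, so $(V,\alpha)\notin\overline{\setout}$ and hence $(V,\alpha)\notin\Pi$. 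Non-emptiness of $\Sigma$ is obtained by choosing $V_\Omega$ whose minimal spherical Jacobi length from $\xi^-$ to $\xi^+$ strictly exceeds $2\sqrt{2V_{\min}}$: then $(V,\alpha)\in\setin$ for $\alpha$ close to $0$ (homothetic beats spherical paths) and $(V,\alpha)\in\setout$ for $\alpha$ close to $2$ (since $4\sqrt{2V_{\min}}/(2-\alpha)\to+\infty$), producing a transition $\bar\alpha(V)\in(0,2)$; continuity of $\bar\alpha$ at such $V$ can be verified directly from the explicit geometry.

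The main obstacle is the openness of $\Sigma$, which amounts to propagating continuity of $\bar\alpha$ from $V_0$ to a whole neighborhood. Upper semicontinuity is immediate from continuity of $g$ and the strict decrease past $\bar\alpha(V)$; lower semicontinuity is the delicate point. Should $V_n\to V_0$ with $\bar\alpha(V_n)\to\alpha_\star<\bar\alpha(V_0)$, then for any $\alpha\in(\alpha_\star,\bar\alpha(V_0))$ the $(V_n,\alpha)$-minimizers $\gamma_n$ are non-collision yet $\widetilde c(V_n,\alpha)\to 2\sqrt{2V_{\min}}$; the argument must use $H^1$-compactness of minimizers together with a quantitative form of the $\beta$-monotonicity to extract a non-collision minimizer for $(V_0,\alpha)$ at the limit, contradicting $(V_0,\alpha)$ lying in the interior of the $\setin$-slice.
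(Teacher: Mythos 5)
Your conformal substitution $\rho=r^{\beta}$, $\beta=\alpha_*$, is correct and gives a clean alternative derivation of the monotonicity of $c$ in $\alpha$ (the paper obtains essentially the same monotonicity by reparameterizing a minimizer in time, Lemma \ref{lem:mono_gamma}). But your monotonicity is strict only where the minimizer is non-radial, i.e.\ on $\setout$; on $\setin$ the quantity $\widetilde c$ is identically $2\sqrt{2V_{\min}}$ and carries no information. This is the source of the gap. The lemma requires $\Pi$ to equal \emph{exactly} the graph of $\bar\alpha$ over $\Sigma$, and your argument only establishes $\Pi\cap(\Sigma\times(0,2))=\mathrm{graph}(\bar\alpha|_{\Sigma})$ after \emph{defining} $\Sigma$ as the continuity set of $\bar\alpha$; you never exclude points of $\Pi$ lying over $\Seh\setminus\Sigma$ (e.g.\ over a potential where $\bar\alpha$ fails lower semicontinuity, the whole segment between $\liminf\bar\alpha$ and $\bar\alpha(V)$ would sit in $\setin\cap\overline{\setout}=\Pi$, off the graph). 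The paper avoids this by replacing $c$ with the renormalized deficit $\gamma(V,\ve)=(m(V,\ve)-m(V,0))/\ve^{\alpha_*}$ and its limit $\gamma(V,0^+)$ (Lemma \ref{lem:gamma}), which separates the interior of $\setin$ ($\gamma>0$) from $\Pi$ ($\gamma=0$) and from $\setout$ ($\gamma<0$), and which is strictly decreasing in $\alpha$ with the \emph{uniform} rate $4\sqrt{2V_{\min}}/(2-\alpha_1)^2$ even inside $\setin$ (Lemma \ref{lem:mono_gamma}). That uniform strict monotonicity, combined with the characterization $\Pi=\{\gamma(\cdot,0^+)=0\}$ (Theorem \ref{teo:equivalent}), is what pins down $\bar\alpha(V)$ as the unique zero and makes $\Sigma=\{\gamma((V,0^+),0^+)>0\}$ the right domain (Corollary \ref{coro:defbaralpha}).

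The step you flag as ``delicate'' is indeed where the real work lies, and your sketched resolution would not go through: if $V_n\to V_0$ with $\bar\alpha(V_n)\to\alpha_\star<\bar\alpha(V_0)$ and $\alpha\in(\alpha_\star,\bar\alpha(V_0))$, the non-collision minimizers for $(V_n,\alpha)$ have minimal radii $\ve_n\to 0$ (otherwise compactness plus Proposition \ref{propo:alter} already gives a contradiction), so their un-rescaled limit is the collision path and no ``non-collision minimizer for $(V_0,\alpha)$'' appears. The paper handles exactly this degeneration by rescaling $x_n=\ve_n^{-1}y_n(\ve_n^{-(2+\alpha)/2}\cdot)$ to produce a constrained Morse minimizer, and controls the limit through the position/velocity jumps $\pjump,\vjump$ (Sections \ref{sec:Bolza}--\ref{sec:ParMin}); openness of $\Sigma$ then follows from continuity of $\pjump$, not of $c$. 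Finally, your non-emptiness criterion (spherical Jacobi length from $\xi^-$ to $\xi^+$ exceeding $2\sqrt{2V_{\min}}$) is not obviously sufficient, because competitors may dip toward $\rho=0$ where the angular part $\beta^2\rho^2\,d\omega^2$ of your metric degenerates; the correct condition (Lemma \ref{lem:O}) bounds from below the \emph{excess} $\sqrt{2\min_{\overline O}(V-V_{\min})}\cdot\mathrm{dist}(F^-,F^+)$ over a region separating $\xi^-$ from $\xi^+$, via the lower estimate of Lemma \ref{lem:est_from_below}.
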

\noindent Furthermore, we will provide explicit criteria in order to establish whether a potential $V\in\Seh$
belongs to the domain of the function $\bar\alpha$.

The main result of this paper states that the above graph coincides with the set of potentials
admitting parabolic Morse minimizers.
\begin{theorem}\label{MainTheorem}
$V \in \Poh$ admits a parabolic Morse minimizer if and only if $V \in \Pi$.
\end{theorem}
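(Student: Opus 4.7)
The plan is to prove the two implications separately, relying throughout on the scale invariance induced by the homogeneity of $V$.

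For the sufficiency direction, assume $V\in\Pi\subset\partial\setout$ and pick a sequence $V_n\in\setout$ with $V_n\to V$. By Proposition \ref{propo:alter}, each $V_n$ admits a minimizer $\hat x_n$ connecting $\xi^-$ to $\xi^+$, with $\rho_n:=\min|\hat x_n|>0$. A first step is to show $\rho_n\to 0$: otherwise, standard compactness (using the zero-energy identity $|\dot x|^2=2V(x)$ to bound velocities on compact sets away from the origin) would produce a non-collision limit minimizer for $V$, forcing $V\in\setout$ and contradicting $V\in\setin$. Exploiting homogeneity, set
\[
\tilde x_n(t) := \rho_n^{-1}\,\hat x_n\bigl(\rho_n^{(\alpha+2)/2}\,t\bigr),
\]
which still solves the system for $V_n$, is zero-energy, satisfies $\min|\tilde x_n|=1$, and joins $\rho_n^{-1}\xi^\pm$ (which recede to infinity). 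An Ascoli--Arzel\`a extraction gives a locally uniformly convergent subsequence $\tilde x_n\to x_\star$, solving the system for $V$. I would then check that $x_\star$ is defined on all of $\RR$, that $|x_\star|\to\infty$ and $x_\star/|x_\star|\to\xi^\pm$ as $t\to\pm\infty$, and that it is Morse minimizing---any admissible competitor $z$ can be spliced into a sufficiently long central segment of some $\tilde x_n$, and the resulting inequality passes to the limit by lower semicontinuity.

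For the necessity direction, assume $x$ is a parabolic Morse minimizer for $V$ and prove $V\in\setin$ and $V\in\partial\setout$ separately. To see $V\in\setin$, fix $T$ large, set $R:=|x(T)|$ (after reparametrizing in time so that $|x(-T)|=|x(T)|=R$), let $\xi^\pm_T:=x(\pm T)/R\to\xi^\pm$, and define $\tilde x_T(s):=(1/R)\,x\bigl(s\,R^{(\alpha+2)/2}\bigr)$. By Morse minimality of $x$ and the scaling $\action(\tilde x_T)=R^{-(2-\alpha)/2}\,\action(x|_{[-T,T]})$, $\tilde x_T$ realizes the minimum $c_T(V)$ among $H^1$ paths from $\xi^-_T$ to $\xi^+_T$. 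Splicing two short spherical arcs of vanishing action from $\xi^\pm$ to $\xi^\pm_T$ yields $c(V)\leq c_T(V)+o(1)$. On the other hand $\min|\tilde x_T|=m/R\to 0$ (with $m=\min_t|x(t)|$), and a compactness argument in the Jacobi metric forces the limit curve to touch the origin; by Proposition \ref{propo:alter} it must coincide with the juxtaposed double-homothetic motion, whose action equals $c_0(V):=4\sqrt{2V_{\min}}/(2-\alpha)$. Hence $c_T(V)\to c_0(V)$, and combined with the trivial $c(V)\leq c_0(V)$ we conclude $c(V)=c_0(V)$.

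To show $V\in\partial\setout$, I would construct an explicit family of perturbations entering $\setout$. Fix $\phi\in\cont^2(\sphere)$ nonnegative, vanishing Morse non-degenerately at $\xi^\pm$, and strictly positive elsewhere, and set
\[
V_\epsilon(y) := V(y) - \epsilon\,\phi(y/|y|)\,|y|^{-\alpha}.
\]
For small $\epsilon>0$, $V_\epsilon\in\Poh$ with $V_{\min}$ preserved, so that $c_0(V_\epsilon)=c_0(V)$, and $V_\epsilon\to V$ as $\epsilon\to 0$. As a competitor for $c(V_\epsilon)$, take the path $y_{T,\epsilon}$ obtained by splicing $\tilde x_T$ at its endpoints to short spherical arcs joining $\xi^\pm$ to $\xi^\pm_T$. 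The change of variables $t=s\,R^{(\alpha+2)/2}$ yields
\[
\action_{V_\epsilon}(y_{T,\epsilon}) = c_T(V) + o(1) - \epsilon\,R^{(\alpha-2)/2}\!\int_{-T}^{T}\!\phi(x/|x|)\,|x|^{-\alpha}\,\dt,
\]
and the integral tends to a strictly positive finite limit by the polynomial angular decay of parabolic orbits, controlled by the Morse non-degeneracy at $\xi^\pm$. The delicate point---and the main obstacle of the whole proof---is the quantitative comparison, for fixed small $\epsilon$ and $T$ large, between the perturbation gain of polynomial order $\epsilon\,R^{(\alpha-2)/2}$ and the polynomial loss $|c_T(V)-c_0(V)|$ arising from the endpoint mismatch $\xi^\pm_T-\xi^\pm$; both rates are dictated by the same Morse non-degeneracy, and the technical core of this step is to establish that the gain strictly dominates. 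Once that is settled, $\action_{V_\epsilon}(y_{T,\epsilon})<c_0(V_\epsilon)$, so $V_\epsilon\in\setout$, and letting $\epsilon\to 0$ yields $V\in\partial\setout$, completing the proof.
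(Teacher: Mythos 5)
Your overall architecture (approximate from $\setout$ and rescale for sufficiency; perturb $V$ to exhibit $V\in\partial\setout$ for necessity) is reasonable, and for the sufficiency direction it is essentially the paper's own argument ($1\implies 2$ of Theorem \ref{teo:equivalent}). But there are two genuine gaps. First, in the sufficiency direction you ``check'' that $x_\star/|x_\star|\to\xi^\pm$: this is not automatic. The limit of the rescaled minimizers is a priori only asymptotic to \emph{some} central configurations (Theorem \ref{teo:stime}), and ruling out drift to a different critical point of $V|_{\sphere}$ is exactly the content of the level-estimate argument in Lemma \ref{lem:Morse_not_empty}, which compares the action of a path whose angular part lingers where $V>V_{\min}$ against the cost $\tfrac{\pi}{2}\ve^{\alpha_*}|\xi^+-\xi^-|\sqrt{2V_{\max}}$ of an upper competitor. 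Without that comparison the step does not close.

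Second, and more seriously, the step you yourself identify as ``the main obstacle'' is left unproved, and it is not a routine verification: it is where most of the paper's technical machinery lives. With only the qualitative information $s(\pm T)\to\xi^\pm$, the endpoint-mismatch loss consists of (i) the action of the spherical splicing arcs, which is of order $|s(\pm T)-\xi^\pm|$ with \emph{no} factor $R^{-\alpha_*}$, and (ii) the excess $c_T(V)-c_0(V)$, controlled only by $\sqrt{V(s(\pm T))}-\sqrt{V_{\min}}$; your gain is $\epsilon\,R^{-\alpha_*}\int\phi(x/|x|)|x|^{-\alpha}\dt$ (and you must also justify that this integral converges). A priori all of these are merely $o(1)$, so the sign of the difference is undetermined. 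Making the gain dominate requires precisely the refined decay $r^{\alpha_*}(t)\,|s(t)-\xi^\pm|\to 0$ of Lemma \ref{lem:asintotiche} (a delicate ODE comparison exploiting the non-degeneracy $V(s)-V_{\min}\geq\mu|s-\xi^\pm|^2$) together with the identity $\action([a,b];x)=\alpha_*^{-1}[r\dot r]_a^b$ of Lemma \ref{lem:stima_azione}, which converts endpoint data into action levels with the correct power of $R$. The paper avoids this perturbation entirely: it proves $5\implies 1$ by increasing the homogeneity exponent $\alpha$ and invoking the strict monotonicity of $\gamma((V,\alpha),0^+)$ in $\alpha$ (Lemma \ref{lem:mono_gamma}), which yields $(V,\alpha+1/n)\in\setout$ by a clean one-line computation; the hard asymptotic estimates are instead spent on the jump classification ($\pjump$, $\vjump$) that drives the equivalences $2\Leftrightarrow3\Leftrightarrow4\Leftrightarrow5$. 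A smaller defect of your perturbation: $V_\epsilon=V-\epsilon\phi$ may exit $\Seh$ if $V$ attains $V_{\min}$ at points other than $\xi^\pm$ where $\phi>0$, since then $V_\epsilon<V_{\min}$ there.
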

\begin{remark}\label{rem:weakkam}
Of course, due to the invariance by homotheticity of the problem, such Morse minimizing parabolic trajectories always come in one-parameter families and give rise to a 2-dimensional Lagrangian submanifold having boundary corresponding to the two homothetic solutions (see Figure \ref{fig: lampadine}). In the planar case, minimal orbits can be considered in a given homotopy class of paths with values in $\RR^2\setminus\{0\}$, and the same
can be done in any non simply connected target. With this variant in mind, it is meaningful to have equal ingoing and outgoing asymptotic directions. In this case we find the aforementioned lamination of the configuration space giving rise to a solution of the Hamilton-Jacobi equation associated with \eqref{eq:dynsys}, which is $\mathcal C^1$ on the double  covering of $\RR^2\setminus\{0\}$.
\end{remark}
\begin{figure}[!t]
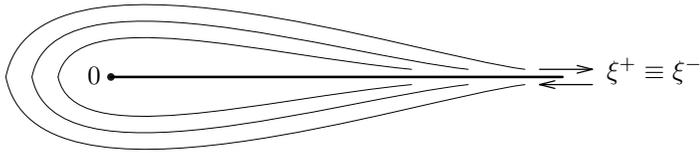

\begin{center}
\begin{mac}
\begin{texdraw}
\drawdim cm  \setunitscale 1
{
\linewd 0.03
\fcir f:0.1 r:0.05
\lvec (6 0)
\linewd 0.01
\move (-.7 0) \clvec(-.5 1)(2 .3)(4 .1)
\move (-.7 0) \clvec(-.5 -1)(2 -.3)(4 -.1)
\move (-1.05 0) \clvec(-.75 1.5)(3 .3)(4.75 .1)
\move (-1.05 0) \clvec(-.75 -1.5)(3 -.3)(4.75 -.1)
\move (-1.4 0) \clvec(-1 2)(4 .3)(5.5 .1)
\move (-1.4 0) \clvec(-1 -2)(4 -.3)(5.5 -.1)
\linewd 0.02
\move (5.7 .1)  \arrowheadtype t:V \arrowheadsize l:0.2 w:0.1 \avec (6.4 .1)
\move (6.4 -.1) \arrowheadtype t:V \arrowheadsize l:0.2 w:0.1 \avec (5.7 -.1)
\htext (6.6 -.1) {$\xi^+\equiv\xi^-$}
\htext (-.3 -.1) {$0$}
\move (0 -.5)
}
\end{texdraw}
\end{center}
\end{mac}
\caption{ a one-parameter family of planar Morse minimizing parabolic
trajectories with the same asymptotic direction at $+\infty$ and $-\infty$
and nontrivial topology. \label{fig: lampadine}}
\end{figure}
In the literature, minimal parabolic trajectories have been studied in connection
with the absence of collisions for fixed-endpoints minimizers.
More precisely, as remarked by Luz and Maderna in \cite{LuzMad2011},
the property to be collisionless for all Bolza minimizers implies the absence of
parabolic trajectories which are Morse minimal.
In particular, as they point out, this is the case for the $n$-body problem,
when no topological constraints are imposed. On the contrary,  minimal parabolic arcs (i.e.,
defined only on the half line) exist for every starting configuration,
as proved by Maderna and Venturelli in \cite{MadVen2009}. Up to our knowledge,
the present paper is the first with positive results about the existence of globally defined
parabolic minimizers.

The paper is structured as follows.
In Section \ref{sec:planar}, we give an account of the planar case $d=2$, relating minimal
parabolic trajectories with the aforementioned minima connections in Devaney's work.
Next, to construct global-in-time Morse minimizers in higher dimension, we first consider
problems on bounded intervals (Sections \ref{sec:Bolza}, \ref{sec:level_estimates}), and
then pass to the limit (Section \ref{sec:Morse}). This procedure may fail for two main reasons: sequences
of approximating trajectories may either converge to the singularity, or escape to infinity.
This naturally leads to introduce some constraint in our construction, and to define
\emph{constrained Morse minimizers}, satisfying
$\min_{t} |x(t)|=\ep$ (see Definition \ref{defi:constr_Morse_min}). The study of the interaction of
such minimizers with the constraint leads to the definition of the \emph{position-jump} $\pjump$ and of the
\emph{velocity-jump} $\vjump$ of a trajectory, see Figure \ref{fig:figureJUMPS}. Under this perspective,
the crucial fact is that such quantities do not depend on the minimizer, but only on the potential (they
are indeed related to the corresponding apsidal angle).
In Section \ref{sec:ParMin} we give the full details of the relations between parabolic minimizers,
constrained Morse minimizers, position- and velocity-jumps, and the separating interface $\Pi$,
obtaining as a byproduct the
proof of Theorem \ref{MainTheorem}. Finally, for the reader's convenience, in the appendices we collect
the proof of some rather known results for which we could not find an appropriate reference.

\subsubsection*{Notations}\label{subsec:prel}
%
Throughout the paper we will often use {polar coordinates},
that corresponds to writing $x=rs$, where
\[
r=|x|\geq0\quad\text{ and }\quad s=\frac{x}{|x|}\in\sphere.
\]
With this notation equations \eqref{eq:dynsys} and \eqref{eq:dynsys2} read as
\begin{equation}\label{eq:ELrs}
\begin{cases}
\ddot r & = -\dfrac{\alpha}{2}\dfrac{\dot r}{r}\dot r + \dfrac{2-\alpha}{2} |\dot s|^2 r\\ \vspace{-.3cm}\\
\ddot s & = -2\dfrac{\dot r}{r}\dot s + \dfrac{\nabla_T V(s)}{r^{2+\alpha}} -  |\dot s|^2 s,
\end{cases}
\qquad \qquad
\frac12 \dot r^2+ \frac12 r^2 |\dot s|^2 = \frac{V(s)}{r^\alpha};
\end{equation}
here $\nabla_T V(s)$ denotes the projection of $\nabla V(s)$
on the tangent space $T_s\sphere$:
\begin{equation}\label{eq:inpiu}
\nabla_T V(s) =
\nabla V(s)-\left(\nabla V(s)\cdot s\right)s = \nabla V(s) + \alpha V(s)s.
\end{equation}
Finally we will denote with $V_{\min}$ and $V_{\max}$ the extrema of $V|_{\sphere}$,
with
\[
\alpha_*:= \frac{2-\alpha}{2}>0,
\]
and with $C$ any (positive) constant we do not need to specify.

\section{The Planar Case}\label{sec:planar}
%
As a guideline for our higher dimensional studies, in this section
we consider the planar anisotropic Kepler problem.
Indeed, following Devaney \cite{DevInvMath1978,DevProgMath1981}, when dealing with zero-energy solutions
this problem is equivalent, after some suitable change of variables, to a bi-dimensional autonomous
dynamical system, for which explicit calculations can be carried out.

We briefly sketch Devaney's procedure. Introducing the standard polar coordinates $(r,\vartheta)$,
the potential $V:\cerchio \to \RR$ is a $2\pi$-periodic function in $\vt$;
for a clear-cut notation we define:
\[
U(\vt) := V(\cos \vt, \sin \vt) \geq U_{\min} >0, \qquad \vt \in \RR,
\]
and we then deal with the extended $-\alpha$-homogeneous potential
\[
V(r\cos \vt,r\sin\vt) = \frac{U(\vt)}{r^\alpha}, \qquad \vt \in \RR, \, r>0.
\]
Introducing the Cartesian coordinates $q_1=r\cos \vt$, $q_2=r\sin\vt$
and the momentum vector $(p_1,p_2) = (\dot q_1,\dot q_2)$, we write
$p_1=r^{-\alpha/2}z\cos \vp$, $p_2=r^{-\alpha/2}z\sin\vp$, for suitable smooth functions
$z >0$ and $\vp \in \RR$.
Under these notations, equations \eqref{eq:dynsys2} and \eqref{eq:dynsys} become
\[
z = \sqrt{2U(\vt)}
\]
and
\[
\begin{cases}
\dot r   
         = r^{-\alpha/2}z \cos (\vp-\vt) \smallskip\\
\dot \vt 
         = r^{-1-\alpha/2}z\sin (\vp-\vt) \smallskip\\
\dot z   = r^{-1-\alpha/2}U'(\vt)\sin(\vp-\vt) \smallskip\\
\dot \vp = \frac{1}{z} r^{-1-\alpha/2} \left[ U'(\vt)\cos(\vp-\vt) +\alpha U(\vt) \sin(\vp-\vt)\right].
\end{cases}
\]
The singularity at $r=0$ can be removed by a change of time scale
\[
\frac{\dt}{\dtau} = zr^{1+\alpha/2}.
\]
This allows to rewrite the system as (here ``\,$'$\,'' denotes the derivative with respect to $\tau$)
\begin{equation*}
\begin{cases}
r'   = 2r U(\vt) \cos (\vp-\vt) \\
z'   = z U'(\vt)\sin(\vp-\vt) \\
\vt' = 2 U(\vt) \sin (\vp-\vt) \\
\vp' = U'(\vt)\cos(\vp-\vt) +\alpha U(\vt) \sin(\vp-\vt),
\end{cases}
\end{equation*}
which solutions are globally defined in $\tau$.
Let us concentrate on the first equation: on one hand we have that $r$ can never vanish;
on the other hand, to ensure that $r$ is
unbounded both in the past and in the future, it is sufficient to check that
\begin{equation}\label{eq:cond_cos}
\text{$\cos(\vp-\vt)$ is bounded away from zero and positive (resp. negative)}
\end{equation}
as $\tau \to +\infty$ (resp. $-\infty$).
Furthermore this also implies that $t \to \pm\infty$ as $\tau$ does.
Keeping in mind the above condition, the study of (not necessarily minimal)
parabolic orbits reduces to the one of the planar system
\begin{equation}\label{eq:dydthetaphi}
\begin{cases}
\vt' = 2 U(\vt) \sin (\vp-\vt) \\
\vp' = U'(\vt)\cos(\vp-\vt) +\alpha U(\vt) \sin(\vp-\vt).
\end{cases}
\end{equation}
To start with, let us take into account the situation when the potential $U$ is {\em isotropic},
for instance $U(\vartheta)\equiv 1$; in this case, every $\vartheta$ is a minimal central configuration, and
the dynamical system above reads
\[
\vp' = \frac{\alpha}{2} \vt' = \alpha \sin(\vp-\vt),
\]
which critical points satisfy $\varphi = \vartheta +k\pi$, $k \in \ZZ$.
Furthermore, trajectories lie on the bundle $\varphi = (\alpha/2) \vartheta + C$, $C \in \RR$.
Recalling condition \eqref{eq:cond_cos}, we infer that parabolic solutions coincide
with heteroclinic connections departing from points on
$\varphi = \vartheta +(2k+1)\pi$ and ending on $\varphi = \vartheta +2k\pi$, for some $k\in\ZZ$.
For instance, when $k=0$, we obtain heteroclinics connecting $(\vartheta^*,\vartheta^*+\pi)$ to
$(2\pi/(2-\alpha)+\vartheta^*,2\pi/(2-\alpha)+\vartheta^*)$, for some $\vartheta^* \in \RR$.
Going back to the original dynamical system, this implies that parabolic motions exists
only when the angle between the ingoing and outgoing asymptotic directions
is $2\pi/(2-\alpha)$.
Dealing with the Kepler problem ($\alpha =1$) this angle is $2\pi$, hence
the heteroclinic between $(\vartheta^*,\vartheta^*+\pi)$ and
$(2\pi+\vartheta^*,2\pi+\vartheta^*)$
describes a parabola whose axis form an angle $\vartheta^*$ with the horizontal line.
It is worthwhile noticing that, despite connecting minimal configurations, these parabolic trajectories
are not globally minimal in the sense of Definition \ref{defi:Morse_min}
(it can be shown that they are minimal in their homotopy class).

\begin{figure}[!t]
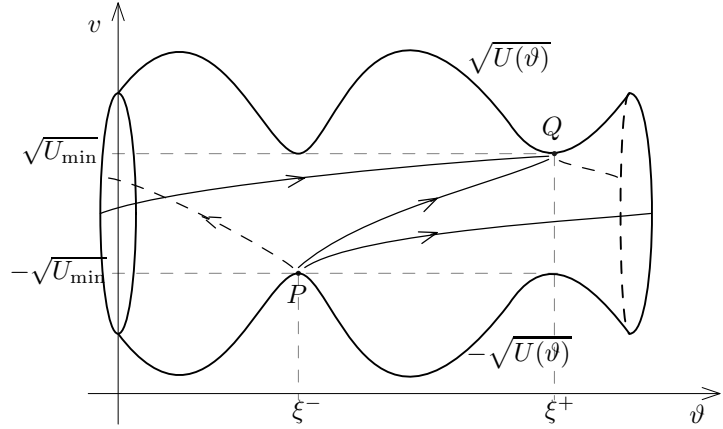

\begin{center}
\begin{mac}
\begin{texdraw}
\drawdim cm  \setunitscale .8
\linewd 0.01
\setgray 0.5 \lpatt (0.2 0.2)
\move (-0.1 -1) \lvec (7.3 -1)
\move (-0.1 1) \lvec (7.3 1)
\move (3 -1) \lvec (3 -3.1)
\move (7.25 1) \lvec (7.25 -3.1)
\lpatt()
\setgray 0
\htext (-1.8 -1.2) {$-\sqrt{U_{\min}}$}
\htext (-1.6 0.8) {$\sqrt{U_{\min}}$}
\htext (2.9 -3.5) {$\xi^-$}
\htext (7.1 -3.5) {$\xi^+$}
\linewd 0.03
\move (0 -2)
\clvec(1.5 -4)(2.4 -1)(3 -1)
\fcir f:0.1 r:.04
\clvec(3.6 -1)(4.2 -4.5)(6.5 -1.5)
\clvec(7 -.75)(7.60 -0.8)(8.5 -2)
\htext (5.8 -2.6) {$-\sqrt{U(\vartheta)}$}
\htext (2.8 -1.5) {$P$}
\move (0 2)
\clvec(1.5 4)(2.4 1)(3 1)
\clvec(3.6 1)(4.2 4.5)(6.5 1.5)
\clvec(7 .75)(7.60 0.8)(8.5 2)
\htext (5.8 2.3) {$\sqrt{U(\vartheta)}$}
\move (7.25 1) \fcir f:0.1 r:.04
\htext (7.05 1.2) {$Q$}
\move (0 0)
\lellip rx:.3 ry:2
\move (8.5 -2)
\clvec(9 -2)(9 2)(8.5 2)
\lpatt (0.2 0.2)
\clvec(8.3 2)(8.3 -2)(8.5 -2)
\lpatt ()
\linewd 0.02
\move (3 -.9)
\clvec(4 -.0)(6.25 0.5)(7.15 .9)
\move (5.2 0.2)
\arrowheadtype t:V \arrowheadsize l:0.3 w:0.2 \avec (5.3 0.25)
\move (3.1 -.9)
\clvec(4 -.3)(8 -0.1)(8.85 0)
\move (5.2 -.35)
\arrowheadtype t:V \arrowheadsize l:0.3 w:0.2 \avec (5.3 -.32)
\move (-0.3 0)
\clvec(1 .5)(6 0.9)(7.15 .95)
\move (3 .575)
\arrowheadtype t:V \arrowheadsize l:0.3 w:0.2 \avec (3.1 .59)
\move (2.9 -.9)
\lpatt (0.2 0.2) \clvec(2.7 -.7)(.4 .6)(-.3 .6)
\move (8.3 .6) \clvec(8.1 .7)(7.4 .8)(7.3 .9)
\move (1.5 -0.1)
\arrowheadtype t:V \arrowheadsize l:0.3 w:0.2 \avec (1.4 -0.05)
\lpatt ()
\linewd 0.01
\move (0 -3.5) \arrowheadtype t:V \arrowheadsize l:0.3 w:0.2 \avec (0 3.5)
\htext (-0.5 3) {$v$}
\move (-0.5 -3) \arrowheadtype t:V \arrowheadsize l:0.3 w:0.2 \avec (10 -3)
\htext (9.5 -3.5) {$\vartheta$}
\end{texdraw}
\end{mac}
\end{center}
\caption{ a saddle connection in the phase plane of system \eqref{eq:dydthetaphi} corresponds
to a heteroclinic connection between $P$ and $Q$.\label{fig:tubo}}
\end{figure}

If $U$ is not constant,
stationary points of \eqref{eq:dydthetaphi} are $(\vt^*,\vp^*)$ such that $\sin(\vp^*-\vt^*)=0$, and $U'(\vt^*)=0$.
By linearizing it is easy to see that non-degenerate minima (resp. maxima) $\vt^*$ for $U$ correspond
to critical points $(\vt^*,\vp^*)$ which are saddles (resp. sinks/sources).
Accordingly, taking into account condition \eqref{eq:cond_cos},
let us assume that the system admits a pair of saddles
$(\vt_1^*,\vp_1^*)$, $(\vt_2^*,\vp_2^*)$, such that
$\cos (\vp_1^*-\vt_1^*)=-1$ and $\cos (\vp_2^*-\vt_2^*)=1$.

Let us define the function
\[
v(\tau) = \sqrt{U(\vartheta(\tau))} \cos\left( \varphi(\tau)-\vartheta(\tau) \right),
\]
which satisfies
\[
-\sqrt{U(\vartheta(\tau))} \leq v(\tau) \leq \sqrt{U(\vartheta(\tau))}.
\]
By direct computations, we obtain that $v$ is  non-decreasing on the solutions of \eqref{eq:dydthetaphi}.
Let now $\xi^{-} = \vt_1^*$ and $\xi^{+} = \vt_2^*$ be defined as above.
A parabolic trajectory between $\xi^{\pm}$ projects, on the plane $(\vt,v)$, on
an increasing graph connecting $P=(\xi^-,-\sqrt{U_{\min}})$ and $Q=(\xi^+,\sqrt{U_{\min}})$  (we refer to Figure
\ref{fig:tubo}).

\begin{figure}[!t]
\centering
\includegraphics[width=5.5cm]{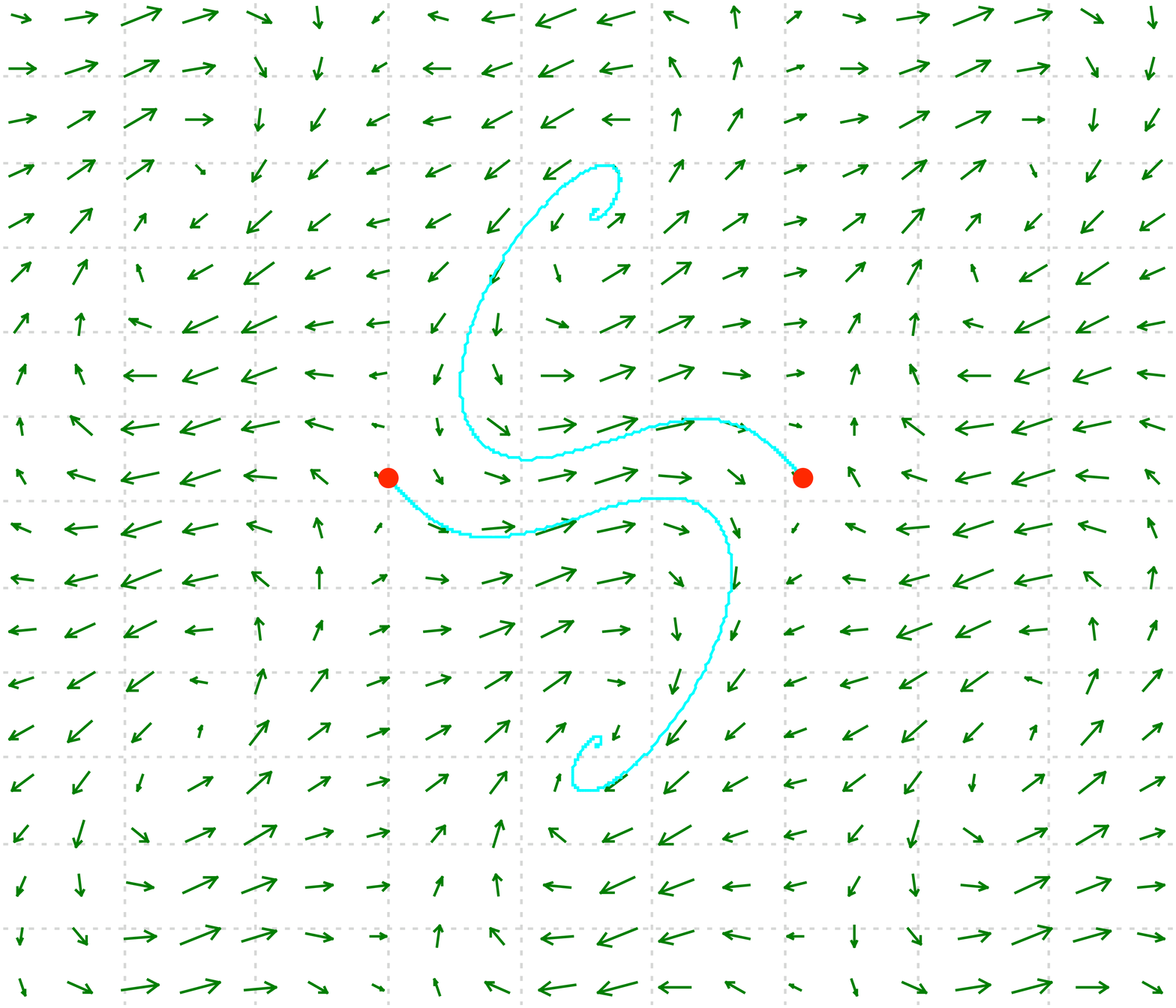}
\qquad
\includegraphics[width=5.5cm]{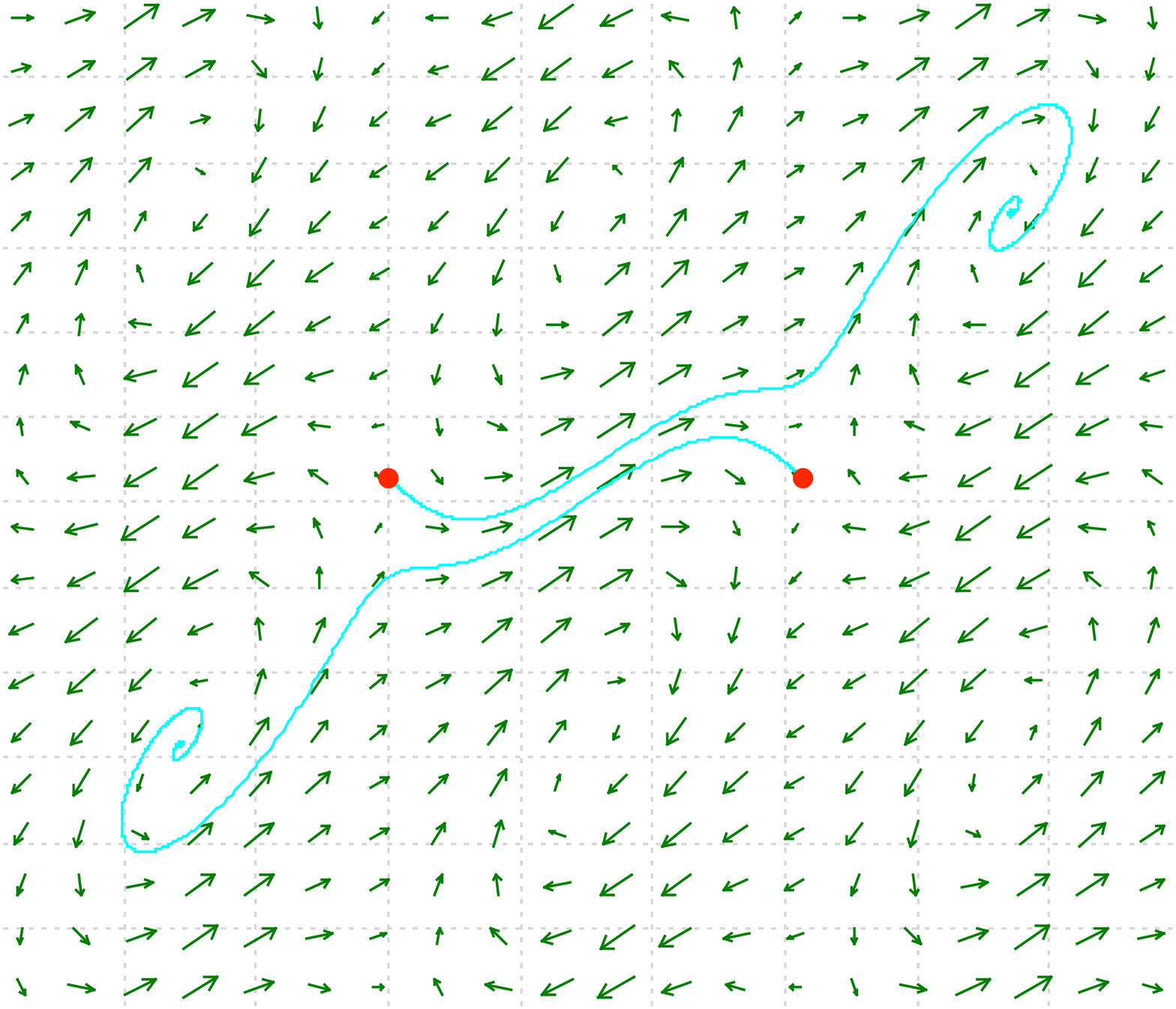}
\caption{the two pictures represent the phase portrait of the dynamical system \eqref{eq:dydthetaphi}
with $U(\vt) = 2-\cos (2\vt)$, when $\alpha = 0.5$ (at left) or $\alpha = 1$ (at right).
We focus our attention on the saddles $(0,\pi)$ and $(\pi,\pi)$ (that satisfy condition \eqref{eq:cond_cos}):
from the mutual positions of the
heteroclinic departing from $(0,\pi)$ and the one ending in $(\pi,\pi)$ we deduce that the
two vector fields are not topologically equivalent.
Using standard arguments in the theory of structural stability
(e.g. Theorem 13.6 in \cite{HK}), we infer the existence, for some $\bar\alpha\in(0.5,1)$, of a saddle connection
between $(0,\pi)$ and $(\pi,\pi)$.
\label{fig:pplane}}
\end{figure}

Generically (see also Theorem 4.10 in \cite{DevInvMath1978}),
the unstable manifold at $P$ falls directly into a sink,
while the stable manifold at $Q$ emanates from a source:
we claim that, for some values of the parameter $\alpha$, such two points can be connected.
Let us focus on the heteroclinic from $P$.
Since the derivative of $v$ can be written as
\[
v'(\tau) = (2-\alpha)\sqrt{U(\vartheta(\tau))} \left[U(\vartheta(\tau))-v^2(\tau)\right],
\]
$v$ strictly increases whenever $v(\tau) \in (-\sqrt{U(\tau)},\sqrt{U(\tau)})$;
in such monotonicity intervals we have that $\sin \left( \varphi(\tau)-\vartheta(\tau) \right) \neq 0$, hence also
$\vartheta$ is strictly monotone.
As a consequence we can read
$v$ as a function of $\vartheta$ (inverting $\vartheta(\tau)$)).
Our aim is to show that, for some $\alpha \in (0,2)$ and some $k \in \NN$,
there exists a solution of the dynamical system such that
\[
v(\xi^-) = -\sqrt{U_{\min}} \quad \text{and} \quad v(\xi^+ + 2k\pi) = \sqrt{U_{\min}}.
\]
Since
\begin{multline*}
\frac{\d v}{\dtheta} = v'(\tau) \frac{\dtau}{\dtheta}
                     = \frac{2-\alpha}{2}\frac{\sqrt{U(\vartheta(\tau))}}{U(\vt(\tau))}
                       \frac{U(\vartheta(\tau))-v^2(\tau)}{\sin (\vp-\vt)} \\
                     = \frac{2-\alpha}{2}\sqrt{U(\vartheta)-v^2(\tau(\vartheta))},
\end{multline*}
integrating on $\vartheta \in [\xi^-,\hat \vartheta]$ and  $v \in [-\sqrt{U_{\min}},\sqrt{U_{\min}}]$, we obtain
on one hand
\[
\hat \vartheta - \xi^-
\leq \frac{2}{2-\alpha}\int_{-\sqrt{U_{\min}}}^{\sqrt{U_{\min}}} \frac{\d v}{\sqrt{U_{\min}-v^2}}
=    \frac{2\pi}{2-\alpha}
\]
and on the other hand
\[
\hat \vartheta - \xi^-
\geq \frac{2}{2-\alpha}\int_{-\sqrt{U_{\min}}}^{\sqrt{U_{\min}}} \frac{\d v}{\sqrt{U_{\max}-v^2}}
=    \frac{4}{2-\alpha}\arcsin \sqrt\frac{U_{\min}}{U_{\max}}.
\]
From the first inequality we deduce that, as $\alpha$ becomes very small,
$\hat \vartheta$ does not exceed $\xi^- + \pi$;
from the second one we infer that, as $\alpha$ tends to 2,
$\hat \vartheta$ diverges to $+\infty$.
It is possible to conclude that, for any (large) $k$
there exists $\bar \alpha_k$
such that $v(\xi^-) = -\sqrt{U_{\min}}$ and $v(\xi^+ + 2k\pi) = \sqrt{U_{\min}}$,
see Figure \ref{fig:pplane}.
More results in this direction are contained in \cite{BTVpreparation}.

\section{Bolza Minimizers}\label{sec:Bolza}
Now we turn to the general case of dimension $d$.
In this section we investigate constrained fixed-endpoints problems for
lagrangians with a potential $V=(V,\alpha)$, under the assumptions that $V$ is
$-\alpha$-homogeneous and that $V|_{\sphere}$ is positive and smooth.
In particular all the results will hold if $V \in \Poh$, even though here the assumptions
about $\xi^{\pm}$ do not play any role.

Let us fix $\ve>0$, $x_1, x_2 \in \RR^d \setminus B_{\ve }(0)$, and $T>0$.
We introduce the sets of constrained paths
\[
\GRe{T}{\ve }{x_1}{x_2} :=
\left\{ x\in H^1(-T,T) :\, x(-T)=x_1, x(T)=x_2, \min_{t\in[-T,T]} |x(t)| = \ve  \right\}
\]
and their union
\[
\GReU{\ve }{x_1}{x_2} = \GReU{\ve }{x_1}{x_2}({x_1},{x_2},{\ve }) := \bigcup_{T>0} \GRe{T}{\ve }{x_1}{x_2}.
\]
This section is devoted to study the Bolza minimization problem
\begin{equation}\label{eq:minA}
m = m(x_1,x_2,\ve) := \inf_{x \in \GReU{\ve}{x_1}{x_2}} \action(x).
\end{equation}
Let us remark that, for a unified treatment, we let $x_1$ and/or $x_2$ belong to the constraint $|x|=\ve$.
To avoid degenerate situations we suppose that
$|x_1|=|x_2|=\ve$ implies $x_1 \neq x_2$, excluding the trivial case.

As we noticed in the introduction, we minimize with respect to both trajectory $x$ and time length $T$.
The reason is that such procedure will provide zero-energy motions (see Appendix \ref{app:maup}).
To exploit this property we define, for any $y \in H^1(-1,1)$, the \emph{Maupertuis' functional}
\[
J(y) = J([-1,1];y):= \int_{-1}^1 \frac12 |\dot y(t)|^2\,\dt \int_{-1}^1 V(y(t))\,\dt.
\]
%
%
%
\begin{lemma}\label{lem:legameAJ}
If $\bar x \in \GRe{T}{\ve}{x_1}{x_2}$ achieves $m$,
then $\bar y(t) := \bar x(Tt)$, $t \in [-1,1]$, achieves
\begin{equation*}\label{eq:minJ}
 \inf_{y \in \GRe{1}{\ve}{x_1}{x_2}} J(y).
\end{equation*}
On the other hand, if $\bar y \in \GRe{1}{\ve}{x_1}{x_2}$ achieves the infimum above, then
$\bar x(\tau) := \bar y(\tau/\bar T )$, $\tau \in [-\bar T,\bar T]$ achieves $m$ where
\begin{equation*}\label{eq:barT}
\bar T := \left(\frac{\int_{-1}^1 |\dot {\bar y}|^2}{2\int_{-1}^1 V(\bar y)}\right)^{1/2}.
\end{equation*}
\end{lemma}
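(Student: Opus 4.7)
The plan is to apply the standard Maupertuis reparametrization argument, relating $\action$ and $J$ through an explicit time rescaling. Given $x \in \GRe{T}{\ve}{x_1}{x_2}$, I define $y(t) := x(Tt)$ for $t\in[-1,1]$. Since $y$ and $x$ traverse the same image, the constraint $\min|y|=\ve$ is preserved and $y \in \GRe{1}{\ve}{x_1}{x_2}$. A direct change of variables, using $|\dot y(t)|^2=T^2|\dot x(Tt)|^2$, yields
\[
\int_{-1}^1 \tfrac12|\dot y|^2\dt = T\int_{-T}^T \tfrac12|\dot x|^2\dt,\qquad
\int_{-1}^1 V(y)\dt = \frac{1}{T}\int_{-T}^T V(x)\dt.
\]
Writing $K_0(y):=\int_{-1}^1\tfrac12|\dot y|^2\dt$ and $W_0(y):=\int_{-1}^1 V(y)\dt$, this gives the identity
\[
\action(x) \;=\; \frac{K_0(y)}{T} + T\,W_0(y),\qquad J(y)=K_0(y)W_0(y).
\]

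Next I minimize the right-hand side in $T>0$ for $y$ fixed. Since $V>0$ on $\sphere$ and since $y$ is non-constant (ruled out by the hypothesis $|x_1|=|x_2|=\ve\Rightarrow x_1\neq x_2$), both $K_0(y)$ and $W_0(y)$ are strictly positive. The AM-GM inequality then yields
\[
\action(x) \;\geq\; 2\sqrt{K_0(y)\,W_0(y)} \;=\; 2\sqrt{J(y)},
\]
with equality if and only if $T=\sqrt{K_0(y)/W_0(y)}$, which is precisely the value $\bar T$ in the statement. Symmetrically, starting from any $y\in\GRe{1}{\ve}{x_1}{x_2}$ and defining $x(\tau):=y(\tau/\bar T)$ on $[-\bar T,\bar T]$, the same computation shows $x\in\GRe{\bar T}{\ve}{x_1}{x_2}$ with $\action(x)=2\sqrt{J(y)}$.

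Combining these two observations gives the equivalence
\[
m \;=\; \inf_{T>0}\inf_{x\in\GRe{T}{\ve}{x_1}{x_2}} \action(x) \;=\; 2\sqrt{\,\inf_{y\in\GRe{1}{\ve}{x_1}{x_2}} J(y)\,},
\]
and shows that minimizers on the two sides correspond bijectively via the rescalings $y(t)=\bar x(Tt)$ and $\bar x(\tau)=\bar y(\tau/\bar T)$. This proves both implications of the lemma. No real obstacle arises; the only point to verify is the strict positivity of $K_0$ and $W_0$ needed to make $\bar T$ well-defined and to apply AM-GM with equality case, and this follows from the hypotheses on $V$ and on the endpoints $x_1,x_2$.
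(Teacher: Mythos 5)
Your proof is correct and is essentially the paper's argument: the authors likewise reduce to $[-1,1]$ via the linear rescaling $y(t)=x(Tt)$ and then invoke the Maupertuis computation (their Lemma \ref{lem:maup}), whose first inequality is exactly your AM--GM step $K_0/T+TW_0\geq 2\sqrt{K_0W_0}$ with equality at $T=\bar T$. You have merely carried out explicitly the optimization in $T$ that the paper delegates to the appendix, including the needed positivity of $K_0$ and $W_0$.
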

\begin{proof}
For any $T>0$, there exists a one-to-one correspondence between the sets
$H^1(-1,1)$ and $H^1(-T,T)$: $x \in H^1(-T,T)$ if and only if $y_x(t):= x(Tt) \in H^1(-1,1)$.
Taking into account this fact, the lemma follows by arguing as in the proof of Lemma \ref{lem:maup}.
\end{proof}

The reformulation of problem \eqref{eq:minA} in terms of $J$
allows to easily prove the existence of a minimizer.

\begin{lemma}\label{lem:Boltza_exists}
The infimum of $J$ on $\GRe{1}{\ve }{x_1}{x_2}$ is achieved.
\end{lemma}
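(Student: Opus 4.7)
The plan is to run the direct method of the calculus of variations. Take a minimizing sequence $y_n\in\GRe{1}{\ve}{x_1}{x_2}$ with $J(y_n)\to \inf J$; the three main tasks are (a) extracting $H^1$-compactness from the product structure of $J$, (b) checking that the constraint $\min|y|=\ve$ survives in the weak limit, and (c) establishing lower semicontinuity of $J$ at that limit.

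For (a), the constraint $|y_n(t)|\ge\ve$ gives immediately the uniform upper bound $\int V(y_n)\le 2V_{\max}/\ve^{\alpha}$. The delicate step is controlling $|y_n|$ from above, since the minimizing sequence might a priori run off to infinity, where $V$ vanishes. For this I would use the Cauchy--Schwarz consequence
\[
\int_{-1}^{1}\sqrt{V(y_n)}\,|\dot y_n|\,\dt \;\le\; \sqrt{2J(y_n)},
\]
combined with $\sqrt{V(y_n)}\ge \sqrt{V_{\min}}\,|y_n|^{-\alpha/2}$ and the function $\phi(r):=\frac{2}{2-\alpha}r^{1-\alpha/2}$, whose derivative is $r^{-\alpha/2}$; since $\alpha<2$, $\phi$ is unbounded in $r$. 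The chain rule yields
\[
\bigl|\phi(|y_n(t)|)-\phi(|x_1|)\bigr|\le \int_{-1}^{t}|y_n|^{-\alpha/2}|\dot y_n|\,\dt\le \sqrt{\frac{2(m+1)}{V_{\min}}},
\]
which forces $|y_n(t)|\le R$ for a uniform $R$. With this $L^\infty$ bound we get in turn $V(y_n)\ge V_{\min}/R^\alpha$, hence
\[
\int_{-1}^{1}|\dot y_n|^2\dt = \frac{2J(y_n)}{\int V(y_n)}\le \frac{(m+1)R^\alpha}{V_{\min}},
\]
so $(y_n)$ is bounded in $H^1(-1,1)$.

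Then I extract $y_n\rightharpoonup y^*$ weakly in $H^1$ and strongly in $C^0$ by Rellich. The endpoint conditions $y^*(\pm1)=x_{1,2}$ pass to the limit, and since $y\mapsto\min_t|y(t)|$ is $1$-Lipschitz for the uniform norm, $\min|y^*|=\lim\min|y_n|=\ve$, so $y^*\in\GRe{1}{\ve}{x_1}{x_2}$ and in particular $y^*$ stays away from the singularity of $V$. For (c), weak $L^2$-lower semicontinuity gives $\int|\dot y^*|^2\le\liminf\int|\dot y_n|^2$, while dominated convergence (justified by the uniform bounds $\ve\le|y_n|\le R$ and continuity of $V$ off the origin) yields $\int V(y_n)\to\int V(y^*)$. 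Multiplying, $J(y^*)\le\liminf J(y_n)=m$, so $y^*$ realizes the infimum.

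The only genuine obstacle in this scheme is the $L^\infty$ estimate on the minimizing sequence: neither factor of $J$ on its own is coercive against escape to infinity, and only the combination of the Cauchy--Schwarz bound on the Jacobi length $\int\sqrt{2V(y_n)}|\dot y_n|$ with the hypothesis $\alpha<2$ (which makes $\phi$ unbounded on $[0,\infty)$) pins $|y_n|$ inside a fixed ball. Everything else is a routine direct-method argument, aided by the fact that the constraint itself trivialises the upper bound on $\int V(y_n)$ and the continuity of $V$ on $\{|y|\ge\ve\}$.
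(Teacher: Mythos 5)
Your proof is correct, and it follows the same direct-method skeleton as the paper's: minimizing sequence, a lower bound on $\int_{-1}^1 V(y_n)$ yielding an $H^1$ bound, weak compactness, stability of the constraint and of the endpoints under uniform convergence, and lower semicontinuity of the product functional. The one non-routine step --- preventing the minimizing sequence from escaping to infinity, where $\int V$ could degenerate --- is handled by a genuinely different estimate. The paper argues by contradiction: if $\delta_n:=\int_{-1}^1 V(y_n)\to0$, then at some $t_n'$ homogeneity forces $|y_n(t_n')|\geq (V_{\min}/\delta_n)^{1/\alpha}$, H\"older gives $\int|\dot y_n|^2\gtrsim\delta_n^{-2/\alpha}$, and hence $J(y_n)\gtrsim\delta_n^{-(2-\alpha)/\alpha}\to+\infty$, contradicting minimality. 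You instead derive a direct a priori $L^\infty$ bound by controlling the weighted length $\int_{-1}^1|y_n|^{-\alpha/2}|\dot y_n|\dt\leq\sqrt{2J(y_n)/V_{\min}}$ and integrating it against the unbounded primitive $\phi(r)=\tfrac{2}{2-\alpha}\,r^{1-\alpha/2}$. Both mechanisms rest on exactly the hypothesis $\alpha<2$; yours is more quantitative (it produces an explicit radius $R$ and then the explicit lower bound $\int V(y_n)\geq 2V_{\min}/R^\alpha$, rather than a contradiction) and anticipates the Jacobi-length and homothetic-action estimates the paper exploits later (Lemma \ref{lem:omo1}), while the paper's version is marginally shorter. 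Your remaining observations --- the unused upper bound $\int V(y_n)\leq 2V_{\max}/\ve^\alpha$, the $1$-Lipschitz continuity of $y\mapsto\min_t|y(t)|$ for the uniform norm, and the convergence $\int V(y_n)\to\int V(y^*)$ on the compact annulus $\ve\leq|y|\leq R$ --- are correct and simply make explicit the weak-closedness and semicontinuity details that the paper calls standard.
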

\begin{proof}
Let $\suc{y}{n}\subset \GRe{1}{\ve }{x_1}{x_2}$ be a minimizing sequence.
We claim that $\int_{-1}^{1}V({y_n})$ is bounded away from zero. If this is true
then the lemma follows in a standard way: indeed, as a consequence,
$\suc{y}{n}$ is uniformly bounded and hence weakly convergent in $H^1$,
$\GRe{1}{\ve }{x_1}{x_2}$ is weakly closed,
and $J$ is weakly lower semi-continuous.
To prove the claim, let us assume by contradiction that $\delta_n:=\int_{-1}^{1}V({y_n})\to 0$;
then there exists $t_n' \in [-1,1]$
such that $V(y_n(t_n'))<\delta_n$ and then, by homogeneity,
$|y_n(t_n')| > \left(V_{\min}/\delta_n\right)^{1/\alpha}$.
By H\"older's inequality we have, as  $n$ becomes large,
\[
\begin{split}
 \int_{-1}^{1}\frac12|\dot y_n|^2
 \geq \frac14 \left( \int_{-1}^1|\dot y_n| \right)^2
 & \geq \frac14 \left| y_n(t_n')-x_1\right|^2
 \geq \frac14 \left(|y_n(t_n')| - |x_1| \right)^2\\
 & \geq \frac14 \left( \frac{V_{\min}^{1/\alpha}}{\delta_n^{1/\alpha}} - |x_1| \right)^2
 \geq C\delta_n^{-2/\alpha}.
\end{split}
\]
Hence $J(y_n) \geq C\delta_n^{-(2-\alpha)/\alpha}$ and, since $\alpha \in (0,2)$,
this contradicts the fact that $\suc{y}{n}$ is a minimizing sequence.
\end{proof}

Recalling Lemmata \ref{lem:legameAJ} and
\ref{lem:generale} we have the following result.

\begin{corollary}\label{coro:energy}
$m$ is achieved in $\Gamma$.
For every $\bar x = \bar r \bar s$ minimizer, the function $|\dot{\bar x}|^2= \dot{\bar r}^2 + \bar r^2
|\dot{\bar s}|^2 $ is continuous.
In particular \eqref{eq:dynsys2} holds for every $t$.
\end{corollary}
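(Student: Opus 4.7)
The corollary consists of three assertions that follow in sequence from the machinery already established. First I would show that $m$ is attained; then I would invoke the Maupertuis principle to obtain the zero-energy relation almost everywhere; finally I would upgrade this to an everywhere statement by continuity.

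\emph{Step 1 (existence).} By Lemma~\ref{lem:Boltza_exists} there exists $\bar y \in \GRe{1}{\ve}{x_1}{x_2}$ with $J(\bar y)=\inf_{\GRe{1}{\ve}{x_1}{x_2}} J$. Define
\[
\bar T := \left(\frac{\int_{-1}^1 |\dot{\bar y}|^2}{2\int_{-1}^1 V(\bar y)}\right)^{1/2}
\]
(well-defined since the denominator is strictly positive, as shown in the proof of Lemma~\ref{lem:Boltza_exists}), and set $\bar x(\tau):=\bar y(\tau/\bar T)$ for $\tau\in[-\bar T,\bar T]$. By the second part of Lemma~\ref{lem:legameAJ}, $\bar x \in \GRe{\bar T}{\ve}{x_1}{x_2}\subset \Gamma$ achieves $m$.

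\emph{Step 2 (zero-energy almost everywhere).} The scaling between $J$ on $[-1,1]$ and $\action$ on $[-\bar T,\bar T]$ is precisely the content of Maupertuis' principle (Lemma~\ref{lem:generale} in the appendix): a minimizer of $\action$ in $\Gamma$ whose time length is chosen to be the optimal $\bar T$ automatically satisfies the zero-energy relation
\[
\tfrac12 |\dot{\bar x}(t)|^2 = V(\bar x(t)) \qquad \text{for a.e.\ } t \in [-\bar T,\bar T].
\]
Writing $\bar x = \bar r\bar s$ with $\bar s\cdot\dot{\bar s}=0$ gives $|\dot{\bar x}|^2 = \dot{\bar r}^2 + \bar r^2 |\dot{\bar s}|^2$ a.e., hence this decomposition equals $2V(\bar x)$ a.e.

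\emph{Step 3 (continuity and pointwise identity).} Since $\bar x \in H^1(-\bar T,\bar T)$ it admits a continuous representative, which satisfies $|\bar x(t)|\geq \ve>0$ for all $t$ by definition of $\Gamma$. As $V$ is of class $\mathcal C^2$ on $\RR^d\setminus\{0\}$, the composition $V(\bar x)$ is continuous on $[-\bar T,\bar T]$. The a.e.\ equality established in Step~2 then forces $\dot{\bar r}^2 + \bar r^2 |\dot{\bar s}|^2$ to agree a.e.\ with the continuous function $2V(\bar x)$, so it has a continuous representative, and that representative satisfies \eqref{eq:dynsys2} at \emph{every} $t$.

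\emph{Main obstacle.} None of the three steps requires a new idea: existence is immediate from Lemma~\ref{lem:Boltza_exists} and the correspondence in Lemma~\ref{lem:legameAJ}, the energy identity is exactly what Maupertuis' principle is designed to give, and the promotion from a.e.\ to everywhere is purely the continuity of $V$ composed with a path staying outside the origin. The only point deserving care is verifying that the Maupertuis lemma in the appendix applies despite the presence of the unilateral constraint $\min|x|=\ve$; this is why the statement is phrased in terms of $|\dot{\bar x}|^2$ rather than of the individual velocity components, since the constraint could in principle affect regularity of $\dot{\bar x}$ at contact points but not the kinetic energy, which is controlled by the first integral.
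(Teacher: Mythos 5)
Your proposal is correct and follows exactly the route the paper intends: the paper gives no separate proof, simply citing Lemma~\ref{lem:legameAJ} (together with Lemma~\ref{lem:Boltza_exists}) for attainment and Corollary~\ref{lem:generale} (Maupertuis) for the a.e.\ energy identity, with the upgrade to continuity and to a pointwise identity coming from the continuity of $V(\bar x)$ along a path bounded away from the origin. Your three steps fill in precisely this argument, including the relevant observation that $\Gamma$ is closed under reparametrization so the appendix corollary applies despite the constraint $\min|x|=\ve$.
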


%

\begin{corollary}\label{cor:convex}
Let $\bar x\in\Gamma_T$ achieve $m$.
Then there exist $t_* \leq t_{**}$ such that
\begin{itemize}
\item $\bar r(t) = \ve$ if and only if $t \in [t_{*},t_{**}]$;
\item for every $t \in (-T,t_*)$ we have $\dot{\bar r}(t)< 0$ (and \eqref{eq:dynsys}
holds);
\item for every $t \in (t_{**},T)$ we have $\dot{\bar r}(t) > 0$ (and \eqref{eq:dynsys}
holds).
\end{itemize}
\end{corollary}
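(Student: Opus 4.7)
The plan is to analyze the structure of the contact set $C := \{t \in [-T,T] : \bar r(t) = \ve\}$ and its complement $F := \{t \in [-T,T] : \bar r(t) > \ve\}$. The set $C$ is nonempty (since $\min \bar r = \ve$ by definition of $\Gamma_T^\ve$) and closed by continuity, while $F$ is open. A standard variational argument gives that $\bar x$ satisfies the Euler--Lagrange equation \eqref{eq:dynsys} on $F$: any $\cont^\infty_c$ perturbation with support in a compact subset of $F$ keeps the path admissible in $\Gamma_T^\ve$, so the usual first-variation calculation applies. Combined with Corollary \ref{coro:energy}, this gives both \eqref{eq:dynsys} and the zero-energy relation on $F$, hence the full system \eqref{eq:ELrs}.

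The crucial observation is that $\bar r$ can have no local maximum in $F$. Suppose $t_0 \in F$ with $\dot{\bar r}(t_0)=0$. From the zero-energy identity in \eqref{eq:ELrs} at $t_0$,
\[
\tfrac12 \bar r(t_0)^2|\dot{\bar s}(t_0)|^2 = \frac{V(\bar s(t_0))}{\bar r(t_0)^\alpha};
\]
since $V|_{\sphere} \geq V_{\min} > 0$ and $\bar r(t_0)>\ve>0$, this forces $|\dot{\bar s}(t_0)|^2>0$. The radial equation in \eqref{eq:ELrs} then yields
\[
\ddot{\bar r}(t_0) = \alpha_* \bar r(t_0)|\dot{\bar s}(t_0)|^2 > 0,
\]
so every critical point of $\bar r$ in $F$ is a strict local minimum; in particular $\bar r$ has no local maximum in $F$.

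From here the geometric consequences are immediate. If a connected component $(a,b)$ of $F$ were contained in the open interval $(-T,T)$, then $\bar r(a)=\bar r(b)=\ve$ and $\bar r>\ve$ inside would force an interior maximum of $\bar r$ in $(a,b)\subset F$, a contradiction. Hence $F$ has at most two connected components, each containing one of the endpoints $\pm T$, and $C=[t_*,t_{**}]$ is a single (possibly degenerate) closed interval. To obtain strict monotonicity on $(-T,t_*)$, I would argue by contradiction: a critical point $t_0\in(-T,t_*)$ would be a strict local minimum by the previous step, forcing $\bar r$ to rise above $\bar r(t_0)$ immediately after $t_0$; then, in order to reach $\bar r(t_*)=\ve<\bar r(t_0)$, $\bar r$ would need an intermediate local maximum in $F$, which is impossible. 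Therefore $\dot{\bar r}$ does not vanish on $(-T,t_*)$; by continuity and the boundary condition $\bar r(t_*)=\ve < \bar r(t)$ for $t<t_*$, we conclude $\dot{\bar r}<0$ on $(-T,t_*)$. The same argument, time-reversed, gives $\dot{\bar r}>0$ on $(t_{**},T)$. The main delicacy lies in Step 2: everything rests on the sign of $\ddot{\bar r}$ at critical points of $\bar r$, which in turn relies on strict positivity of $V|_\sphere$ together with the zero-energy relation to rule out $|\dot{\bar s}|=0$.
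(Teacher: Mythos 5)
Your proof is correct, and it rests on the same two ingredients as the paper's (the Euler--Lagrange equation on the free set plus conservation of energy), but it packages them differently. The paper invokes the Lagrange--Jacobi identity
\[
\frac{\d^2}{\dt^2}|\bar x(t)|^2 = 2(2-\alpha)V(\bar x(t))>0,
\]
so that $|\bar x|^2$ is \emph{strictly convex} on every interval where $|\bar x|>\ve$; this kills in one stroke both the possibility of two separated contact times (a convex function cannot exceed its boundary values) and the monotonicity on $(-T,t_*)$ and $(t_{**},T)$ (the derivative $2\bar r\dot{\bar r}$ is strictly increasing, so it can vanish at most once and must have the right sign to reach the constraint). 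You instead evaluate the radial equation only at critical points of $\bar r$, use the energy relation to force $|\dot{\bar s}|\neq0$ there, conclude that every interior critical point is a strict local minimum, and then run a continuity/``no interior maximum'' argument to sort out the components of the free set and the sign of $\dot{\bar r}$. The two computations are essentially the same one (your $\ddot{\bar r}(t_0)=\alpha_*\bar r(t_0)|\dot{\bar s}(t_0)|^2$ is the Lagrange--Jacobi identity read at a radial critical point), but the global convexity statement buys a shorter deduction, whereas your route requires the extra topological bookkeeping about components of $F$ and intermediate maxima --- which you carry out correctly. One small merit of your write-up is that it makes explicit where the Euler--Lagrange equation on the free set comes from (compactly supported perturbations preserve admissibility in $\Gamma_T$), a point the paper leaves implicit.
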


\begin{proof}
On every interval
$(a,b) \subset (-T,T)$ with $|\bar x| > \ve$,
$\bar x$ satisfies the Euler-Lagrange equation, which implies the Lagrange-Jacobi identity
\begin{equation}\label{LJ}
\frac{\d^2}{\dt^2}|\bar x(t)|^2 = 2(2-\alpha)V(\bar x(t)).
\end{equation}
Therefore $|\bar x(t)|^2$ is a convex function;
in particular this implies that if there exist $t_1 < t_2$, such that
$|\bar x(t_1)| =  |\bar x(t_2)| =  \ve $, then $|\bar x(t)| =  \ve $ for every
$t \in [t_1,t_2]$, and the corollary follows.
\end{proof}

\begin{lemma}\label{lem:polarEL}
Let $\bar x = \bar r \bar s$ and $t_{*}<t_{**}$ be as above.
Then
\[
\ddot{\bar x}(t) = \nabla_T V(\bar x(t))-\frac{1}{\ve^2}|\dot{\bar x}(t)|^2 \bar x(t),
\quad \text{for every } t \in (t_{*},t_{**})
\]
(here $\nabla_T V$ denotes the tangential part of $\nabla V$, defined in equation \eqref{eq:inpiu}).
\end{lemma}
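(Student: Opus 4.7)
The statement is the constrained Euler--Lagrange equation for $\action$ restricted to the submanifold $\{|x|=\ve\}$, valid on the interval $(t_*,t_{**})$ where $\bar x$ is confined to this sphere by Corollary \ref{cor:convex}. My plan is to (a) derive the weak form of this constrained equation by testing against tangential variations, (b) upgrade it to a pointwise identity by a standard regularity argument, and (c) identify the resulting Lagrange multiplier using Euler's identity for $-\alpha$-homogeneous potentials.

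First I would fix a smooth $\eta$ with compact support in $(t_*,t_{**})$ satisfying $\eta(t)\cdot\bar x(t)\equiv 0$, extend it by zero outside, and set $x_h:=\bar x+h\eta$. Since $\eta\perp\bar x$ and $|\bar x|\equiv\ve$ on $\mathrm{supp}\,\eta$, we get $|x_h|^2=\ve^2+h^2|\eta|^2\geq\ve^2$ there; outside $\mathrm{supp}\,\eta$ we have $x_h=\bar x$, with $|\bar x|>\ve$ outside $[t_*,t_{**}]$ by Corollary \ref{cor:convex}, while $|\bar x|=\ve$ at the boundary of $\mathrm{supp}\,\eta$. Hence $\min|x_h|=\ve$, so $x_h\in\Gamma_T$ for small $|h|$, and both signs of $h$ are admissible. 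Vanishing of the first variation then yields
\[
\int_{t_*}^{t_{**}}\bigl(\dot{\bar x}\cdot\dot\eta+\nabla V(\bar x)\cdot\eta\bigr)\,\dt=0
\]
for every admissible $\eta$. A standard bootstrap, relying on the $\cont^2$ regularity of $V$ on the compact set $\{|x|=\ve\}$ and on the continuity of $|\dot{\bar x}|^2$ from Corollary \ref{coro:energy}, upgrades $\bar x$ to $\cont^2((t_*,t_{**}))$; integration by parts and a pointwise fundamental lemma then provide a scalar $\mu(t)$ such that $\ddot{\bar x}=\nabla V(\bar x)-\mu\bar x$ on $(t_*,t_{**})$.

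To identify $\mu$, differentiate $|\bar x|^2\equiv\ve^2$ twice to get $\bar x\cdot\ddot{\bar x}=-|\dot{\bar x}|^2$, and invoke Euler's identity $\bar x\cdot\nabla V(\bar x)=-\alpha V(\bar x)$. Dotting the equation above with $\bar x$ then yields $\mu\ve^2=|\dot{\bar x}|^2-\alpha V(\bar x)$, so
\[
\ddot{\bar x}=\nabla V(\bar x)+\frac{\alpha V(\bar x)}{\ve^2}\bar x-\frac{|\dot{\bar x}|^2}{\ve^2}\bar x,
\]
and the first two terms on the right-hand side combine to $\nabla_T V(\bar x)$ via the natural extension of \eqref{eq:inpiu} from the unit sphere to the sphere of radius $\ve$, completing the proof.

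The only genuinely delicate point is the regularity upgrade from the $H^1$-minimizer to $\cont^2$ on the open interval $(t_*,t_{**})$, which is needed in order to pass from the weak identity to a pointwise equation; the remainder of the argument is essentially formal. This regularity is however standard because $\bar x$ remains in a compact subset of $\RR^d\setminus\{0\}$ on which $V$ is $\cont^2$, and the constrained equation may be read as a smooth ODE on the submanifold $\{|x|=\ve\}$.
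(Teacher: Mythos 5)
Your proposal is correct and follows essentially the same route as the paper: the paper likewise invokes the Lagrange multiplier rule for the pointwise constraint $|x(t)|=\ve$ on $(t_*,t_{**})$ and identifies the multiplier by dotting the equation with $\bar x$, using $\bar x\cdot\dot{\bar x}=0$ and $\bar x\cdot\ddot{\bar x}=-|\dot{\bar x}|^2$. You merely spell out in more detail what the paper leaves implicit (the admissibility of the two-sided tangential variations and the regularity upgrade from the weak identity), and both identifications of the multiplier coincide via \eqref{eq:inpiu}.
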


\begin{proof}
By definition, $\bar x|_{(t_{*},t_{**})}$
minimizes $\action$ with the pointwise constraint$|x(t)|=\ve$.
Applying Lagrange multipliers rule we obtain that
\[
\ddot{\bar x}(t) = \nabla V(\bar x(t)) +\lambda(t)\bar x(t), \quad \text{for every } t \in (t_{*},t_{**}).
\]
We can compute $\lambda$ multiplying by $\bar x(t)$, and recalling that, since
$\bar x(t) \cdot \bar x(t) = \ve^2$, then $\bar x(t) \cdot \dot{\bar x}(t) = 0$ and
$\bar x(t) \cdot \ddot{\bar x}(t) = -|\dot{\bar x}(t)|^2$.
\end{proof}

From the previous discussion it follows that a minimizer $\bar x$ may be not regular
only in $t_{*}$ and $t_{**}$. Our last aim is to study the behavior of $\bar x$ in these points.

\begin{proposition}\label{propo:constraint}
Let $\bar x = \bar r \bar s$ achieve $m$, and $t_{*}$, $t_{**}$ be
defined as in Corollary \ref{cor:convex}.
Then one of the following three situations occurs:
\begin{enumerate}
\item[(a)] $t_{*} < t_{**}$ and $\bar x \in \cont^1(-\bar T, \bar T)$;
\item[(b)] $t_{*} = t_{**}$ and $\bar x \in \cont^1(-\bar T, \bar T)$;
\item[(c)] $t_{*} = t_{**}$ and $\dot{\bar x}(t_*^-) \neq \dot{\bar x}(t_*^+)$; in such a case
$\bar x$ undergoes a radial reflection, that is
\[
\dot{\bar r}(t_*^-)= -\dot{\bar r}(t_*^+) \neq 0
\quad \text{and} \quad
\dot{\bar s}(t_*^-)= \dot{\bar s}(t_*^+).
\]
\end{enumerate}
\end{proposition}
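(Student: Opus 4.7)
The plan is to combine three ingredients at the transition point $t_*$ (the analysis at $t_{**}$ is identical by time-reversal): continuity of the kinetic energy $|\dot{\bar x}|^2$, the one-sided sign of $\dot{\bar r}$, and a variational identity forcing continuity of the tangential component of $\dot{\bar x}$. I will describe the argument at $t_*$.

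First, from Corollary \ref{coro:energy} the function $|\dot{\bar x}|^2 = \dot{\bar r}^2 + \bar r^2|\dot{\bar s}|^2$ is continuous on $(-\bar T,\bar T)$, so its one-sided limits at $t_*$ coincide. From Corollary \ref{cor:convex} we already have $\dot{\bar r}(t_*^-)\leq 0$; moreover, if $t_* < t_{**}$ then $\bar r\equiv\ve$ on $[t_*, t_{**}]$ and hence $\dot{\bar r}(t_*^+)=0$, whereas if $t_* = t_{**}$ then $\dot{\bar r}(t_*^+)\geq 0$.

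The delicate step is proving the tangential continuity $\dot{\bar s}(t_*^-) = \dot{\bar s}(t_*^+)$. For this I would test $\bar x$ against variations $\bar x + \delta\eta$, with $\eta\in\cont^\infty_c((-\bar T,\bar T);\RR^d)$ supported in a small neighborhood of $t_*$ and such that $\eta(t_*)$ lies in the tangent space $T_{\bar x(t_*)}\partial B_\ve$. A short check shows that such a variation preserves the constraint $\min|x|=\ve$ to leading order (if necessary one rescales by $\ve/\min|\bar x + \delta\eta|$, a $1+O(\delta^2)$ factor that does not affect the first variation). Using that $\bar x$ solves the free Euler--Lagrange equation on $(-\bar T, t_*)$ and, according to the case, either the free equation again on $(t_*,\bar T)$ (cases (b)--(c)) or the constrained equation of Lemma \ref{lem:polarEL} on $(t_*, t_{**})$ (case (a))---whose additional radial multiplier is annihilated by the tangent test vector---integration by parts on each side reduces the first-variation identity to the boundary jump
\[
\bigl(\dot{\bar x}(t_*^+)-\dot{\bar x}(t_*^-)\bigr)\cdot\eta(t_*)=0
\]
for every $\eta(t_*)\in T_{\bar x(t_*)}\partial B_\ve$. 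Hence $\dot{\bar x}(t_*^+)-\dot{\bar x}(t_*^-)$ is purely radial, which is the desired continuity of $\dot{\bar s}$.

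Combining tangential continuity with continuity of $|\dot{\bar x}|^2$ (and with $\bar r(t_*)=\ve$) yields $\dot{\bar r}(t_*^-)^2 = \dot{\bar r}(t_*^+)^2$. In case (a) this gives $\dot{\bar r}(t_*^-)=0$, so the junction at $t_*$ is $\cont^1$, and the same reasoning at $t_{**}$ completes the claim. In the case $t_*=t_{**}$ the sign constraints force $\dot{\bar r}(t_*^-)=-\dot{\bar r}(t_*^+)\leq 0$: when the common absolute value is zero we get (b), otherwise (c). The main obstacle I expect is the variational step: producing an admissible family inside $\Gamma$ (recall the constraint is the equality $\min|x|=\ve$, not an inequality) and checking that the only surviving first-order contribution is the boundary term at $t_*$, without being polluted by the radial Lagrange multiplier coming from Lemma \ref{lem:polarEL} in case (a).
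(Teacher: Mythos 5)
Your argument is sound in outline and reaches the correct trichotomy, but it takes a genuinely different route from the paper. The paper avoids first-variation/corner-condition computations altogether: it reflects the outgoing branch through the Kelvin transform $K(x)=x/|x|^2$, observes that the glued path $\tilde x$ minimizes a \emph{free} (unconstrained) functional $\action^K$ whose Lagrangian is merely Lipschitz in $x$, invokes the $\cont^1$-regularity of minimizers of such functionals (Clarke--Vinter), and then reads off the three cases from the fact that $K'(\bar x(t_*))$ is the reflection across the hyperplane orthogonal to $\bar x(t_*)$; continuity of the energy and the sign of $\dot{\bar r}$ play no role there. Your route (energy continuity from Corollary \ref{coro:energy}, a constrained Weierstrass--Erdmann corner condition forcing the velocity jump to be purely radial, and the sign/convexity information from Corollary \ref{cor:convex}) is more elementary and makes it transparent \emph{why} the discontinuity is a radial reflection, at the price of having to justify the admissibility of the variations --- precisely the obstacle you flag.

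On that point there are two concrete defects to repair. First, your patch for the equality constraint $\min|x|=\ve$ --- rescaling the whole path by $\ve/\min|\bar x+\delta\eta|$ --- is not admissible: it moves the endpoints $x_1,x_2$ out of the boundary-condition class $\Gamma$. Replace it by a \emph{local} radial correction of size $O(\delta^2)$ supported near the contact set (an intermediate-value argument in the size of the correction restores $\min|x|=\ve$ exactly); since its $\cont^1$-norm is $O(\delta^2)$ with fixed support, it contributes nothing to the first variation. Second, in case (a) it is not enough to require $\eta(t_*)\in T_{\bar x(t_*)}\partial B_\ve$: integrating by parts on $(t_*,t_{**})$ leaves the multiplier term $\int_{t_*}^{t_{**}}\lambda(t)\,\bar x(t)\cdot\eta(t)\dt$ from Lemma \ref{lem:polarEL}, which is annihilated only if $\eta(t)\perp\bar x(t)$ on all of $[t_*,t_{**}]\cap\mathrm{supp}\,\eta$ (and this same tangency along the whole contact arc is what keeps the perturbed path admissible there, since $\bar x\cdot\eta$ need not have a sign away from $t_*$). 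With these two repairs the corner condition, combined with the continuity of $|\dot{\bar x}|^2$ and with $\dot{\bar r}(t_*^-)\leq 0\leq\dot{\bar r}(t_{**}^+)$, yields (a)--(c) exactly as you describe.
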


\begin{proof}
We prove the proposition in the case $\ve =1$; the general one follows straightforwardly.
We recall the definition of the \emph{Kelvin transform}:
\[
K \colon   \RR^d \setminus \{0\} \rightarrow \RR^d \setminus \{0\},
\qquad
K(x) := \frac{x}{|x|^2}.
\]
We have that $K$ is a conformal map, $\mathrm{Fix}(K) = \sphere$,
$K^{-1}=K$,
\begin{equation*}\label{eq:K'}
\left[ K'(x)\right]_{ij} = \frac{1}{|x|^2}\left( \delta_{ij} -\frac{2x_ix_j}{|x|^2} \right)
\quad\text{ and }\quad
K'(x)^{T}\,K'(x)
= \frac{1}{|x|^4}I_d.
\end{equation*}
Hence, whenever $x \in \sphere$ and $y \in \RR^d$ we have that
\[
K'(x)y = y-2(x \cdot y)x;
\]
this means that $K'(x)$ is the reflection matrix with respect to the hyperplane orthogonal to $x$.

Let $[-\bar T,\bar T]$ be the definition interval of $\bar x$;
let $\tilde x \in H^1\left(-\bar T,\bar T\right)$, be the path
\[
\tilde x (t) := \begin{cases}
									\bar x(t),  & \text{if } t \in [-\bar T,t_{*}] \\
									K(\bar x(t)),  & \text{if } t \in (t_{*},\bar T],
								\end{cases}.
\]
Using the homogeneity of $V$ we obtain
\[
\begin{split}
\action(\bar x) &
  = \int_{-\bar T}^{t_{*}} \left[\frac12 |\dot{\tilde x}|^2 + V(\tilde x)\right]
   + \int_{t_{*}}^{\bar T}\left[ \frac12 \left( K'(\tilde x)^{T}\,K'(\tilde x)\,\dot{\tilde x} \right)\cdot \dot{\tilde x}  +
    V\left(K(\tilde x)\right)\right] \\
 & = \int_{-\bar T}^{\bar T} \left[ \frac12 \max \left\{ 1,\frac{1}{|\tilde x|^4} \right\}|\dot{\tilde x}|^2
   + \max \left\{ 1,\frac{1}{|\tilde x|^{2\alpha}}\right\} V(\tilde x) \right];
\end{split}
\]
The function $\tilde x$ is then a minimizer for
\[
\action^K(x) = \int_{-\bar T}^{\bar T} \lagr^K(\dot x,x),
\]
\[
{\lagr}^K(\dot x,x):= \frac12 \max \left\{ 1,\frac{1}{|x|^4} \right\}|\dot{x}|^2
+ \max \left\{ 1,\frac{1}{|x|^{2\alpha}}\right\} V(x),
\]
on the set
\(
\left\{x \in H^1(-\bar T,\bar T): x(-\bar T) = \tilde x(-\bar T), x(\bar T) = \tilde x(\bar T) \right\}
\),
without any other constraint.
Since ${\lagr}^K$ is Lipschitz continuous with respect to $x$,
one can prove, by standard arguments in the Calculus of Variations,
that $\tilde x\in\cont^1(-\bar T,\bar T)$ (see for instance \cite{CV1985}).

We now go back to the path $\bar x$.
From Corollary \ref{cor:convex} we deduce that only two different situations can occur:
in the first case $\{t: |\bar x(t)|= |\tilde x(t)|=1\} = \{t_*\}$,
in the second one $\{t: |\bar x(t)|= |\tilde x(t)|=1\} = [t_*,t_{**}]$, with $t_*<t_{**}$.

Let us focus on the first situation; being $\tilde x$ of class $\cont^1$ we have
\[
\dot{\bar x}(t_*^-) = \dot{\tilde x}(t_*^-) = \dot{\tilde x}(t_*)
\]
and
\[
\dot{\bar x}(t_*^+) = K'(\bar x(t_*))\dot{\tilde x}(t_*^+)
                    = K'(\bar x(t_*))\dot{\tilde x}(t_*)
                    = K'(\bar x(t_*))\dot{\bar x}(t_*^-).
\]
As previously remarked, $K'(\bar x(t_*))$ is the reflection matrix with respect
to the hyperplane orthogonal to $\bar x(t_*)$, hence
if $\dot{\bar x}(t_*^-) \cdot {\bar x}(t_*) = 0$ then
$K'(\bar x(t_*))\dot{\bar x}(t_*^-) = \dot{\bar x}(t_*^-)$, $\dot{\bar x}(t_*^+) = \dot{\bar x}(t_*^-)$
and $\bar x \in \cont^1(-\bar T,\bar T)$ (case (b));
otherwise if $\dot{\bar x}(t_*^+) \cdot {\bar x}(t_*) \neq 0$
then $0 < \dot{\bar x}(t_*^+) \cdot {\bar x}(t_*) = -\dot{\bar x}(t_*^-) \cdot {\bar x}(t_*)$.
In this case we can deduce the radial reflection of case (c); indeed,
since $\bar r(t_*)=1$ and $\bar x\cdot \dot {\bar x} = \bar r \dot {\bar r}$, we have
\[
\dot{\bar r}(t_*^+) = \dot{\bar r}(t_*^+) {\bar r}(t_*)=
\dot{\bar x}(t_*^+) \cdot {\bar x}(t_*)= -\dot{\bar x}(t_*^-) \cdot {\bar x}(t_*)
= -\dot{\bar r}(t_*^-),
\]
while the component of the velocity orthogonal to ${\bar x}(t_*)$
is conserved, that is:
\[
\begin{split}
\dot{\bar s}(t_*^+)
&= \dot{\bar x}(t_*^+) - \left( \dot{\bar x}(t_*^+) \cdot {\bar x}(t_*) \right){\bar x}(t_*)\\
&= K'(\bar x(t_*))\left[ \dot{\bar x}(t_*^+)-\left( \dot{\bar x}(t_*^+) \cdot {\bar x}(t_*) \right){\bar x}(t_*) \right]\\
&= K'(\bar x(t_*))\dot{\bar x}(t_*^+) + \left( \dot{\bar x}(t_*^+) \cdot {\bar x}(t_*) \right){\bar x}(t_*)\\
&= \dot{\bar x}(t_*^-) - \left( \dot{\bar x}(t_*^-) \cdot {\bar x}(t_*) \right){\bar x}(t_*)\
= \dot{\bar s}(t_*^-).
\end{split}
\]

Let us now consider the second situation, when the minimizer remains on $\sphere$ for a
nontrivial time interval. Since $\tilde x$ is of class $\cont^1$, both vectors $\dot{\tilde x}(t_*)$ and
$\dot{\tilde x}(t_{**})$ are tangent to $\sphere$ and, still using the properties of $K'$, we have
that $\bar x \in \cont^1(-\bar T,\bar T)$ (case (a)).
\end{proof}

The previous proposition suggests to classify minimizers with respect to the discontinuity of the
quantities $x$ and $\dot x$ on the constraint.

\begin{definition}\label{defi:type_bdd_int}
Let $x =  r  s$ be a constrained Bolza minimizer, and $t_{*}$, $t_{**}$ as above.
Then we can define the following quantities (see Figure \ref{fig:figureJUMPS}):
\[
\frac{\left|x(t_{**}) -  x(t_{*})\right|}{\ve} =  \left|s(t_{**}) -  s(t_{*})\right| =: \pjump(x)
\]
(the normalized position-jump of $x$),
\[
\frac{\dot{ x}(t_{**}^+)\cdot  x(t_{**}) - \dot{ x}(t_*^-)\cdot  x(t_*)}{\ve^{-\alpha/2}\cdot \ve} =
\ve^{\alpha/2}\left[\dot{r}(t_{**}^+) - \dot{r}(t_*^-) \right]=: \vjump(x)
\]
(the normalized  velocity-jump of $x$).\\
Then, according to Proposition \ref{propo:constraint},
\begin{itemize}
\item[(A)] when $ x$ verifies (a) then $t_*<t_{**}$,
$\pjump > 0$, $\vjump =0$ and
we say that $ x$ is \emph{position-jumping};
\item[(B)] when $ x$ verifies (b) then $t_*=t_{**}$,
$\pjump = \vjump =0$ and
we say that $ x$ is \emph{parabolic};
\item[(C)] when $ x$ verifies (c) then $t_*=t_{**}$,
$\pjump = 0$, $\vjump > 0$ and
we say that $ x$ is \emph{velocity-jumping}.
\end{itemize}
\end{definition}
\begin{figure}
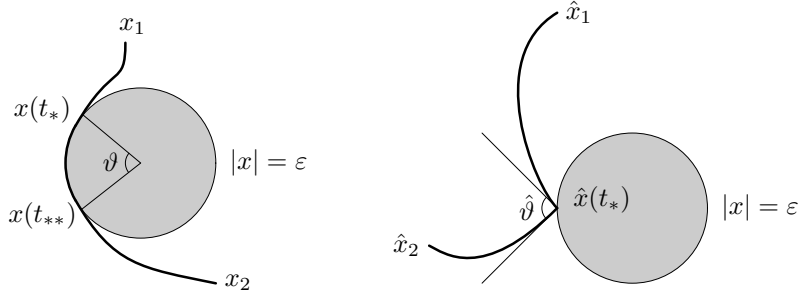

\begin{center}
\begin{mac}
\begin{tabular}{ccc}
\begin{texdraw}
\drawdim cm  \setunitscale 1
\linewd 0.03
\fcir f:0.8 r:1
\larc r:1 sd:150 ed:210
\linewd 0.01
\move (0 0) \lcir r:1
\move (0 0) \lvec (-0.77 0.64)
\move (0 0) \lvec (-0.788 -0.615)
\move (0 0) \larc r:.2 sd:140 ed:218
\linewd 0.03
\move (-0.866 0.5)\clvec(-0.366 1.366)(-.2 1)(-.2 1.6)
\linewd 0.03
\move (-0.866 -0.5)\clvec(-0.366 -1.366)(0 -1.4)(1 -1.6)
\textref h:C v:C
\htext (-.1 1.8) {$x_1$}
\htext (1.3 -1.6) {$x_2$}
\htext (-1.3 0.7) {$x(t_*)$}
\htext (-1.3 -0.7) {$x(t_{**})$}
\htext (-.4 0) {$\vartheta$}
\htext (1.7 0) {$|x|=\ve$}
\end{texdraw}

& \hspace{1cm} &

\begin{texdraw}
\drawdim cm  \setunitscale 1
\linewd 0.03
\fcir f:0.8 r:1
\linewd 0.01
\move (-1 0) \lvec (-2 1)
\move (-1 0) \lvec (-2 -1)
\move (-1 0) \larc r:.2 sd:135 ed:225
\move (0 0) \lcir r:1
\linewd 0.03
\move (-1 0)\clvec(-1.3 0.3)(-2 2)(-1 2.6)
\linewd 0.03
\move (-1 0)\clvec(-2 -1)(-2.5 -.6)(-2.7 -.5)
\textref h:C v:C
\htext (-.7 2.6) {$\hat x_1$}
\htext (-3 -.5) {$\hat x_2$}
\htext (-0.4 0.1) {$\hat x(t_*)$}
\htext (-1.4 0) {$\hat \vartheta$}
\htext (1.7 0) {$|x|=\ve$}
\end{texdraw}
\end{tabular}
\end{mac}
\end{center}
\caption{ at left, the trajectory $x$ exhibits a position-jump
$\pjump(x)= 2 \sin(\vartheta/2)$; at right, the trajectory $\hat x$
exhibits a velocity-jump $\vjump(\hat x)= 2 \cos(\hat\vartheta/2)$.\label{fig:figureJUMPS}}
\end{figure}
\begin{remark}\label{rem:hom1}
Since $|x(t^*)|=|x(t^{**})|=\ve$, and, by conservation of energy,
$|\dot x(t)| = \ve^{-\alpha/2}\sqrt{2V(s(t))}$, for any $t \in [t^*,t^{**}]$,
we can rewrite the above quantities in a more readable way.
More precisely,
\[
t_* < t_{**} \quad \implies \quad \pjump(x)=\left|\left[ \frac{x}{|x|}\right]^{t_{**}}_{t_{*}}\right|,
\]
while
\[
t_* = t_{**}
\quad \implies \quad
\frac{\vjump(x)}{\sqrt{2V(s(t_*))}} = \left|\left[ \frac{\dot x}{|\dot x|}\right]^{t^+_{*}}_{t^-_{*}}\right|,
\]
justifying the previous definition.
Furthermore, the (normalized) jumps are invariant by homothetic space-time rescalings.
In fact, it is easy to check that if $x$ achieves $m=m(x_1,x_2,\ep)$, then, for every $R>0$,
\[
z(t):=Rx(R^{-(2+\alpha)/2}t) \quad \text{achieves } m\left(Rx_1,Rx_2,R\ep\right),
\]
and
\[
\pjump(z) = \pjump(x), \qquad  \qquad \vjump(z) = \vjump(x).
\]
\end{remark}

\section{Level Estimates}\label{sec:level_estimates}

In this section we provide a number of estimates on action levels of Bolza minimizers.
The first estimates we state hold for every minimizer, regardless of its jump type.
The main idea consists in comparing their levels with the ones of homothetic solutions
(see Lemmata \ref{lem:omo1} and \ref{lem:omo2} in the appendices, and in particular the definition of the action level $\mathrm{hom}\left(r_1,r_2,\gamma\right)$).
For our future purposes, we make explicit the dependence of the estimates
on the endpoints $x_1$, $x_2$ and on the minimal radius $\ep$. For the reader's convenience, we also recall the definition of $\alpha_*:=(2-\alpha)/2$.
\begin{lemma}\label{lem:est_from_above}
Let $\ve>0$, and $x_1, x_2 \in \RR^d \setminus B_{\ve }(0)$. Then
\[
\begin{split}
 m\left(x_1,x_2,\ep\right)  \leq  \mathrm{hom}\left( \ve, |x_1|, V\left(\frac{x_1}{|x_1|} \right) \right)
                & + \mathrm{hom}\left( \ve, |x_2|, V\left(\frac{x_2}{|x_2|} \right) \right) \\
                & + \frac{\pi}{2} \ve^{\alpha_*} \left|\frac{x_2}{|x_2|} - \frac{x_1}{|x_1|} \right|\sqrt{2V_{\max}}.
\end{split}
\]
\end{lemma}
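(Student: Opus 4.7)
The plan is to exhibit an explicit test path $z\in\GReU{\ve}{x_1}{x_2}$ whose action is bounded by the right-hand side; by definition of $m$ as an infimum this gives the desired upper estimate.

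The test path is a concatenation of three pieces: (i) a homothetic motion along the ray through $x_1$ from $x_1$ to $\ve\, x_1/|x_1|$; (ii) a ``bridge'' lying entirely on the sphere $\{|x|=\ve\}$ from $\ve\, x_1/|x_1|$ to $\ve\, x_2/|x_2|$; (iii) a homothetic motion along the ray through $x_2$ from $\ve\, x_2/|x_2|$ back to $x_2$. After time-translating the pieces so that they fit on a single symmetric interval $[-T,T]$, the resulting curve lies in $\GReU{\ve}{x_1}{x_2}$, because $|z(t)|\geq \ve$ throughout and the minimum $\ve$ is attained (on the bridge).

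For the two radial pieces, I would invoke the explicit action computation for homothetic solutions (Lemmata \ref{lem:omo1}--\ref{lem:omo2} in the appendices): the infimum of the action among paths joining two collinear points $\ve\,\omega$ and $r\omega$, where $V(\omega)=V_0$, equals exactly $\mathrm{hom}(\ve,r,V_0)$. Choosing the homothetic trajectories for pieces (i) and (iii) produces the two $\mathrm{hom}$ terms in the inequality.

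For the bridge piece on $\{|x|=\ve\}$, I would take a minimal geodesic on the sphere of radius $\ve$ connecting $\ve\,x_1/|x_1|$ to $\ve\,x_2/|x_2|$, traversed at constant speed on a time interval of free length $\tau>0$. Writing $\ell:=\ve\cdot\arccos\bigl((x_1/|x_1|)\cdot(x_2/|x_2|)\bigr)$ for its arclength and using the elementary inequality $\arccos(1-\delta^2/2)\leq (\pi/2)\delta$ for $\delta\in[0,2]$, one obtains
\[
\ell\leq \frac{\pi}{2}\ve\left|\frac{x_2}{|x_2|}-\frac{x_1}{|x_1|}\right|.
\]
Since on the bridge $V(z)\leq V_{\max}/\ve^\alpha$ and $|\dot z|\equiv \ell/\tau$, the action of the bridge is bounded by $\ell^2/(2\tau) + \tau V_{\max}/\ve^\alpha$. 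Optimizing over $\tau>0$ (taking $\tau=\ell\ve^{\alpha/2}/\sqrt{2V_{\max}}$) gives the bridge action $\leq \ell\sqrt{2V_{\max}}/\ve^{\alpha/2} = \ell\,\ve^{-\alpha/2}\sqrt{2V_{\max}}$, which combined with the bound on $\ell$ yields precisely the third term $(\pi/2)\ve^{\alpha_*}|x_2/|x_2|-x_1/|x_1||\sqrt{2V_{\max}}$ since $1-\alpha/2=\alpha_*$.

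I don't anticipate a serious obstacle: the only care needed is to handle the degenerate cases $|x_i|=\ve$ (in which the corresponding homothetic piece collapses, and $\mathrm{hom}(\ve,\ve,\cdot)=0$) and to ensure the concatenation yields a curve in $H^1$ with the prescribed endpoints and the constraint $\min|z|=\ve$ actually attained; both are routine.
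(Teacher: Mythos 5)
Your proposal is correct and follows essentially the same route as the paper: juxtapose two homothetic radial arcs (controlled by Lemmata \ref{lem:omo1}--\ref{lem:omo2}) with a geodesic bridge on the sphere $\{|x|=\ve\}$, and bound the bridge using $V\leq V_{\max}/\ve^\alpha$ together with the arc-to-chord ratio $\pi/2$. The only cosmetic difference is that you bound the bridge action by traversing at constant speed and optimizing over the interval length $\tau$, whereas the paper invokes the zero-energy reparameterization of Lemma \ref{lem:maup}; these are the same Maupertuis-type estimate.
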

\begin{figure}
\begin{center}
\begin{mac}
\begin{texdraw}
\drawdim cm  \setunitscale .7
{
\linewd 0.03
\fcir f:0.8 r:1
\larc r:1 sd:150 ed:210
\larc r:1 sd:150 ed:210
\larc r:1 sd:-28 ed:30
\linewd 0.01
\lcir r:1
\linewd 0.03
\move (-0.866 0.5)\clvec(-0.366 1.366)(1.8 3.1)(7 4)
\move (-0.866 -0.5)\clvec(-0.366 -1.366)(1.8 -2.8)(6 -3)
\linewd 0.01
\setgray 0.5 \lpatt (0.2 0.2)
\move (7 4) \lvec(0 0)
\move (6 -3) \lvec(0 0)
\lpatt()
\setgray 0
\linewd 0.03
\move (7 4)\clvec(1.8 2.1)(1 1)(.86 .49)
\move (6 -3)\clvec(1.8 -1.3)(1 -1)(.89 -.45)
\linewd 0.03
\move (3.5 3.16) \arrowheadtype t:V  \arrowheadsize l:0.3 w:0.2 \avec (3.45 3.15)
\move (1 0.05) \arrowheadtype t:V  \arrowheadsize l:0.24 w:0.16 \avec (1 0)
\move (1.5 1.32) \arrowheadtype t:V  \arrowheadsize l:0.3 w:0.2 \avec (1.4 1.23)
\move (1.5 -1.07) \arrowheadtype t:V  \arrowheadsize l:0.3 w:0.2 \avec (1.6 -1.12)
\textref h:C v:C
\htext (3.5 3.5) {$x$}
\htext (7.6 4.2) {$x_1$}
\htext (6.2 -3.2) {$x_2$}
\htext (1.2 0) {$\sigma$}
\htext (1.3 1.6) {$\gamma_1$}
\htext (1.5 -1.4) {$\gamma_2$}
}
\end{texdraw}
\end{mac}
\end{center}
\caption{ \label{fig:lemma4.1} the paths $\gamma_1$, $\gamma_2$, $\sigma$ and $x$ achieve
respectively $m\left(x_1,\ve x_1/|x_1|,\ve  \right)$,
$m\left(\ve x_2/|x_2|,x_2,\ve  \right)$,
$m\left(\ve x_1/|x_1|,\ve x_2/|x_2|,\ve  \right)$ and $m\left(x_1,x_2,\ve\right)$.}
\end{figure}
\begin{proof}
To prove the required estimate we observe that, using the notation in equation \eqref{eq:minA}, there holds
\[
m\left(x_1,x_2,\ep\right)\leq m\left(x_1,\ep \frac{x_1}{|x_1|},\ep\right)
+ m\left(\ep \frac{x_1}{|x_1|},\ep \frac{x_2}{|x_2|},\ep\right)
+ m\left(\ep \frac{x_2}{|x_2|},x_2,\ep\right)
\]
(indeed, by juxtaposing paths in $\Gamma(x_1,\ep x_1/|x_1|,\ep)$,
$\Gamma(\ep x_1/|x_1|,\ep x_2/|x_2|,\ep)$, and $\Gamma(\ep x_2/|x_2|,x_2,\ep)$,
we obtain a path in $\Gamma(x_1,x_2,\ep)$, see Figure \ref{fig:lemma4.1}). But then, on one hand, Lemma \ref{lem:omo2} yields, for $i=1,2$,
\[
{m}\left(\ep \frac{x_i}{|x_i|},x_i,\ep\right) \leq \mathrm{hom}
\left( \ve, |x_i|,  V\left(\frac{x_i}{|x_i|} \right)\right).
\]
On the other hand, to estimate $m(\ep x_1/|x_1|,\ep x_2/|x_2|,\ep)$, let us assume that $\chi(t) = \ep \sigma(t)$ is \emph{any} trajectory joining the considered endpoints on
(say) $[-T,T]$, with the further property to have zero energy (as we observed, any path can be parameterized in this way, recall Lemma \ref{lem:maup}), that is
\[
\frac12 \ep^2|\dot \sigma(t)|^2 = \frac{V(\sigma(t))}{\ep^{\alpha}},
\quad \text{so that} \quad
|\dot \sigma(t)| = \ep^{-(2+\alpha)/2}\sqrt{2V(\sigma(t))}.
\]
Defining the arc-length parameter $\vartheta (t):= \int_{-T}^t |\dot \sigma(\tau)|
\dtau$, we have that
\[
\action(\chi) = \int_{-T}^T \ep^2|\dot \sigma(t)|^2 \dt
= \ep^{\alpha_*} \int_0^{\vartheta(T)} \sqrt{2V(\sigma(t(\vartheta)))}\d\vartheta.
\]
Choosing $\sigma$ to be the geodesic on the sphere between $x_1/|x_1|$ and $x_2/|x_2|$,
the lemma follows (indeed the maximal value assumed by the ratio between
the length of an arc and the one of the correspondent chord is $\pi/2$).
\end{proof}
Of course, while the estimate from above holds just for minimizers,
the one from below can be extended to every path satisfying the constraints. Such
an estimate can be improved, once one knows that the considered path crosses a zone
where the angular part of the potential is really greater than $V_{\min}$.
\begin{lemma}\label{lem:est_from_below}
Let $\bar x \in \Gamma(x_1,x_2,\ep)$, $\bar x=\bar r\bar s$, be a path
such that
\begin{itemize}
\item $x_1=\bar x(T_1)$, $x_2=\bar x(T_2)$;
\item for some $a\leq b$, $|\bar x(t)|=\ep$ on $[a,b]$.
\end{itemize}
Finally, for any $[t_1,t_2] \subset [T_1,T_2]\setminus(a,b)$, let us define the quantities
\[
\gamma:= \min_{t\in[t_1,t_2]}\left( V(\bar s(t))-V_{\min} \right)
\qquad \text{and} \qquad \bar r_{\min}:= \min_{t\in[t_1,t_2]} \bar r(t).
\]
Then
\begin{multline*}
 \action(\bar x) \geq  \mathrm{hom}\left( \ep, |x_1|, V_{\min} \right)
                 + \mathrm{hom}\left( \ep, |x_2|, V_{\min} \right)\\
                 + \sqrt{2V_{\min}}\, \ep^{\alpha_*}|\bar s(b)-\bar s(a)|
                 + \sqrt{2\gamma}\, \bar r_{\min}^{\alpha_*}|\bar s(t_2)-\bar s(t_1)|.
\end{multline*}
\end{lemma}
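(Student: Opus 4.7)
The strategy is to split $\action(\bar x)$ across the three sub-intervals $[T_1,a]$, $[a,b]$, $[b,T_2]$ determined by the constraint, and extract one summand of the target inequality from each, together with one extra summand coming from the bonus estimate on $[t_1,t_2]$. Since $(a,b)$ is open and $[t_1,t_2]$ is connected, $[t_1,t_2]$ lies entirely in either $[T_1,a]$ or $[b,T_2]$; by symmetry I may assume $[t_1,t_2]\subset[T_1,a]$.

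On the constraint interval $[a,b]$ we have $\bar r\equiv\ep$, hence $\dot{\bar r}\equiv 0$, and
\[
\int_a^b \lagr(\dot{\bar x},\bar x)\,\dt = \int_a^b \tfrac{1}{2}\ep^2|\dot{\bar s}|^2+\frac{V(\bar s)}{\ep^\alpha}\,\dt \geq \int_a^b \tfrac{1}{2}\ep^2|\dot{\bar s}|^2+\frac{V_{\min}}{\ep^\alpha}\,\dt.
\]
Pointwise AM--GM in the form $\tfrac{1}{2}A^2+B\geq A\sqrt{2B}$, applied with $A=\ep|\dot{\bar s}|$ and $B=V_{\min}/\ep^\alpha$, bounds the integrand below by $\sqrt{2V_{\min}}\,\ep^{\alpha_*}|\dot{\bar s}|$. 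Since $\int_a^b|\dot{\bar s}|\,\dt\geq|\bar s(b)-\bar s(a)|$, this yields the summand $\sqrt{2V_{\min}}\,\ep^{\alpha_*}|\bar s(b)-\bar s(a)|$.

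On $[T_1,a]$ I rewrite the Lagrangian in polar coordinates and decompose $V(\bar s)=V_{\min}+(V(\bar s)-V_{\min})$, the last term being nonnegative:
\[
\int_{T_1}^a \lagr\,\dt = \int_{T_1}^a\left(\tfrac{1}{2}\dot{\bar r}^2+\frac{V_{\min}}{\bar r^\alpha}\right)\dt + \int_{T_1}^a\left(\tfrac{1}{2}\bar r^2|\dot{\bar s}|^2+\frac{V(\bar s)-V_{\min}}{\bar r^\alpha}\right)\dt.
\]
The first integral is the action of the purely radial path $\bar r:[T_1,a]\to[\ep,|x_1|]$ for the isotropic potential of constant angular value $V_{\min}$, so Lemma \ref{lem:omo1} lower-bounds it by $\mathrm{hom}(\ep,|x_1|,V_{\min})$. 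The second integral is nonnegative everywhere; restricting it to $[t_1,t_2]$ and using $V(\bar s)-V_{\min}\geq\gamma$ together with a further AM--GM (now with $A=\bar r|\dot{\bar s}|$, $B=\gamma/\bar r^\alpha$) bounds its integrand by $\sqrt{2\gamma}\,\bar r^{\alpha_*}|\dot{\bar s}|\geq\sqrt{2\gamma}\,\bar r_{\min}^{\alpha_*}|\dot{\bar s}|$, which, integrated via the chord/arc inequality, produces the summand $\sqrt{2\gamma}\,\bar r_{\min}^{\alpha_*}|\bar s(t_2)-\bar s(t_1)|$.

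The same argument on $[b,T_2]$ (without the bonus, since $[t_1,t_2]$ is not located there) yields the remaining $\mathrm{hom}(\ep,|x_2|,V_{\min})$. Adding the four pieces delivers the claim. I expect no substantial obstacle: the only care needed is to reserve the $V_{\min}$ part of the potential for the homothetic comparison via Lemma \ref{lem:omo1}, keeping the excess $V(\bar s)-V_{\min}$ free to feed AM--GM on the bonus sub-interval; since $[t_1,t_2]$ and $[a,b]$ are essentially disjoint, no term is ever counted twice.
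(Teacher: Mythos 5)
Your proof is correct and follows essentially the same route as the paper: the identical four-term splitting of the action (two radial pieces compared with $\mathrm{hom}$ via Lemma \ref{lem:omo1}, the constraint piece on $[a,b]$, and the excess-potential piece on $[t_1,t_2]$). The only cosmetic difference is that you bound the two angular terms by a pointwise AM--GM plus the chord inequality, whereas the paper reparameterizes to the zero-energy (Maupertuis) parametrization and integrates in arc length --- these give exactly the same lower bound $2\int\sqrt{KP}$.
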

\begin{proof}
We have that, for every $[t_1,t_2] \subset [T_1,T_2]\setminus(a,b)$,
\begin{multline*}
\action(\bar x)  \geq
\int_{T_1}^{a} \left[\frac12 \dot{\bar r}^2 +
\frac{V_{\min}}{\bar r^\alpha}\right] +
\int_{b}^{T_2} \left[\frac12 \dot{\bar r}^2 +
\frac{V_{\min}}{\bar r^\alpha}\right] \\+
\int_{a}^{b} \left[\frac12 \bar \ep^2 |\dot{\bar s}|^2 +
\frac{V_{\min}}{\bar \ep^\alpha}\right] +
\int_{t_1}^{t_2} \left[\frac12 \bar r^2 |\dot{\bar s}|^2 +
\frac{V(\bar s)-V_{\min}}{\bar r^\alpha} \right] . 
\end{multline*}
The first two terms are controlled by the definition of $\mathrm{hom}$ (Lemma \ref{lem:omo1}). The remaining ones are of the same type, and they can be estimated analogously. We give the details for the last one, being the other slightly easier.
We want to estimate from below the action
\[
\action_\gamma(\bar x)=\int_{t_1}^{t_2} \left[\frac12 \bar r^2 |\dot{\bar s}|^2 +
\frac{\gamma}{\bar r^\alpha} \right].
\]
Exactly as in the proof of the previous lemma, let $\chi(t) = \rho(t) \sigma(t)$ be \emph{any} trajectory joining the considered endpoints on $[-T,T]$, with
%
%
\[
\frac12 \rho^2(t) |\dot \sigma(t)|^2 = \frac{\gamma}{\rho^\alpha(t)},
\quad \text{so that} \quad
|\dot \sigma(t)| = \rho^{-(2+\alpha)/2}(t)\sqrt{2\gamma}.
\]
Defining the arc-length parameter $\vartheta (t):=
\int_{-T}^t |\dot \sigma(\tau)| \dtau$,
we have that
\begin{multline*}
\action_\gamma(\chi) = \int_{-T}^T \rho^2(t)|\dot \sigma(t)|^2 \dt =
 \int_0^{\vartheta(T)} \rho^{\alpha_*}(t(\vartheta))\sqrt{2\gamma}\d\vartheta\\
\geq \bar r_{\min}^{\alpha_*}|\sigma(T)-\sigma(-T)|\sqrt{2\gamma}. \qedhere
\end{multline*}
\end{proof}
Now on we want to sharpen the previous level estimates making use of the definition
of $\pjump$ and $\vjump$.
To start with we observe that, on intervals where the Euler-Lagrange equation holds,
the corresponding action level can be rewritten in terms of the minimizer endpoints.

\begin{lemma}\label{lem:stima_azione}
Let $\bar x = \bar r\bar s$ be a Bolza minimizer,
and $(a,b)\subset \{t:\,\dot {\bar r}(t)<0\}\cup\{t:\,\dot {\bar r}(t)>0\}$.
Then
\[
\action([a,b];\bar x)=\frac{2}{2-\alpha}\left[\bar x(t)\cdot\dot {\bar x}(t)\right]_a^b
                = \frac{1}{\alpha_*}\left[\bar r(t)\dot {\bar r}(t)\right]_a^b.
\]
Moreover, if $\bar r(b)>\bar r(a)$, then
\begin{multline*}
\frac{\sqrt{2V_{\min}}}{\alpha_*}\,\bar r^{\alpha_*}(b) - \frac{1}{\alpha_*}\,\bar r(a)\dot {\bar r}(a^+)
          -\frac{\sqrt{2V_{\min}}}{\alpha_*}\,\frac{\bar r^{2\alpha_*}(a)}{\bar r^{\alpha_*}(b)}\\
\leq
\action([a,b];\bar x)
\leq
\frac{\sqrt{2V(\bar s(b))}}{\alpha_*}\,\bar r^{\alpha_*}(b) - \frac{1}{\alpha_*}\,\bar r(a)\dot {\bar r}(a^+)
\end{multline*}
(if $\bar r(a)>\bar r(b)$, an analogous estimate holds).
\end{lemma}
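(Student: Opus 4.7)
The plan is to prove the equality first and then derive the two-sided estimate from it. The hypothesis on $(a,b)$ forces $|\bar x|>\varepsilon$ on this interval, so by Corollary \ref{cor:convex} both the Euler--Lagrange equation $\ddot{\bar x}=\nabla V(\bar x)$ and the zero-energy relation $|\dot{\bar x}|^2=2V(\bar x)$ hold there. Combining these with the Euler identity $\bar x\cdot\nabla V(\bar x)=-\alpha V(\bar x)$ (valid since $V$ is $-\alpha$-homogeneous), one obtains
\[
\frac{d}{dt}\bigl(\bar x\cdot\dot{\bar x}\bigr) = |\dot{\bar x}|^2+\bar x\cdot\ddot{\bar x} = 2V(\bar x)-\alpha V(\bar x) = 2\alpha_* V(\bar x).
\]
Since $\lagr(\dot{\bar x},\bar x)=\tfrac12|\dot{\bar x}|^2+V(\bar x)=2V(\bar x)$ on the orbit, integration yields $\action([a,b];\bar x)=\frac{1}{\alpha_*}[\bar x\cdot\dot{\bar x}]_a^b$. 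Differentiating $|\bar s|^2\equiv 1$ gives $\bar s\cdot\dot{\bar s}=0$, hence $\bar x\cdot\dot{\bar x}=\bar r\dot{\bar r}$, which is exactly the claimed equality.

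Assume now $\bar r(b)>\bar r(a)$, so $\dot{\bar r}>0$ throughout $(a,b)$, and set $f(t):=\bar r(t)\dot{\bar r}(t)$. The equality just proved reads $\action=[f(b)-f(a)]/\alpha_*$, while the computation above gives $f'(t)=2\alpha_*V(\bar s)/\bar r^\alpha$. For the upper estimate, the zero-energy relation implies $\dot{\bar r}^2\leq 2V(\bar s)/\bar r^\alpha$, whence
\[
f(b)=\bar r(b)\dot{\bar r}(b^-)\leq \sqrt{2V(\bar s(b))}\,\bar r(b)^{\alpha_*},
\]
and the claimed upper bound on $\action$ follows.

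The lower bound is the crux of the argument. I would multiply $f$ by $f'$:
\[
\tfrac12\frac{d}{dt}f^2 = ff' = 2\alpha_* V(\bar s)\bar r^{1-\alpha}\dot{\bar r} \geq 2\alpha_* V_{\min}\bar r^{1-\alpha}\dot{\bar r},
\]
the inequality using $V(\bar s)\geq V_{\min}$ and $\dot{\bar r}>0$. Integrating on $(a,b)$ and changing variable to $r=\bar r(t)$ on the right yields
\[
f(b)^2-f(a)^2 \geq 4\alpha_* V_{\min}\int_{\bar r(a)}^{\bar r(b)} r^{1-\alpha}\dr = 2V_{\min}\bigl[\bar r(b)^{2\alpha_*}-\bar r(a)^{2\alpha_*}\bigr].
\]
Dropping $f(a)^2\geq 0$ and using $f(b)\geq 0$ gives $f(b)\geq \sqrt{2V_{\min}}\sqrt{\bar r(b)^{2\alpha_*}-\bar r(a)^{2\alpha_*}}$. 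The elementary bound $\sqrt{A}\geq A/C$, valid whenever $0\leq A\leq C^2$, applied with $A=\bar r(b)^{2\alpha_*}-\bar r(a)^{2\alpha_*}$ and $C=\bar r(b)^{\alpha_*}$, then yields
\[
f(b)\geq \sqrt{2V_{\min}}\bigl[\bar r(b)^{\alpha_*}-\bar r(a)^{2\alpha_*}/\bar r(b)^{\alpha_*}\bigr],
\]
and dividing by $\alpha_*$ and subtracting $f(a)/\alpha_*=\bar r(a)\dot{\bar r}(a^+)/\alpha_*$ gives the stated lower bound. The case $\bar r(a)>\bar r(b)$ is symmetric, reversing the role of the endpoints. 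The main obstacle is precisely this lower estimate: one must recognize that differentiating $f^2$ rather than $f$ itself turns $f'=2\alpha_*V(\bar s)/\bar r^\alpha$ into an integrand in which $\dot{\bar r}$ acts as a Jacobian, so that the substitution $r=\bar r(t)$ becomes available; the final absorption inequality is then what reshapes $\sqrt{\bar r(b)^{2\alpha_*}-\bar r(a)^{2\alpha_*}}$ into the asymmetric form required by the statement.
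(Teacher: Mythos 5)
Your proof is correct and follows essentially the same route as the paper: the identity $\tfrac{d}{dt}(\bar x\cdot\dot{\bar x})=(2-\alpha)V(\bar x)$ via Euler's identity and energy conservation, the energy bound for the upper estimate, and for the lower estimate the observation that $\tfrac12(\bar r\dot{\bar r})^2-V_{\min}\bar r^{2\alpha_*}$ is nondecreasing (you phrase it as integrating $ff'$ with the substitution $r=\bar r(t)$, the paper as monotonicity of an auxiliary function $\varphi$ — the same computation), followed by the same absorption inequality $\sqrt{A^2-B^2}\geq A-B^2/A$.
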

\begin{proof}
By Corollary \ref{cor:convex}, on $(a,b)$ the Euler-Lagrange equation holds.
Multiplying by $\bar x$, integrating by parts, and using homogeneity, we have
\[
\left[\bar x(t)\cdot \dot {\bar x}(t)\right]_a^b - \int_a^b |\dot {\bar x}|^2 =
-\alpha\int_a^b V(\bar x)).
\]
As a consequence, conservation of energy yields
\[
\left[\bar x(t)\cdot\dot {\bar x}(t)\right]_a^b =
(2-\alpha)\int_a^b \frac12|\dot {\bar x}|^2 =
(2-\alpha)\int_a^b V(\bar x)
= \alpha_*\action([a,b];\bar x).
\]

Now let us assume $\bar r(b)>\bar r(a)$, so that $\dot {\bar r}>0$ on $(a,b]$.
To prove  the estimate from above, we use the fact that,
by conservation of energy,
\[
\frac12 \dot {\bar r}^2(t) \leq \frac{V(\bar s(t))}{\bar r^\alpha(t)},
\quad
\text{ which implies }
\quad
\bar r(b) \dot {\bar r}(b) \leq \sqrt{2V(\bar s(b))}\,\bar r^{\alpha_*}(b).
\]
In order to obtain the estimate from below, we define the auxiliary function
\[
\vp(t) = \frac12 \bar r^2(t){\dot {\bar r}}^2(t) - V_{\min}\bar r^{2\alpha_*}(t).
\]
By direct computation, using \eqref{eq:ELrs}, we have
\[
\dot \vp(t) = 2\alpha_* \bar r^{1-\alpha}(t) \left[ V(\bar s(t))-V_{\min}\right]{\dot {\bar r}}(t) \geq 0,
\]
thus $\vp(t)$ is an increasing function.
In particular $\vp(b) \geq \vp(a)$, and we obtain the following chain of inequalities
\[
\begin{split}
\bar r(b)\dot {\bar r}(b) & \geq \sqrt{\bar r^2(b)\dot {\bar r}^2(b) - \bar r^2(a)\dot {\bar r}^2(a)}
              \geq \sqrt{2V_{\min}(\bar r^{2\alpha_*}(b)-\bar r^{2\alpha_*}(a))}\\&
              \geq \sqrt{2V_{\min}}\left[\bar r^{\alpha_*}(b)
                            -\bar r^{2\alpha_*}(a)\bar r^{-\alpha_*}(b)\right]
\end{split}
\]
(we used the elementary inequality $\sqrt{A^2-B^2}\geq A - (B^2/A)$).
Subtracting $\bar r(a)\dot {\bar r}(a)$ we obtain the desired estimate.
\end{proof}

\begin{lemma}\label{lem:stima_inf}
Let $\ve>0$, and $x_1$, $x_2$ be such that $|x_1|=|x_2|=R>\ve$.
If $\bar x = \bar r\bar s$ achieves $m(x_1,x_2,\ve)$ and $\vjump(\bar x)=0$,
then
\begin{multline*}
\frac{2\sqrt{2V_{\min}}}{\alpha_*}\,\left[R^{\alpha_*}
-\frac{\ve^{2\alpha_*}}{R^{\alpha_*}}\right] +\sqrt{2V_{\min}}\,\pjump(\bar x)\,\ve^{\alpha_*}
\leq \action\left(\bar x\right) \\
\leq \frac{\sqrt{2V(s_1)} + \sqrt{2V(s_2)}}{\alpha_*}\, R^{\alpha_*}+
\frac{\pi\sqrt{2V_{\max}}}{2} \pjump(\bar x)\, \ve^{\alpha_*}.
\end{multline*}
\end{lemma}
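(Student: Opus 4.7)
The plan is to decompose the action of the minimizer according to its interaction with the constraint, as described in Corollary~\ref{cor:convex}, writing $\action(\bar x)=A_-+A_0+A_+$ with $A_-=\action([-T,t_*];\bar x)$, $A_+=\action([t_{**},T];\bar x)$ and $A_0=\action([t_*,t_{**}];\bar x)$ (the latter vanishing when $t_*=t_{**}$). The assumption $\vjump(\bar x)=0$ together with Proposition~\ref{propo:constraint} forces $\bar x\in\cont^1$, and since $\bar r\equiv\ve$ on $[t_*,t_{**}]$, it follows that $\dot{\bar r}(t_*)=\dot{\bar r}(t_{**})=0$. This vanishing boundary velocity is the key input that makes Lemma~\ref{lem:stima_azione} yield clean estimates on $A_{\pm}$.

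For the lower bound, I would apply Lemma~\ref{lem:stima_azione} on $[t_{**},T]$ (and symmetrically on $[-T,t_*]$) with $\dot{\bar r}=0$ at the inner endpoint, which makes the boundary term disappear and yields
\[
A_{\pm}\ \geq\ \frac{\sqrt{2V_{\min}}}{\alpha_*}\Bigl[R^{\alpha_*}-\frac{\ve^{2\alpha_*}}{R^{\alpha_*}}\Bigr].
\]
The middle contribution follows directly from the energy identity on the constraint, $\ve|\dot{\bar s}|=\sqrt{2V(\bar s)}\,\ve^{-\alpha/2}$, which gives
\[
A_0=\ve^{\alpha_*}\int_{t_*}^{t_{**}}\sqrt{2V(\bar s)}\,|\dot{\bar s}|\,dt\ \geq\ \sqrt{2V_{\min}}\,\ve^{\alpha_*}\int_{t_*}^{t_{**}}|\dot{\bar s}|\,dt\ \geq\ \sqrt{2V_{\min}}\,\ve^{\alpha_*}\,\pjump(\bar x),
\]
since the unit-sphere arc length of $\bar s$ dominates the corresponding chord.

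The upper bound on the radial pieces follows from the companion (upper) estimate of Lemma~\ref{lem:stima_azione}, where the vanishing of $\dot{\bar r}$ at the constraint boundary collapses the boundary term and produces $A_{\pm}\leq\sqrt{2V(s_{1,2})}\,R^{\alpha_*}/\alpha_*$. The main obstacle is to bound $A_0$ from above in terms of $\pjump(\bar x)$, because for a generic curve on the sphere the arc length is not controlled by the chord. My plan is to exploit the global minimality of $\bar x$: introduce a competitor $\tilde x\in\Gamma(x_1,x_2,\ve)$ which coincides with $\bar x$ outside $[t_*,t_{**}]$ and replaces $\bar x|_{[t_*,t_{**}]}$ by the shortest great-circle arc on the $\ve$-sphere connecting $\ve\bar s(t_*)$ to $\ve\bar s(t_{**})$, reparametrized to have zero energy. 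Admissibility ($\min|\tilde x|=\ve$, continuity at the junctions) is immediate, so $\action(\bar x)\leq\action(\tilde x)$ forces $A_0$ to be no larger than the action of this replacement arc. By the same energy identity used above, that action equals $\ve^{\alpha_*}\int\sqrt{2V(\sigma)}\,|\dot\sigma|\,dt\leq\sqrt{2V_{\max}}\,\ve^{\alpha_*}\,\ell$, where $\ell$ is the geodesic length on the unit sphere. Since for a chord of length $\pjump\in[0,2]$ the shortest great-circle arc satisfies $\ell=2\arcsin(\pjump/2)\leq(\pi/2)\pjump$, the desired upper bound $A_0\leq\tfrac{\pi\sqrt{2V_{\max}}}{2}\pjump(\bar x)\,\ve^{\alpha_*}$ follows, and summing with the radial estimates gives the claim. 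The degenerate case $t_*=t_{**}$ is covered automatically, since then both $A_0$ and $\pjump(\bar x)$ vanish and only the radial estimates contribute.
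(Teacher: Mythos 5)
Your proof is correct and follows essentially the same route as the paper: the same three-piece decomposition at $t_*$, $t_{**}$, the same use of $\vjump(\bar x)=0\Rightarrow\dot{\bar r}(t_*)=\dot{\bar r}(t_{**})=0$ to kill the boundary terms in Lemma~\ref{lem:stima_azione}, and the same arc-versus-chord and geodesic-competitor estimates for the middle piece. The only cosmetic difference is that the paper obtains the bounds $\sqrt{2V_{\min}}\,\ve^{\alpha_*}\pjump(\bar x)\leq m(\bar x(t_*),\bar x(t_{**}),\ve)\leq \tfrac{\pi}{2}\sqrt{2V_{\max}}\,\ve^{\alpha_*}\pjump(\bar x)$ by citing Lemmata~\ref{lem:est_from_above} and~\ref{lem:est_from_below}, whereas you re-derive those two estimates inline.
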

\begin{proof}
To fix the ideas let $[T_1,T_2]$ be the definition interval of $\bar x$ and
$t_{*}$, $t_{**}$ be defined as in Definition \ref{defi:type_bdd_int}, so that
\[
\action\left([T_1,T_2];\bar x\right) = \action\left([T_1,t_*];\bar x\right)+
m\left(\bar x(t_{*}),\bar x(t_{**}),\ep\right)
+\action\left([t_{**},T_2];\bar x\right).
\]
We recall that, by assumption, $\bar r(t_*)=\bar r(t_{**})=\ep$
and (since $\vjump(\bar x)=0$) $\dot {\bar r}(t_*)=\dot {\bar r}(t_{**})=0$.
We can easily estimate the first and the last term applying twice Lemma \ref{lem:stima_azione}.
Finally, Lemmas \ref{lem:est_from_above} and \ref{lem:est_from_below}
and the definition of $\pjump(\bar x)$ imply
\[
\sqrt{2V_{\min}}\, \ve^{\alpha_*}\pjump(\bar x)
\leq
m\left(\bar x(t_{*}),\bar x(t_{**}),\ep\right)
\leq
\frac{\pi}{2}\sqrt{2V_{\max}}\, \ve^{\alpha_*} \pjump(\bar x).\qedhere
\]
\end{proof}
\begin{lemma}\label{lem:stima_sup}
Let $\ve>0$, and $x_1$, $x_2$ be such that $|x_1|=|x_2|=R>\ve$.
If $\bar x = \bar r\bar s$ achieves $m(x_1,x_2,\ve)$ and $\pjump(\bar x)=0$,
then
\begin{multline*}
\frac{2\sqrt{2V_{\min}}}{\alpha_*}\,\left[R^{\alpha_*}
-\frac{\ve^{2\alpha_*}}{R^{\alpha_*}}\right] -\frac{1}{\alpha_*}
\vjump(\bar x)\, \ve^{\alpha_*} \\
\leq \action\left(\bar x\right)
\leq \frac{\sqrt{2V(s_1)} + \sqrt{2V(s_2)}}{\alpha_*}\, R^{\alpha_*}
-\frac{1}{\alpha_*} \vjump(\bar x)\, \ve^{\alpha_*}.
\end{multline*}
\end{lemma}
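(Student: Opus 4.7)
The plan is to mimic the structure of the proof of Lemma \ref{lem:stima_inf}, exploiting the assumption $\pjump(\bar x)=0$ to reduce the middle contribution (formerly controlled by $m(\bar x(t_*),\bar x(t_{**}),\ep)$) to nothing more than the jump in $\bar r\dot{\bar r}$ at the unique contact time with the constraint.

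Let $[T_1,T_2]$ be the domain of $\bar x$, and let $t_*\leq t_{**}$ be the times provided by Corollary \ref{cor:convex}. The hypothesis $\pjump(\bar x)=0$, together with Definition \ref{defi:type_bdd_int}, forces $t_*=t_{**}$, so $\bar r(t)=\ep$ only at $t=t_*$. By Corollary \ref{cor:convex}, $\dot{\bar r}<0$ on $(T_1,t_*)$ and $\dot{\bar r}>0$ on $(t_*,T_2)$, and the Euler--Lagrange equation holds on both open intervals. I would then split
\[
\action(\bar x)=\action([T_1,t_*];\bar x)+\action([t_*,T_2];\bar x)
\]
and estimate each piece separately via Lemma \ref{lem:stima_azione}, applied to the monotone radial segments $\bar r:R\to\ep$ and $\bar r:\ep\to R$ respectively. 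The key point is that the boundary values of $\bar r\dot{\bar r}$ at the external endpoints $T_1,T_2$ are controlled, via conservation of energy, by $\sqrt{2V(s_i)}\,R^{\alpha_*}$ from above and by $\sqrt{2V_{\min}}\,R^{\alpha_*}$ (minus the usual correction $\sqrt{2V_{\min}}\,\ep^{2\alpha_*}/R^{\alpha_*}$) from below, exactly as in the proof of Lemma \ref{lem:stima_azione}.

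Assembling the two estimates, the external contributions add up to
$\frac{\sqrt{2V(s_1)}+\sqrt{2V(s_2)}}{\alpha_*}R^{\alpha_*}$ (upper) and
$\frac{2\sqrt{2V_{\min}}}{\alpha_*}\bigl[R^{\alpha_*}-\ep^{2\alpha_*}/R^{\alpha_*}\bigr]$ (lower), while the two internal boundary terms combine into
\[
\frac{1}{\alpha_*}\bigl(\ep\,\dot{\bar r}(t_*^-)-\ep\,\dot{\bar r}(t_*^+)\bigr)
=-\frac{\ep}{\alpha_*}\bigl(\dot{\bar r}(t_*^+)-\dot{\bar r}(t_*^-)\bigr).
\]
By the definition of $\vjump$, $\dot{\bar r}(t_*^+)-\dot{\bar r}(t_*^-)=\ep^{-\alpha/2}\vjump(\bar x)$, so this contribution is exactly $-\frac{1}{\alpha_*}\vjump(\bar x)\,\ep^{\alpha_*}$, which is what appears on both sides of the claimed inequality. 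This yields the stated bounds.

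There is no serious obstacle: the only mildly delicate point is the careful bookkeeping of signs at $t_*$, which is handled uniformly by observing that the one-sided limits $\dot{\bar r}(t_*^\pm)$ exist (by Proposition \ref{propo:constraint}) and that Lemma \ref{lem:stima_azione} can be applied up to the boundary of each monotonicity interval by a standard limiting argument. The case (B) of Proposition \ref{propo:constraint}, in which $\vjump(\bar x)=0$ as well, is a special instance where the jump term simply disappears.
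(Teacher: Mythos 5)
Your proof is correct and follows essentially the same route as the paper: since $\pjump(\bar x)=0$ forces $t_*=t_{**}$, one splits the action at that single contact time, applies Lemma \ref{lem:stima_azione} to each radially monotone piece, and observes that the two interior boundary terms combine (via the definition of $\vjump$) into exactly $-\frac{1}{\alpha_*}\vjump(\bar x)\,\ve^{\alpha_*}$. The only cosmetic difference is that the paper invokes the radial-reflection identity $-\bar r(0)\dot{\bar r}(0^-)=\bar r(0)\dot{\bar r}(0^+)=\tfrac12\ve^{\alpha_*}\vjump(\bar x)$, whereas you use the definition of $\vjump$ directly, which amounts to the same bookkeeping.
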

\begin{proof}
Let again $T_1$, $T_2$, $t_{*}$ and $t_{**}$ be defined as at the beginning of the previous proof.
By Proposition \ref{propo:constraint} we have that $t_* = t_{**} = 0$,
obtaining that
\[
\action\left([T_1,T_2];\bar x\right) = \action\left([T_1,0^-];\bar x\right)+
\action\left([0^+,T_2];\bar x\right).
\]
Therefore we can conclude applying again Lemma \ref{lem:stima_azione} and recalling that, by Definition \ref{defi:type_bdd_int}, there holds
\[
-\bar r(0)\dot {\bar r}(0^-) = \bar r(0)\dot {\bar r}(0^+) = \frac12 \ve^{\alpha_*} \vjump(\bar x).\qedhere
\]
\end{proof}
%
\section{Morse Minimizers}\label{sec:Morse}

Throughout this section the potential $V \in \Poh$ and $\ve>0$
are fixed (in fact, the role of $\ve$ can be ruled out by scaling,
see Remark \ref{rem:hom2}).
\begin{definition}\label{defi:constr_Morse_min}
We say that $x\in H^1_{\mathrm{loc}}(\RR)$ is an \emph{$\ve$-constrained Morse minimizer} if
\begin{itemize}
 \item $\min_t |x(t)|=\ve$;
 \item $|x(t)|\to+\infty$ and $x(t)/|x(t)|\to\xi^\pm$, as $t\to\pm\infty$;
 \item for every $a<b$, $a'<b'$, and $z\in H^1(a',b')$, there holds
\begin{multline*}
z(a')=x(a),\ z(b')=x(b),\,\min_{[a',b']}|z|=\min_{[a,b]}|x| \\
\implies\quad \action([a,b];x)\leq\action([a',b'];z).
\end{multline*}
\end{itemize}
We denote with
$\morse$ the set of $\ve$-constrained Morse minimizers.
\end{definition}
Actually, since zero-energy trajectories defined on unbounded intervals
can not be bounded, the condition $|x(\pm\infty)|=+\infty$ is unnecessary (see \cite{Chenciner1998}
and references therein).

The main idea in the proof of the existence of $\ve$-constrained Morse minimizers
is to argue by approximation, solving the Bolza problem \eqref{eq:minA} with
$x_1=R\xi^-$ and $x_2 = R\xi^+$ and then letting $R \to +\infty$.
Such a procedure provides a trajectory in $H^1_{\mathrm{loc}}(\RR)$
(Appendix \ref{app:stab}), that turns out to be asymptotic to some central configurations
(Appendix \ref{app:stime}). Thus, the main thing to prove is that such configurations are
indeed $\xi^{\pm}$.
\begin{lemma}\label{lem:Morse_not_empty}
$\morse$ is not empty.
\end{lemma}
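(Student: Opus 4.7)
The strategy is to produce $\varepsilon$-constrained Morse minimizers as limits of Bolza minimizers on expanding intervals. For each $R>\varepsilon$, Lemma \ref{lem:Boltza_exists} (via the correspondence of Lemma \ref{lem:legameAJ}) furnishes a minimizer $\bar x_R\in\Gamma(R\xi^-,R\xi^+,\varepsilon)$, and I time-shift so that $\min_{t}|\bar x_R(t)|=\varepsilon$ is attained at $t=0$ (centering the constraint interval $[t_*^R,t_{**}^R]$ around the origin when it is nondegenerate).

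The first step is local compactness. Conservation of energy (Corollary \ref{coro:energy}) together with $|\bar x_R|\geq\varepsilon$ gives
\[
|\dot{\bar x}_R(t)|^2 = 2V(\bar x_R(t)) \leq 2V_{\max}\,\varepsilon^{-\alpha},
\]
so on each $[-L,L]$ the sequence $(\bar x_R)$ is equicontinuous and pinned between $\varepsilon$ and $\varepsilon+L\sqrt{2V_{\max}\varepsilon^{-\alpha}}$. A diagonal extraction along $R\to\infty$ yields a subsequence converging uniformly on compacts and weakly in $H^1_{\mathrm{loc}}$ to a limit $x\in H^1_{\mathrm{loc}}(\RR)$, with $|x(0)|=\varepsilon$ and $|x|\geq\varepsilon$ throughout.

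I then verify the Morse minimality of $x$. Suppose toward contradiction that some $z\in H^1(a',b')$ with $z(a')=x(a)$, $z(b')=x(b)$ and $\min|z|=\min_{[a,b]}|x|$ satisfies $\action([a',b'];z)<\action([a,b];x)$. Uniform convergence gives $\bar x_R(a)\to x(a)$ and $\bar x_R(b)\to x(b)$; a standard small connecting perturbation of $z$ produces $z_R\in H^1(a',b')$ with endpoints $\bar x_R(a), \bar x_R(b)$ and $\action([a',b'];z_R)<\action([a,b];\bar x_R)$ for $R$ large (using weak lower semicontinuity on the right-hand side). Splicing $z_R$ in place of $\bar x_R|_{[a,b]}$ yields an element of $\Gamma(R\xi^-,R\xi^+,\varepsilon)$, since the constraint $\min|x|=\varepsilon$ is preserved either by untouched arcs (if the constraint set of $\bar x_R$ lies outside $[a,b]$) or by the matching-minimum assumption on $z$ (otherwise), contradicting the minimality of $\bar x_R$.

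The hard part is the asymptotic behavior $|x(t)|\to\infty$ and $x(t)/|x(t)|\to\xi^\pm$ as $t\to\pm\infty$. Unboundedness follows from the Lagrange--Jacobi identity \eqref{LJ} on the components of $\{|x|>\varepsilon\}$, making $|x|^2$ convex there and, jointly with zero-energy, ruling out boundedness. The asymptotic analysis in Appendix \ref{app:stime} then ensures that $x(t)/|x(t)|$ approaches the set of central configurations. To pin these limits down to $\xi^\pm$ I would argue by level matching. Lemma \ref{lem:est_from_above}, using $V(\xi^\pm)=V_{\min}$, yields
\[
\action(\bar x_R)\leq 2\,\mathrm{hom}(\varepsilon,R,V_{\min})+C_0,
\]
with $C_0$ independent of $R$. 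If, on the outgoing branch of $\bar x_R$, the angular part $\bar s_R$ spent a macroscopic arc in a region where $V-V_{\min}\geq\gamma_0>0$, Lemma \ref{lem:est_from_below} would produce an excess of order at least $\sqrt{2\gamma_0}\,\varepsilon^{\alpha_*}$, and combining this with Lemma \ref{lem:stima_azione} and the quadratic non-degeneracy of $V$ at $\xi^\pm$ (built into the definition of $\Seh$) would upgrade the deficit to grow with $R^{\alpha_*}$, contradicting the upper bound for $R$ large. Consequently, for every $\delta>0$ there exists $L>0$ such that $|\bar s_R(t)-\xi^+|<\delta$ on $[L,T_2^R]$ uniformly in $R$ large, and symmetrically for $\xi^-$. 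Passing to the limit $R\to\infty$ then gives $x(t)/|x(t)|\to\xi^\pm$ as $t\to\pm\infty$, completing the proof that $x\in\morse$.
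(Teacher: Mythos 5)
Your overall architecture is the paper's: take $\ve$-constrained Bolza minimizers with endpoints $R\xi^\pm$, extract a limit by compactness, transfer minimality by splicing, and identify the asymptotic directions by playing the upper bound of Lemma \ref{lem:est_from_above} against the lower bound of Lemma \ref{lem:est_from_below}. The gap is in the step you yourself flag as the hard one. The excess that Lemma \ref{lem:est_from_below} yields for a macroscopic angular arc inside $\{V-V_{\min}\geq\gamma_0\}$ is $\sqrt{2\gamma_0}\,\bar r_{\min}^{\alpha_*}\cdot|\bar s(t_2)-\bar s(t_1)|$, and by bounding $\bar r_{\min}\geq\ve$ you only obtain the constant $\sqrt{2\gamma_0}\,\ve^{\alpha_*}$ --- which is perfectly compatible with the upper bound $2\,\mathrm{hom}(\ve,R,V_{\min})+C_0$ and therefore produces no contradiction. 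Your proposed ``upgrade'' via Lemma \ref{lem:stima_azione} and the quadratic non-degeneracy does not supply the missing divergence: at the outer endpoint $\bar s_R(T_2^R)=\xi^+$, so $V(\bar s_R(T_2^R))=V_{\min}$ and the boundary terms in Lemma \ref{lem:stima_azione} carry no excess; moreover the quadratic pinching at $\xi^\pm$ is used in the paper (Lemma \ref{lem:asintotiche}, Proposition \ref{propo:unico_tipo}) precisely to \emph{kill} terms of the form $\bigl(\sqrt{V(s)}-\sqrt{V_{\min}}\bigr)R^{\alpha_*}$, not to create growth of order $R^{\alpha_*}$.

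The mechanism that actually closes the argument is that the offending arc is forced to occur at \emph{large radius}, so that the weight $\bar r_{\min}^{\alpha_*}$ in Lemma \ref{lem:est_from_below} itself diverges. Concretely: one first ensures that the limit $x$ solves \eqref{eq:dynsys}--\eqref{eq:dynsys2} for large $|t|$ (the paper gets this from the $\cont^2$ convergence of Theorem \ref{teo:stab}; with your weak-$H^1_{\mathrm{loc}}$ compactness you would have to recover it from the minimality of $x$), then applies Theorem \ref{teo:stime} to get $\nabla_T V(s(t))\to0$; since $\xi^+$ is isolated in the critical set, either $s(t)\to\xi^+$ or $|s(t)-\xi^+|\geq\delta$ for all large $t$. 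In the second case, since $\bar s_R(T_2^R)=\xi^+$, for every fixed $T$ and every large $R$ the path $\bar s_R$ must cross the annulus $\{\delta/4\leq|s-\xi^+|\leq\delta/2\}$ --- where $V-V_{\min}\geq\mu\delta^2/16=:\eta_1$ by the definition of $\Seh$ --- at some time $t_{1,R}\geq T$, hence at radius $r_R(t_{1,R})\geq r(T)-1$, which tends to infinity with $T$. Lemma \ref{lem:est_from_below} then gives an excess at least $\sqrt{2\eta_1}\,(\delta/4)\,[r(T)-1]^{\alpha_*}$, unbounded in $T$, contradicting the $R$-independent constant in Lemma \ref{lem:est_from_above}. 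Without locating the angular deviation at divergent radii, your level-matching cannot conclude.
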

\begin{proof}
As we mentioned above, we can construct an element of $\morse$
as limit of suitable Bolza minimizers. To this aim, let
$n \in \NN$ (large) and, following Section \ref{sec:Bolza}, let $x_n(t)$
be a solution of the minimization problem \eqref{eq:minA}, with endpoints $x_1= n\xi^-$ and $x_2= n\xi^+$.
Using Proposition \ref{propo:constraint}, we can associate with each $x_n$ the times $t_{*,n}\leq t_{**,n}$, in which it interacts with the constraint.

First of all, let us observe that
\[
t_{**,n}-t_{*,n} \leq C,
\]
independent of $n$; indeed by Lemma \ref{lem:est_from_above}
\begin{multline*}
\frac{\pi}{2} \ve^{\alpha_*} |\xi_+-\xi_-|\sqrt{2V_{\max}}
\geq \action\left([t_{*,n},t_{**,n}];x_n\right) \\
= 2\int_{t_{*,n}}^{t_{**,n}} \frac{V(s_n)}{r_n^{\alpha}}  \geq 2\frac{V_{\min}}{\ve^{\alpha}}(t_{**,n}-t_{*,n}).
\end{multline*}
Up to a time translation, we can assume that $t_{*,n}\leq 0 \leq  t_{**,n}$, in such a way that
$x_n$ is defined, say, on $[T_{1,n},T_{2,n}]$.
Using Lemma \ref{lem:est_from_below} we have that
\[
T_{1,n}\to -\infty \quad \text{ and } \quad T_{2,n}\to +\infty:
\]
indeed, for instance,
\begin{multline*}
C n^{\alpha_*} \leq \mathrm{hom}(\ve,n,V_{\min}) \leq \action\left([t_{**,n},T_{2,n}];x_n\right) \\
= 2\int_{t_{**,n}}^{T_{2,n}} \frac{V(s_n)}{r_n^{\alpha}}  \leq 2\frac{V_{\max}}{\ve^{\alpha}}(T_{2,n}-t_{**,n}).
\end{multline*}
Since each $x_n$ satisfies a differential equation separately on $(T_{1,n},t_{*,n})$,
$(t_{*,n},t_{**,n})$ and $(t_{**,n},T_{2,n})$ (see Corollary \ref{cor:convex} and Lemma \ref{lem:polarEL}),
and $x_n$, $\dot x_n$ are uniformly bounded on $[t_{*,n}-1,t_{**,n}+1]$,
we can apply Theorem \ref{teo:stab} (three times). We obtain
that, up to a subsequence,
\[
x_n \to x \text{ in } H^1_{\mathrm{loc}}(\RR),
\]
pointwise on $\RR$, uniformly on every compact interval, and $\cont^2$ outside of
two suitable times $t_{*}$ and $t_{**}$, where suitable Euler-Lagrange equations hold;
moreover also conservation of (zero-)energy is satisfied.

We claim that $x \in \morse$. Actually, the first property of Definition \ref{defi:constr_Morse_min}
is trivially satisfied, while the third one can be easily deduced by contradiction, using the minimality of
$x_n$. Of course $|x(t)| \to +\infty$, therefore we are left to prove that its
limiting configurations are exactly $\xi^{\pm}$.

Let us assume by contradiction that, for instance, $s(t)=x(t)/|x(t)| \not \to \xi^+$, as $t \to +\infty$.
Nevertheless, since $x$ satisfies the assumptions of Theorem \ref{teo:stime},
we have that $\nabla_T V(s(t)) \to 0$. Since $\xi^+$ is an isolated point in $\{s : \, \nabla_T V(s)= 0\}$,
we infer that $|s(t)-\xi^+| \geq \delta >0$ for a suitable $\delta$ and large $t$.
Since $s_n\stackrel{n}{\rightarrow} s$ uniformly on compact sets we deduce the existence of
$t_{1,n},t_{2,n} \to +\infty$ such that
$t_{1,n} < t_{2,n} < T_{2,n}$ and, for some suitable $\eta_1,\eta_2 >0$
\[
\min_{t \in [t_{1,n},t_{2,n}]}\left| V(s_n(t))-V_{\min} \right| \geq \eta_1
\qquad \text{and} \qquad
\left|s_n(t_{1,n}) - s_n(t_{2,n}) \right|\geq \eta_2,
\]
for every $n$.
Applying Lemma \ref{lem:est_from_below} we have that
\begin{multline*}
\action([T_{1,n},T_{2,n}];x_n)
\geq 2\,\mathrm{hom}\left(\ep, n, V_{\min} \right) \\
\qquad + \sqrt{2\min_{t\in[t_{1,n},t_{2,n}]}\left( V(s_n(t))-V_{\min} \right)}
           \left[\min_{t\in[t_{1,n},t_{2,n}]} r_n^{\alpha_*} (t)\right] |s_n(t_{2,n})-s_n(t_{1,n})|
                 \\
\geq 2\,\mathrm{hom}\left(\ep, n, V_{\min} \right) + \sqrt{2\eta_1} \, \eta_2 \, r_n^{\alpha_*} (t_{1,n}).
\end{multline*}
On the other hand, according to Lemma \ref{lem:est_from_above} we obtain
\[
\action([T_{1,n},T_{2,n}];x_n) \leq
2\mathrm{hom}\left( \ve, n, V_{\min} \right)
+ \frac{\pi}{2} \ve^{\alpha_*} \left|\xi_+ - \xi_-\right| \sqrt{2V_{\max}}.
\]
Since, as $t_{1,n} \to +\infty$, $r_n(t_{1,n})\to +\infty$, we obtain a contradiction.
\end{proof}
\begin{remark}\label{rem:freeconv}
Reasoning as above, one may try to obtain a free (parabolic) Morse minimizer as limit of a sequence $x_n$ of
free Bolza ones. In this direction two problems arise: on one hand, it may happen that the sequence
escapes from every bounded domain; on the other hand, it may converge to collision.
Actually this is the main reason for which we decided to introduce the constraint. Nonetheless,
if one may ensure that, for every $n$, $0<C_1 \leq \min |x_n| \leq C_2 <+\infty$ then the
previous procedure would lead to a free Morse minimizer.
\end{remark}
Since by definition any restriction of a Morse minimizer is indeed a Bolza one
(with the appropriate constraint), we have that also Morse minimizers can be classified
according to their jumps, exactly as in Definition \ref{defi:type_bdd_int}.

\begin{lemma}\label{lem:descrizMorse}
Let $x=rs \in\morse$. Then (up to a time translation, that now on will be left out)
there exist $t_* \leq 0 \leq t_{**}$ such that:
\begin{enumerate}
\item $r(t) = \ve$ if and only if $t \in [t_{*},t_{**}]$,
$\dot{r}(t)< 0$ (resp. $>0$) if and only if $t <t_*$ (resp. $t > t_{**}$);
\item $\ddot x(t) = \nabla V(x(t))$, for every $t \not\in [t_{*},t_{**}]$;
\item letting $\pjump(x)$ and $\vjump(x)$ be defined as in Definition \ref{defi:type_bdd_int},
then both of them are non-negative and at least one vanishes.
\end{enumerate}
Moreover:
\begin{enumerate}
\item[4.] $\dfrac12 |\dot x(t)|^2 = V(x(t))$, for every $t \in \RR$.
\end{enumerate}
\end{lemma}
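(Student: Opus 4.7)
The plan is to reduce everything to the fixed-interval Bolza theory of Section \ref{sec:Bolza}: the third bullet of Definition \ref{defi:constr_Morse_min} asserts exactly that, for every $a<b$ with $\min_{[a,b]}|x|=\varepsilon$, the restriction $x|_{[a,b]}$ achieves $m(x(a),x(b),\varepsilon)$. Thus Corollary \ref{cor:convex}, Lemma \ref{lem:polarEL}, Proposition \ref{propo:constraint} and Corollary \ref{coro:energy} will all be available on such restrictions, and the job is to promote these local statements to global ones on $\RR$.

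First I would prove item 1. Set $t_*:=\inf\{t:r(t)=\varepsilon\}$ and $t_{**}:=\sup\{t:r(t)=\varepsilon\}$; since $\min|x|=\varepsilon$ is attained and $r(t)\to\infty$ as $t\to\pm\infty$, both numbers are finite and realized, and after a time translation $t_*\leq 0\leq t_{**}$. Picking any $a<t_*$, $b>t_{**}$ and applying Corollary \ref{cor:convex} to $x|_{[a,b]}$ produces times $\tau_*(a,b)\leq\tau_{**}(a,b)$ at which $r=\varepsilon$, with $\dot r<0$ on $(a,\tau_*)$ and $\dot r>0$ on $(\tau_{**},b)$; by definition of $t_*$ and $t_{**}$ these local times must agree with the global ones. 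Letting $a\to-\infty$ and $b\to+\infty$ gives item 1, the only remaining point being that $r$ cannot return to $\varepsilon$ outside $[t_*,t_{**}]$; this is forced by the convexity of $r^2$ from the Lagrange--Jacobi identity \eqref{LJ}, which is valid on every open interval disjoint from the constraint set. Item 2 is then immediate, since the same Corollary \ref{cor:convex} provides the Euler--Lagrange equation on each piece $(a,t_*)$ and $(t_{**},b)$, hence on all of $\RR\setminus[t_*,t_{**}]$.

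For item 3 I would apply Proposition \ref{propo:constraint} to any restriction $x|_{[a,b]}$ with $a<t_*$ and $b>t_{**}$: exactly one of the three alternatives (a), (b), (c) listed there occurs. By Definition \ref{defi:type_bdd_int}, case (a) gives $\pjump(x)>0$ and $\vjump(x)=0$; case (b) gives $\pjump(x)=\vjump(x)=0$; case (c) entails a genuine radial reflection, so $\dot r(t_*^+)=-\dot r(t_*^-)$, and since item 1 already yields $\dot r(t_*^-)<0$, we conclude $\vjump(x)>0$ and $\pjump(x)=0$. In all three situations both jumps are non-negative and at least one of them vanishes.

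Item 4 is a direct consequence of Corollary \ref{coro:energy} applied to the restrictions above: it provides continuity of $|\dot x|^2=\dot r^2+r^2|\dot s|^2$ across $t_*$ and $t_{**}$ (preserved by the radial reflection of case (c), which only reverses the sign of $\dot r$) together with the zero-energy identity $\tfrac12|\dot x|^2=V(x)$ throughout each restriction, hence throughout $\RR$. The main obstacle is the first step: Lagrange--Jacobi convexity is available only on intervals where the Euler--Lagrange equation is already known to hold, so one has to iterate Corollary \ref{cor:convex} along an exhausting family of restrictions and verify that the locally defined times $\tau_*(a,b)$ and $\tau_{**}(a,b)$ stabilize to a single global pair $(t_*,t_{**})$. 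Once this is established, items 2--4 are routine consequences of the Bolza-level analysis.
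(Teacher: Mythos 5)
Your proposal is correct and follows essentially the same route as the paper: the paper's own proof simply observes that $x|_{[T_1,T_2]}$ achieves $m(x(T_1),x(T_2),\eps)$ for $T_1\ll 0\ll T_2$ and then cites Corollary \ref{cor:convex}, Proposition \ref{propo:constraint} and Corollary \ref{coro:energy}, which is exactly the reduction you carry out (in more detail). The stabilization of the times $\tau_*(a,b),\tau_{**}(a,b)$ that you flag as the main obstacle is in fact automatic, since the set $\{t: r(t)=\ve\}$ is intrinsic and, once contained in a single restriction interval, Corollary \ref{cor:convex} identifies it with $[t_*,t_{**}]$.
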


\begin{proof}
If $T_1\ll0$ and $T_2\gg0$ then
$x|_{[T_1,T_2]}$ achieves $m(x(T_1),x(T_2),\eps)$. The results follow from
Corollary \ref{cor:convex}, Proposition \ref{propo:constraint} and Corollary \ref{coro:energy}.
\end{proof}

In general, for any fixed $\ve$ (and potential $V$),
there is no reason to expect uniqueness for the Morse minimizers.
Nevertheless, it is possible to show that, with respect to the jump classification,
they are all of the same type. In order to do that we need to sharpen the asymptotic estimates
contained in Appendix \ref{app:stime}, exploiting the fact that $\xi^{\pm}$ are non degenerate minima
of $V|_{\sphere}$.
\begin{lemma}\label{lem:asintotiche}
Let $x = rs\in\morse$.
Then
\[
\lim_{t \to \pm\infty} [r(t)]^{\alpha_*}|s(t)-\xi^{\pm}| = 0.
\]
\end{lemma}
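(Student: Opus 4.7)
The plan is to pass to McGehee-type desingularizing coordinates, so that the asymptotic behaviour of $x$ as $t\to+\infty$ becomes the approach to a hyperbolic equilibrium on the collision manifold, and then to quantify the decay of $|s - \xi^+|$ against the growth of $r^{\alpha_*}$. Set $\rho := \log r$ and rescale time by $d\tau/dt := r^{-(2+\alpha)/2}$; writing $' := d/d\tau$ and $v := \rho'$, a direct computation from \eqref{eq:ELrs} transforms the energy identity into $\tfrac12 v^2 + \tfrac12 |s'|^2 = V(s)$ and the angular equation into the autonomous system
\[
s'' + \alpha_* v\, s' + |s'|^2 s = \nabla_T V(s),\qquad v' = \alpha_*|s'|^2,
\]
with $\rho$ decoupled via $\rho' = v$. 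By Lemma \ref{lem:descrizMorse} and Appendix \ref{app:stime}, $s \to \xi^+$ as $t\to+\infty$; combined with the monotonicity $v' \geq 0$, the bound $v^2 \leq 2V_{\max}$ from the energy identity, and $V(\xi^+) = V_{\min}$, this translates in $\tau$-time into $s \to \xi^+$, $s' \to 0$ and $v \to v_+ := \sqrt{2V_{\min}}$. The trajectory thus converges to the equilibrium $P_+ := (\xi^+, 0, v_+)$.

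To extract the rate of decay, I linearize the $(s,s')$-subsystem at $P_+$. Writing $\eta := s - \xi^+$, which lies in $T_{\xi^+}\sphere$ at first order because $s\in\sphere$, the non-degeneracy condition encoded in the definition of $\Seh$ gives $\nabla_T V(\xi^+ + \eta) = H\eta + O(|\eta|^2)$, where $H$ is the Hessian of $V|_{\sphere}$ at $\xi^+$, a positive-definite operator on $T_{\xi^+}\sphere$. The leading-order equation reads $\eta'' + \alpha_* v_+ \eta' - H\eta = 0$, whose characteristic exponents, for each eigenvalue $h>0$ of $H$, are
\[
\lambda_\pm = \frac{-\alpha_* v_+ \pm \sqrt{\alpha_*^2 v_+^2 + 4h}}{2},
\]
satisfying $\lambda_+ > 0 > \lambda_-$ and, crucially,
\[
|\lambda_-| = \frac{\alpha_* v_+ + \sqrt{\alpha_*^2 v_+^2 + 4h}}{2} > \alpha_* v_+
\]
because $h>0$. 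After reducing by the energy integral and by the cyclic variable $\rho$, the equilibrium is therefore hyperbolic; the stable manifold theorem applied to this reduction produces constants $\beta > \alpha_* v_+$ and $C>0$ with
\[
|s(\tau) - \xi^+| + |s'(\tau)| \leq C\, e^{-\beta\tau},\qquad \tau\to+\infty.
\]

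To conclude, since $v \to v_+$, one has $\rho(\tau) = v_+\tau + o(\tau)$, hence $r(t)^{\alpha_*} = e^{\alpha_*\rho(\tau)} = e^{\alpha_* v_+ \tau(1+o(1))}$, and combining with the previous estimate,
\[
r(t)^{\alpha_*}|s(t)-\xi^+| \leq C\, e^{-(\beta - \alpha_* v_+)\tau\,(1+o(1))} \longrightarrow 0,
\]
as $t\to+\infty$. The case $t\to-\infty$ is symmetric, replacing $v_+$ by $-v_+$ and running the stable manifold argument in backward $\tau$-time. The main technical obstacle is the rigorous implementation of the stable manifold step: after fixing the energy level and quotienting by $\rho$, one must check that all the stable eigenvalues lie strictly below $-\alpha_* v_+$, so that the decay rate $\beta$ can be taken strictly greater than $\alpha_* v_+$ uniformly over all eigenvalues of $H$. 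As a self-contained alternative avoiding abstract dynamical-systems machinery, one can set up a Lyapunov-type bootstrap based on the monotonicity $v' \geq 0$, the non-degeneracy lower bound $V(s) - V_{\min} \geq \mu|s-\xi^+|^2$, and a Gr\"onwall argument on the quantity $\Lambda(\tau) := |s-\xi^+|^2 + c_0|s'|^2$ for a suitable $c_0>0$; this directly produces the exponential decay with sharp rate and stays close in spirit to the variational estimates of Section \ref{sec:level_estimates}.
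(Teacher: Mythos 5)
Your argument is correct, but it follows a genuinely different route from the one in the paper. You pass to McGehee coordinates, reduce on the zero-energy shell (which removes the neutral direction along the line of equilibria $(\xi^+,0,v)$), check that the equilibrium $(\xi^+,0)$ of the reduced $(s,s')$-system is hyperbolic with all stable eigenvalues of modulus $\tfrac12\bigl(\alpha_* v_+ + \sqrt{\alpha_*^2v_+^2+4h}\bigr)>\alpha_* v_+$ (uniformly, since $H\geq 2\mu I$ by the definition of $\Seh$), and then trade the exponential decay $e^{-\beta\tau}$ against $r^{\alpha_*}=e^{\alpha_*\rho}\sim e^{\alpha_* v_+\tau}$; your computations of the transformed system, of the characteristic roots, and of the final comparison are all correct, and the preliminary facts you invoke ($s\to\xi^\pm$, $s'\to0$, $v\to\pm\sqrt{2V_{\min}}$, $\tau\to\infty$) are available from Definition \ref{defi:constr_Morse_min} and Theorem \ref{teo:stime}. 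The paper instead works directly in the original time variable: it sets $u(t)=t^{-\gamma}[r(t)]^{-\alpha_*}$ and $v(t)=|s(t)-\xi^+|$, derives from \eqref{eq:ELrs} the differential inequalities $\ddot u+2\tfrac{\dot r}{r}\dot u\leq \gamma(\gamma+1)t^{-2}u$ and $\ddot v+2\tfrac{\dot r}{r}\dot v\geq \mu\Upsilon_+^{-2}t^{-2}v$ (using the quadratic nondegeneracy at $\xi^\pm$ and Corollary \ref{coro:lim_r}), and concludes by a maximum-principle comparison on $w=u-\tfrac{u(\tau)}{v(\tau)}v$ for $\gamma$ small, obtaining $[r(t)]^{\alpha_*}|s(t)-\xi^+|\leq Ct^{-\gamma}$. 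The two proofs rest on the same quantitative input (the Hessian lower bound $\mu$ versus the homothetic growth rate of $r$), but the paper's version is self-contained and elementary, avoiding the stable manifold theorem and the energy-shell reduction, at the price of a non-explicit exponent $\gamma$; yours yields the sharp decay rate dictated by the spectrum of $H$ but requires the dynamical-systems machinery you correctly identify as the main step to be carried out rigorously (your proposed Gr\"onwall bootstrap on $\Lambda(\tau)=|s-\xi^+|^2+c_0|s'|^2$ would in fact reproduce something very close to the paper's comparison argument, transplanted to $\tau$-time).
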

\begin{proof}
We prove the lemma in the case $t\to+\infty$. Given $\gamma >0$ (to be chosen later),
we define the function
\[
u(t) := t^{-\gamma}\cdot [r(t)]^{-\alpha_*}.
\]
We obtain that $u > 0$, $u \to 0$ as $t \to +\infty$, and
\[
\begin{split}
\frac{\d}{\d t}(r^2\dot u)
         &= \frac{\d}{\d t} \left( -\gamma t^{-\gamma-1}\cdot r^{2-\alpha_*}
                        - \alpha_* t^{-\gamma}\cdot r^{1-\alpha_*}\dot r \right) \\
         &= \gamma(\gamma+1) t^{-\gamma-2}\cdot r^{2-\alpha_*} -  2(1-\alpha_*)\gamma t^{-\gamma-1}\cdot
               r^{1-\alpha_*}\dot r \\
         & \qquad - \alpha_* t^{-\gamma}\cdot r^{-\alpha_*}\left(({1-\alpha_*})\dot r^2
               + r\ddot r\right)\\
         &=  \gamma(\gamma+1) t^{-\gamma-2}\cdot r^{2-\alpha_*} -  2(1-\alpha_*)\gamma t^{-\gamma-1}\cdot
               r^{1-\alpha_*}\dot r \\
         & \qquad - \alpha_* t^{-\gamma}\cdot r^{-\alpha_*}\left(\alpha_* |\dot s|^2r^2\right)\\
         &=  r^2 u \left[\gamma(\gamma+1) t^{-2} -  \alpha \gamma t^{-1}\cdot
               r^{-1}\dot r - \alpha_*^2 |\dot s|^2 \right],
\end{split}
\]
so that, recalling that $\dot r(t)>0$ for $t$ large,
\[
\begin{split}
\ddot u + 2\dfrac{\dot r}{r}\dot u \leq \frac{\gamma(\gamma+1)}{t^2}\, u.
\end{split}
\]
Analogously, letting
\[
v(t) := |s(t)-\xi^+|,
\]
we have $v > 0$, $v \to 0$ as $t  \to +\infty$, and
\[
\dot v (t ) = -\frac{\xi^+ \cdot \dot s }{|s-\xi^+|}, \qquad
\ddot v (t ) = -\frac{\xi^+ \cdot \ddot s }{|s-\xi^+|} + \frac{|\xi^+ \cdot \dot s |^2}{|s-\xi^+|^3}
\]
(recall that $s\cdot \dot s\equiv0$), implying
\[
\begin{split}
\ddot v  + 2\dfrac{\dot r  }{r }\dot v  & = \frac{|\xi^+ \cdot \dot s |^2}{|s-\xi^+|^3}- \frac{\xi^+}{|s-\xi^+|} \cdot
    \left( \ddot s  + 2\dfrac{\dot r  }{r }\dot s   \right) \smallskip \\
    & \geq - \frac{\xi^+}{|s-\xi^+|} \cdot \left( \dfrac{\nabla_T V(s)}{r^{2+\alpha}} -  |\dot s |^2 s  \right) \smallskip\\
    & \geq \frac{\left(\nabla_T V(s) - \nabla_T V(\xi^+)\right)\cdot (s-\xi^+)}{|s-\xi^+|\,\,r^{2+\alpha}}
\end{split}
\]
for large $t $, since $s \cdot \xi^+ \to 1$ as $t  \to +\infty$.
Let us observe that, since $V\in \Seh$, we have that whenever $s\in\sphere$ is sufficiently close to $\xi^\pm$
there holds
\begin{equation*}
\left(\nabla_T V(s)-\nabla_T V(\xi^\pm)\right)\cdot (s-\xi^\pm) \geq
2\mu |s-\xi^\pm|^2.
\end{equation*}
Taking into account Corollary \ref{coro:lim_r},
we have that, for $t$ sufficiently large,
\[
\frac{\left(\nabla_T V(s) - \nabla_T V(\xi^+)\right)\cdot (s-\xi^+)}{|s-\xi^+|}
\cdot\frac{1}{r^{2+\alpha}} \geq 2\mu |s-\xi^+| \cdot \frac{1}{2\Upsilon_+^2 t^2},
\]
in such a way that
\[
\ddot v  + 2\dfrac{\dot r  }{r }\dot v  \geq \frac{\mu}{\Upsilon_+^2 t^2} \,v.
\]
Assuming that the previous inequalities hold for, say, $t\geq\tau$, we
infer that the function
\[
w(t ) := u(t ) - \frac{u(\tau)}{v(\tau)}v(t )
\]
satisfies
\[
\begin{cases}
\displaystyle\ddot w + 2\dfrac{\dot r  }{r }\dot w \leq \frac{\gamma(\gamma+1)}{t^2}\, w
+ \frac{u(\tau)}{v(\tau)}\left(\gamma(\gamma+1)-\frac{\mu}{\Upsilon_+^2}\right)\frac{1}{t^2}\,v, \smallskip\\
w(\tau) = \lim_{t  \to +\infty} w(t ) =0.
\end{cases}
\]
Let us now choose $\gamma$ sufficiently small, in such a way that
\[
\gamma(\gamma+1)-\frac{\mu}{\Upsilon_+^2}<0,
\]
and let us assume by contradiction that $w$ is not everywhere positive. As a
consequence there exists $\bar t >\tau$ such that
$w(\bar t )\leq 0$, $\dot w(\bar t )= 0$, $\ddot w(\bar t )\geq 0$.
Substituting in the equation for $w$ this yields a contradiction, therefore, for $t\geq\tau$,
\[
u(t ) - \frac{u(\tau)}{v(\tau)}v(t )>0,\quad\text{ that is, }\quad
[r(t)]^{\alpha_*}|s(t)-\xi^+|\leq Ct^{-\gamma}. \qedhere
\]
\end{proof}
The previous estimates provide a very strong control on the
action of the tails of Morse minimizers.
\begin{lemma}\label{lem:unico_tipo}
Let $x,\,\hat x \in \morse$. Then
\[
\lim_{R \to +\infty} \left|\action(x|_{\{|x|\leq R\}}) - \action(\hat x|_{\{|\hat x|\leq R\}})\right| = 0.
\]
\end{lemma}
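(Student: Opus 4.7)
The plan is to exploit the Bolza-type minimality encoded in Definition \ref{defi:constr_Morse_min} by comparing $x$ with a competitor built from $\hat x$ and two short correction arcs on the sphere $\{|y|=R\}$. By Lemma \ref{lem:descrizMorse}, for $R$ large there are unique times $T_R^-<t_*\le t_{**}<T_R^+$ with $r(T_R^\pm)=R$, and $\dot r$ has constant sign outside $[t_*,t_{**}]$; in particular $\{t:|x(t)|\le R\}=[T_R^-,T_R^+]$. Define $\hat T_R^\pm$ analogously for $\hat x$.

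Next, I would construct an admissible competitor $z$ by juxtaposing three pieces:
\begin{itemize}
\item a zero-energy arc $z^-$ on $\{|y|=R\}$ from $x(T_R^-)=Rs(T_R^-)$ to $\hat x(\hat T_R^-)=R\hat s(\hat T_R^-)$;
\item the restriction $\hat x|_{[\hat T_R^-,\hat T_R^+]}$;
\item a zero-energy arc $z^+$ on $\{|y|=R\}$ from $R\hat s(\hat T_R^+)$ to $Rs(T_R^+)$.
\end{itemize}
Then $z$ shares endpoints with $x|_{[T_R^-,T_R^+]}$ and satisfies $\min|z|=\ve$, so the third item in Definition \ref{defi:constr_Morse_min} gives
\[
\action\bigl([T_R^-,T_R^+];x\bigr)\le \action\bigl([\hat T_R^-,\hat T_R^+];\hat x\bigr)+\action(z^-)+\action(z^+).
\]

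The heart of the argument is to bound $\action(z^\pm)$. Parametrizing $z^-$ as a great-circle arc on $\{|y|=R\}$ at zero energy, a computation modeled on the proof of Lemma \ref{lem:est_from_above} gives
\[
\action(z^-)\le \tfrac{\pi}{2}\sqrt{2V_{\max}}\,R^{\alpha_*}\,\bigl|s(T_R^-)-\hat s(\hat T_R^-)\bigr|,
\]
and similarly for $z^+$. By Lemma \ref{lem:asintotiche}, both $[r(T_R^-)]^{\alpha_*}|s(T_R^-)-\xi^-|$ and $[\hat r(\hat T_R^-)]^{\alpha_*}|\hat s(\hat T_R^-)-\xi^-|$ tend to zero as $R\to\infty$; since $r(T_R^-)=\hat r(\hat T_R^-)=R$, a triangle inequality yields $R^{\alpha_*}|s(T_R^-)-\hat s(\hat T_R^-)|=o_R(1)$, hence $\action(z^-)=o_R(1)$. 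The analogous bound holds for $\action(z^+)$. Exchanging the roles of $x$ and $\hat x$ produces the reverse inequality, and the conclusion follows.

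The principal obstacle is precisely this arc estimate: the natural bound for a zero-energy sphere arc at radius $R$ carries the diverging factor $R^{\alpha_*}$, so the argument only closes because Lemma \ref{lem:asintotiche} provides angular convergence strictly faster than $r^{-\alpha_*}$, making the diverging and vanishing factors combine into an infinitesimal.
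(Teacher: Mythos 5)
Your proposal is correct and follows essentially the same route as the paper: compare $x|_{\{|x|\le R\}}$ with the competitor obtained by juxtaposing $\hat x|_{\{|\hat x|\le R\}}$ with two connecting arcs at radius $R$ (the paper uses the Bolza minimizers achieving $m(x(T_1),\hat x(\hat T_1),R)$ and $m(\hat x(\hat T_2),x(T_2),R)$, bounded via Lemma \ref{lem:est_from_above} by exactly the zero-energy great-circle arcs you describe), then kill the $R^{\alpha_*}$ factor with Lemma \ref{lem:asintotiche} and symmetrize. No substantive difference.
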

\begin{proof}
As a notation, we write $x=rs$, $\hat x= \hat r \hat s$ and, for $R$ large,
\[
\left\{ t : \, |x(t)|\leq R\right\} = [T_1,T_2], \quad
\left\{ t : \, |\hat x(t)|\leq R\right\} = [\hat T_1,\hat T_2].
\]
Let us observe that the corresponding restrictions are Bolza minimizers, with the suitable endpoints.
Recalling the definition of $m=m(x_1,x_2,\ve)$
we have that
\begin{multline}\label{eq:unico_tipo_1}
\action \left([T_1,T_2]; x \right) \leq \action \left([\hat T_1,\hat T_2]; \hat x \right) \\
+ m\left(x(T_1),\hat x(\hat T_1),R  \right) + m\left(\hat x(\hat T_2),x(T_2),R  \right)
\end{multline}
(see Figure \ref{fig:3}).
\begin{figure}
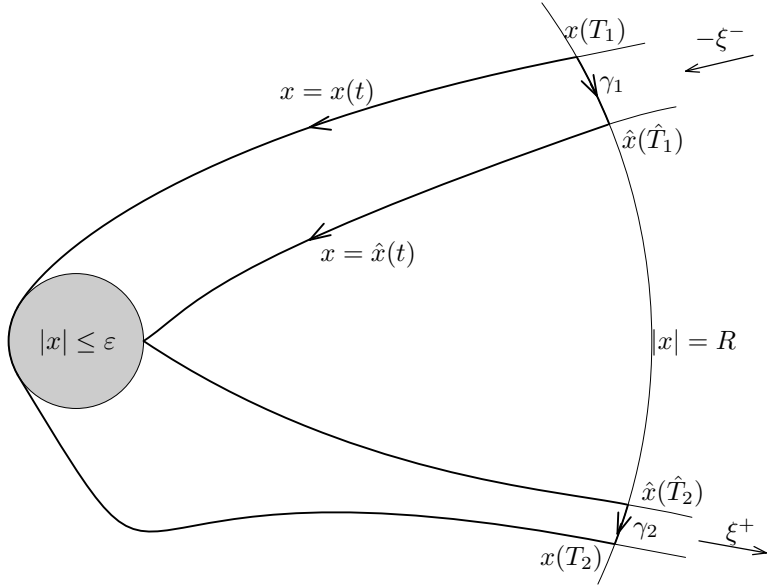

\begin{center}
\begin{mac}
\begin{texdraw}
\drawdim cm  \setunitscale .9
{
\linewd 0.03
\fcir f:0.8 r:1
\larc r:1 sd:150 ed:210
\linewd 0.01
\lcir r:1
\larc r:8.5 sd:-25 ed:36
\linewd 0.02 \setgray 0 \lpatt()
\move (10 4.225) \arrowheadtype t:V \arrowheadsize l:0.2 w:0.12 \avec (9 3.99)
\move (9.2 -2.95) \arrowheadtype t:V \arrowheadsize l:0.2 w:0.12 \avec (10.2 -3.13)
\linewd 0.03
\move (-0.866 0.5)\clvec(-0.366 1.366)(1.8 3.1)(7.38 4.2)
\linewd 0.01
\clvec(7.4 4.2)(7.9 4.3)(8.4 4.4)
\linewd 0.03
\move (-0.866 -0.5)\clvec(1.8 -4.9)(-0.366 -1.366)(7.95 -3)
\linewd 0.01
\clvec(8 -3.01)(8.5 -3.1)(9 -3.2)
\linewd 0.03
\move (1 0)\clvec(1.75 0.5)(1.8 1.1)(7.86 3.2)
\linewd 0.01
\clvec(7.88 3.21)(8.38 3.38)(8.86 3.46)
\linewd 0.03
\move (1 0)\clvec(4 -2)(7.1 -2.2)(8.14 -2.42)
\linewd 0.01
\clvec(8.6 -2.5)(8.6 -2.49)(9.1 -2.61)
\linewd 0.03
\move (0 0)\larc r:8.5 sd:22.16 ed:29.6
\move (0 0)\larc r:8.5 sd:339.4 ed:343.45
\linewd 0.03
\move (3.5 3.174) \arrowheadtype t:V  \arrowheadsize l:0.3 w:0.2 \avec (3.45 3.158)
\move (3.5 1.524) \arrowheadtype t:V \arrowheadsize l:0.3 w:0.2 \avec (3.45 1.5)
\move (7.6 3.8)   \arrowheadtype t:V \arrowheadsize l:0.3 w:0.2 \avec (7.7 3.62)
\move (8.05 -2.7) \arrowheadtype t:V \arrowheadsize l:0.3 w:0.2 \avec (8 -2.85)
\textref h:C v:C
\htext (0 0) {$|x|\leq\varepsilon$}
\htext (9.1 0) {$|x|=R$}
\htext (4.3 1.3) {$x=\hat x(t)$}
\htext (3.7 3.7) {$x= x(t)$}
\htext (7.65 4.6) {$x(T_1)$}
\htext (7.3 -3.2) {$x(T_2)$}
\htext (8.5 3.0) {$\hat x(\hat T_1)$}
\htext (8.8 -2.2) {$\hat x(\hat T_2)$}
\htext (7.9 3.8) {$\gamma_1$}
\htext (8.4 -2.78) {$\gamma_2$}
\htext (9.5 4.5) {$-\xi^-$}
\htext (9.8 -2.8) {$\xi^+$}
}
\end{texdraw}
\end{mac}
\end{center}
\caption{ \label{fig:3} if $\gamma_i$ achieves $m\left(x(T_i),\hat x(\hat T_i),R  \right)$, $i=1,2$,
then equation \eqref{eq:unico_tipo_1} holds.}
\end{figure}
Lemma~\ref{lem:est_from_above} implies that
\begin{multline*}
m\left(x(T_1),\hat x(\hat T_1),R  \right)
\leq \frac{\pi\sqrt{2V_{\max}}}{2} R^{\alpha_*} |s(T_1)-\hat s(\hat T_1)| \\
\leq \frac{\pi\sqrt{2V_{\max}}}{2} R^{\alpha_*} \left[ |s(T_1)-\xi^-| + |\hat s(\hat T_1)-\xi^-|\right],
\end{multline*}
and the last term goes to zero by Lemma \ref{lem:asintotiche}.
Since an analogous estimate holds for $m(\hat x(\hat T_2),x(T_2),R )$,
we conclude by exchanging the role of $x$ and $\hat x$.
\end{proof}
We are finally ready to prove the main result of this section.
\begin{proposition}\label{propo:unico_tipo}
Let $\ve>0$ and $V \in \Poh$ be fixed and let $x,\, \hat x \in \morse$.
Then
\[
\pjump(x) = \pjump(\hat x) \quad \text{and} \quad \vjump(x) = \vjump(\hat x).
\]
\end{proposition}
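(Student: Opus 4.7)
The strategy is to apply Lemma~\ref{lem:unico_tipo} after carefully computing the asymptotic form of $\action(y|_{\{|y|\le R\}})$ for $y\in\{x,\hat x\}$ as $R\to\infty$. For $R$ large, let $[T_1,T_2]$ and $[\hat T_1,\hat T_2]$ be the intervals on which $|x|,|\hat x|\le R$, with the associated interaction times $t_*\le t_{**}$ and $\hat t_*\le\hat t_{**}$ supplied by Lemma~\ref{lem:descrizMorse}; each restriction then achieves the corresponding $\ve$-constrained Bolza minimum, so the structural results of Section~\ref{sec:Bolza} apply.

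I would first apply Lemma~\ref{lem:stima_azione} on the two outer intervals $[T_1,t_*]$ and $[t_{**},T_2]$: the sum of their actions equals $\tfrac{1}{\alpha_*}R(\dot r(T_2)-\dot r(T_1))-\tfrac{\ve^{\alpha_*}}{\alpha_*}\vjump(y)$ by direct computation using the jump definition. Lemma~\ref{lem:asintotiche}, combined with the non-degeneracy of $\xi^\pm$ (which gives $V(s(T_i))=V_{\min}+o(R^{-2\alpha_*})$ and, via energy conservation, the sharp estimate $R\,|\dot r(T_i)|=\sqrt{2V_{\min}}\,R^{\alpha_*}+o(1)$), then yields
\[
\action(y|_{[T_1,T_2]}) = \tfrac{2\sqrt{2V_{\min}}}{\alpha_*}R^{\alpha_*}-\tfrac{\ve^{\alpha_*}}{\alpha_*}\vjump(y)+\action(y|_{[t_*,t_{**}]})+o(1).
\]
Subtracting the expressions for $x$ and $\hat x$ and invoking Lemma~\ref{lem:unico_tipo}, the $R^{\alpha_*}$ terms cancel and one is left with
\[
\action(x|_{[t_*,t_{**}]})-\tfrac{\ve^{\alpha_*}}{\alpha_*}\vjump(x) = \action(\hat x|_{[\hat t_*,\hat t_{**}]})-\tfrac{\ve^{\alpha_*}}{\alpha_*}\vjump(\hat x).
\]

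By Lemma~\ref{lem:descrizMorse}(3) each minimizer satisfies $\pjump(y)\vjump(y)=0$, and the middle action $\action(y|_{[t_*,t_{**}]})$ is strictly positive in the position-jumping case (by the lower-bound computation behind Lemma~\ref{lem:est_from_below}) and vanishes in the velocity-jumping or parabolic case. A sign analysis of the above identity then forces $x$ and $\hat x$ to share the same type: the left-hand side is non-negative iff $x$ is not strictly velocity-jumping, and non-positive iff $x$ is not strictly position-jumping. In the velocity-jumping or parabolic subcase the middle actions vanish and I immediately read off $\vjump(x)=\vjump(\hat x)$ together with $\pjump(x)=\pjump(\hat x)=0$.

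The main obstacle is the remaining position-jumping case, in which $\vjump(x)=\vjump(\hat x)=0$ and one is left with the equality of middle actions $\action(x|_{[t_*,t_{**}]})=\action(\hat x|_{[\hat t_*,\hat t_{**}]})$. This does not by itself imply equality of the chord distances $\pjump$, because the middle segment is a geodesic on $\sphere$ in the Jacobi-type metric $\sqrt{2V(s)}$ whose length depends on the particular endpoints, not merely on their chord. To close the gap I would set up a cut-and-paste comparison: splice the middle of $\hat x$ into the trajectory of $x$ using short bridging arcs on $\sphere$ joining $s(t_*)$ to $\hat s(\hat t_*)$ and $\hat s(\hat t_{**})$ to $s(t_{**})$. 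Morse minimality of $x$ and $\hat x$ then provides complementary inequalities which, combined with the upper- and lower-bound estimates of Lemmata~\ref{lem:est_from_above} and~\ref{lem:est_from_below} on the bridges, should pin down $\pjump(x)=\pjump(\hat x)$. The technical heart is turning the sharp equality of middle geodesic actions into equality of the chord distances $|s(t_{**})-s(t_*)|$, which relies on the rigidity imposed by the apsidal boundary conditions $\dot r(t_*^-)=\dot r(t_{**}^+)=0$ linking the middle spherical geodesic to the outer zero-energy orbits asymptotic to $\xi^\pm$.
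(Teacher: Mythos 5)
Your treatment of the mixed-type and velocity-jumping cases is correct and is in substance the paper's own argument: everything rests on Lemma \ref{lem:unico_tipo} combined with the level estimates of Section \ref{sec:level_estimates} and the refined asymptotics of Lemma \ref{lem:asintotiche}. Your expansion $\action(y|_{[T_1,T_2]})=\frac{2\sqrt{2V_{\min}}}{\alpha_*}R^{\alpha_*}-\frac{\ve^{\alpha_*}}{\alpha_*}\vjump(y)+\action(y|_{[t_*,t_{**}]})+o(1)$ checks out (the lower bound on $R\dot r(T_i)$ comes from the monotonicity argument inside Lemma \ref{lem:stima_azione}, the matching upper bound from energy conservation plus the quadratic pinching of $V$ near $\xi^\pm$ and Lemma \ref{lem:asintotiche}); the paper reaches the same conclusions by pairing the lower bound of Lemma \ref{lem:stima_inf} for one minimizer with the upper bound of Lemma \ref{lem:stima_sup} for the other. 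Your sign analysis then correctly excludes the mixed case and gives $\vjump(x)=\vjump(\hat x)$ together with $\pjump(x)=\pjump(\hat x)=0$ whenever one of the two is not strictly position-jumping.

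The gap is exactly where you flag it, and it is not closed by your proposed repair. In the case where both minimizers are strictly position-jumping you only obtain equality of the middle actions $\action(x|_{[t_*,t_{**}]})=\action(\hat x|_{[\hat t_*,\hat t_{**}]})$; this is $\ve^{\alpha_*}$ times the Jacobi length $\int\sqrt{2V(s)}$ of a constrained geodesic arc on $\sphere$, not the chord $\pjump$, and the two are comparable only through the mismatched constants $\sqrt{2V_{\min}}$ and $\frac{\pi}{2}\sqrt{2V_{\max}}$ of Lemmata \ref{lem:est_from_below} and \ref{lem:est_from_above}, which yields $\pjump(x)\le C\,\pjump(\hat x)$ and conversely but not equality. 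The cut-and-paste you sketch cannot do better: splicing the middle of $\hat x$ into $x$ via bridging arcs from $s(t_*)$ to $\hat s(\hat t_*)$ and from $\hat s(\hat t_{**})$ to $s(t_{**})$ and invoking minimality of $x$ gives $\action(x|_{[t_*,t_{**}]})\le\action(\hat x|_{[\hat t_*,\hat t_{**}]})+(\text{bridge actions})$, which after cancelling the equal middle actions reduces to the vacuous statement that the bridge actions are nonnegative; and nothing forces $|s(t_*)-\hat s(\hat t_*)|$ to be small, so the bridges are not negligible errors. As written, then, the conclusion $\pjump(x)=\pjump(\hat x)$ in the position-jumping case is asserted rather than proved. (To be fair, the paper itself dispatches this last case with the single sentence ``can be ruled out in the same way,'' and the literal ``same way'' runs into the very same constant mismatch; you have isolated a genuine subtlety, but identifying it is not the same as resolving it.)
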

\begin{proof}
We recall that, for each trajectory, at least one jump must vanish.
Let us start considering the case in which
\[
\pjump(\hat x)=\vjump(x)=0.
\]
Taking into account the estimates in Lemmata \ref{lem:stima_sup} and \ref{lem:stima_inf}
we obtain that
\begin{multline*}
\action\left([T_1,T_2];x\right) - \action\left([\hat T_1,\hat T_2]; \hat x \right) \geq \\
- \frac{R^{\alpha_*}}{\alpha_*}
\left[ \sqrt{2V(\hat s(\hat T_2))} + \sqrt{2V(\hat s(\hat T_1))} - 2\sqrt{2V_{\min}} \right]\\
+ \ve^{\alpha_*} \left[ \sqrt{2V_{\min}}\pjump(x) +\frac{1}{\alpha_*}\vjump(\hat x) \right]
-\frac{2}{\alpha_*} \sqrt{2V_{\min}}\ve^{2\alpha_*}R^{-\alpha_*}.
\end{multline*}
By Lemma \ref{lem:unico_tipo} we have that the left hand side above goes to 0 as
$R \to +\infty$. By rearranging, we obtain, for a suitable $C>0$,
\begin{multline}\label{eq:unico_tipo_2}
C\left[\pjump(x)+\vjump(\hat x)\right]\ve^{\alpha_*}\\
 \leq  R^{\alpha_*}\left[ \sqrt{V(\hat s(\hat T_2))} - \sqrt{V_{\min}} + \sqrt{V(\hat s(\hat T_1))} - \sqrt{V_{\min}} \right] + o(1).
\end{multline}
Since when $t$ is sufficiently large (positive or negative),
\[
\sqrt{V(\hat s(t))} - \sqrt{V_{\min}} \leq  C|\hat s(t) - \xi^\pm|^2,
\]
using Lemma \ref{lem:asintotiche} we infer that,
choosing $R$ sufficiently large, the right hand side of the previous
inequality can be made arbitrarily small.
As a consequence, since the (nonnegative) left hand side of equation \eqref{eq:unico_tipo_2}
does not depend on $R$, we have that it must vanish, implying that
$\pjump(\hat x) = \vjump(x)= \pjump(x) = \vjump(\hat x) = 0$.

Coming to the case in which
\[
\pjump(x) = \pjump(\hat x) =0,
\]
one can argue exactly as above obtaining, instead of \eqref{eq:unico_tipo_2}
\[
C\left|\vjump(x)-\vjump(\hat x)\right|\ve^{\alpha_*} \leq  o(1),
\]
and concluding that $\vjump(x)=\vjump(\hat x)$.
Finally the case in which $\vjump(x) = \vjump(\hat x) =0$ can be ruled out in the same way.
\end{proof}
\begin{remark}\label{rem:hom2}
Given $V \in \Poh$, homotheticity induces
a one-to-one correspondence between
the sets of $\ve_1$-constrained Morse minimizers
and of $\ve_2$-constrained Morse minimizers, for any $\ve_1$, $\ve_2$.
Moreover the quantities $\pjump$ and $\vjump$ are invariant with respect
to this correspondence (see Remark \ref{rem:hom1}).
As a consequence, the jumps are independent not only of $x \in \morse$,
but also of $\ve$, and they depend only on $V$.
\end{remark}
Motivated by the previous remark we extend the definition of jumps to the potentials.
\begin{definition}\label{def:jumpV}
Let $V \in \Poh$, then
\begin{itemize}
\item $\pjump(V) := \pjump(x)$, for every $\eps>0$ and $x \in \morse$,
\item $\vjump(V) := \vjump(x)$, for every $\eps>0$ and $x \in \morse$.
\end{itemize}
\end{definition}
To conclude this section, we emphasize that Theorem \ref{teo:stab} allows us
to characterize $\pjump(V)$, $\vjump(V)$ in terms of approximating Bolza minimizers
and/or potentials.
\begin{remark}\label{rem:hom4}\label{rem:hom5}
Reasoning as in the proof of Lemma \ref{lem:Morse_not_empty},
we obtain that, if $V_n$ is a sequence in $\Poh$ with $V_n \to V$,
and $x_n$ is a Bolza $\ve$-constrained minimizer for $V_n$
(such that their endpoints satisfy $|x^{\pm}_{n}| \to +\infty$ and $x^{\pm}_{n} / |x^{\pm}_{n}| \to 	\xi^\pm$),
then, up to subsequences, $x_n$ converges to a Morse minimizer for $V$, and
\[
\lim_n \pjump(x_n) = \pjump(V).
\]
In particular,
\[
\lim_n \pjump(V_n) = \pjump(V)
\]
(similar equalities can be obtained for velocity jumps).
As a consequence, the set $\Poh$ can be written as disjoint union in the following way
\[
 \Poh = \{V: \pjump(V)>0\} \cup \{V: \vjump(V)>0\} \cup \{V: \pjump(V)=\vjump(V)=0\},
\]
where the first two sets are open.
\end{remark}
We remark that, up to now, the three sets above do not need to be all non-empty.
Furthermore, let us focus on potentials $V$ such that $\pjump(V)=\vjump(V)=0$;
while any corresponding constrained Morse minimizer
is a solution of the Euler-Lagrange equation \eqref{eq:dynsys} on the whole real line,
on the other hand at this moment we do not know whether, as we expect, it is also a free parabolic minimizer.

All these questions will find a positive answer in the next section.

\section{Parabolic Minimizers}\label{sec:ParMin}
%
The aim of this section is to investigate the relations between
the sets $\setin$, $\setout$, defined in the Introduction,
and the classification of potentials in terms of the jumps of their minimizers
(see Remark \ref{rem:hom4}). As a byproduct we will obtain all the main results we
presented in the Introduction.

We recall that the sets $\setin$ and $\setout$ were defined in terms of the (non constrained) Bolza level
\[
c(V) = \inf \left\{ \action\left([a,b];x\right) : a<b, \, x \in H^1(a,b), \, x(a)=\xi^-,\, x(b)=\xi^+  \right\}.
\]
It is then natural to go back to the tools introduced in Section \ref{sec:Bolza}
where we studied the constrained Bolza minimization problem \eqref{eq:minA}.
In the present situation we have that $x_1=\xi^-$ and $x_2=\xi^+$ are fixed.
On the contrary, besides the one on the constraint, we also want to consider the dependence on the potential. Accordingly,
we change our notation and, for any $V \in \Poh$ and $\ve <1$, we write
\[
m(V,\ve) :=  m(\xi^-,\xi^+,\ve)
\]
to denote the action level of $\ve$-constrained
minimizers connecting $\xi^-$ to $\xi^+$, obtaining that
\[
c(V) = \inf_{\eps \in (0,1]} m(V,\eps).
\]
We recall that (see Appendix \ref{app:stime})
\[
m(V,0) = 2\, \mathrm{hom}\left( 0, 1, V_{\min}\right)
              = \frac{2}{\alpha_*}\sqrt{2V_{\min}}.
\]
In what follows, a central role is played by the function $\gamma$
that we introduce in the following lemma.
\begin{lemma}\label{lem:gamma}
Let $\gamma : \, \Poh \times (0,1] \to \RR$ be defined as
\[
\gamma(V,\ve) :=  \frac{m(V,\ve)-m(V,0)}{\ve^{\alpha_*}}.
\]
Then:
\begin{enumerate}
	\item[(1)] $\gamma$ is continuous;
	\item[(2)] for every fixed $V$, $\gamma(V,\cdot)$ is increasing on $(0,1]$;
  \item[(3)] $\gamma(V,0^+): \, \Poh \to \{-\infty\}\cup\RR$ is (well defined and) upper semi-continuous.
\end{enumerate}
\end{lemma}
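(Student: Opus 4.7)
The plan is to prove the three claims in the order (1), (2), (3), with (3) following easily from the combination of (1) and (2). For (1), since $V_{\min}$ is fixed in $\Seh$, the quantity $m(V,0) = (2/\alpha_*)\sqrt{2V_{\min}}$ depends only on $\alpha$ and is trivially continuous on $\Poh$; hence continuity of $\gamma$ reduces to joint continuity of $m(V,\ve)$ on $\Poh \times (0,1]$. Lower semi-continuity is obtained by the standard compactness argument used in Lemma \ref{lem:Boltza_exists} and Lemma \ref{lem:Morse_not_empty}: if $(V_n,\ve_n) \to (V_0,\ve_0)$ and $\bar x_n$ achieves $m(V_n,\ve_n)$, then, after reparameterizing on $[-1,1]$ via Lemma \ref{lem:legameAJ}, a subsequence converges weakly in $H^1$ to an admissible $\bar x_0$ for $m(V_0,\ve_0)$, and the joint weak lower semi-continuity of the action closes the estimate. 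Upper semi-continuity follows by explicit competitor construction: starting from a minimizer $\bar x_0$ for $m(V_0,\ve_0)$, one rescales it (as in (2) below) and grafts short homothetic radial connectors to obtain admissible competitors for $m(V_n,\ve_n)$ whose action converges to $m(V_0,\ve_0)$.

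The monotonicity in (2) is the crucial technical step and admits a slick scaling argument. Fix $0 < \ve_1 < \ve_2 \leq 1$ and let $\bar x$ achieve $m(V,\ve_2)$. Set $\lambda := \ve_1/\ve_2 \in (0,1)$ and define the rescaled trajectory
\[
\tilde x(t) := \lambda\, \bar x\bigl(\lambda^{-(2+\alpha)/2}\, t\bigr).
\]
By the $-\alpha$-homogeneity of $V$ (cf.\ Remark \ref{rem:hom1}), $\tilde x$ is a zero-energy path with $\min|\tilde x|=\ve_1$, endpoints $\lambda\xi^\pm$, and action $\lambda^{\alpha_*} m(V,\ve_2)$. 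Because $\xi^\pm$ are minimal central configurations, the tangential gradient $\nabla_T V$ vanishes there, so the pure radial motions $[\lambda,1] \ni r \mapsto r\xi^\pm$ traversed with zero energy are solutions of the Euler--Lagrange equation, each with action $\mathrm{hom}(\lambda,1,V_{\min})= (\sqrt{2V_{\min}}/\alpha_*)(1-\lambda^{\alpha_*})$. Grafting these two segments onto the ends of $\tilde x$ yields a path $y$ in the admissible set for $m(V,\ve_1)$ (its minimum radius is $\min(\lambda,\ve_1)=\ve_1$ since $\lambda>\ve_1$) with total action
\[
\action(y) = \lambda^{\alpha_*} m(V,\ve_2) + m(V,0)\bigl(1 - \lambda^{\alpha_*}\bigr),
\]
equivalently $\action(y) - m(V,0) = \lambda^{\alpha_*}\bigl[m(V,\ve_2) - m(V,0)\bigr]$. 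Since $m(V,\ve_1) \leq \action(y)$, dividing by $\ve_1^{\alpha_*}$ gives $\gamma(V,\ve_1) \leq \gamma(V,\ve_2)$.

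Finally, (3) is a formal consequence of (1) and (2). By monotonicity, $\gamma(V,0^+) := \lim_{\ve \to 0^+} \gamma(V,\ve) = \inf_{\ve\in(0,1]} \gamma(V,\ve) \in \{-\infty\}\cup\RR$ is well defined (and bounded above by $\gamma(V,1)$). For upper semi-continuity at $V$, fix any $\eta > \gamma(V,0^+)$, or any $\eta\in\RR$ when $\gamma(V,0^+)=-\infty$; choose $\ve_0\in(0,1]$ with $\gamma(V,\ve_0) < \eta$; by (1), $\gamma(V_n,\ve_0) < \eta$ for all $n$ sufficiently large; by (2), $\gamma(V_n,0^+) \leq \gamma(V_n,\ve_0) < \eta$. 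Letting $\eta \searrow \gamma(V,0^+)$ yields $\limsup_n \gamma(V_n,0^+) \leq \gamma(V,0^+)$. The conceptual heart of the lemma is the scaling argument of (2); the remaining technical point is the upper semi-continuity in (1) when both $V_n$ and $\ve_n$ vary, which is handled by the same scaling construction applied fibrewise.
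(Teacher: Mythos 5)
Your proposal follows essentially the same route as the paper: part (1) via semicontinuity of $m$ (compactness for the lower bound, grafted competitors for the upper bound), part (2) via the rescaling $\bar x\mapsto\lambda\bar x(\lambda^{-(2+\alpha)/2}t)$ glued to two homothetic radial arcs, with the identical algebra $\action(y)-m(V,0)=\lambda^{\alpha_*}\bigl[m(V,\ve_2)-m(V,0)\bigr]$, and part (3) as a formal consequence of (1) and (2). The computations are correct.

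The one substantive difference is that you conclude only $\gamma(V,\ve_1)\leq\gamma(V,\ve_2)$, whereas the paper obtains the \emph{strict} inequality. It does so by observing that, by uniqueness of the Cauchy problem, the grafted path $y$ cannot be $\cont^1$ at the junction radius $\lambda=\ve_1/\ve_2$: if it were, the homothetic arc and the rescaled minimizer would solve the same Euler--Lagrange equation with the same data there, forcing $\bar x$ to be homothetic down to the constraint, which is incompatible with the structure of constrained minimizers (Corollary \ref{cor:convex} and Proposition \ref{propo:constraint}). Since minimizers of $m(V,\ve_1)$ are $\cont^1$ away from the constraint $\{|x|=\ve_1\}$, the competitor $y$ cannot be minimal and $m(V,\ve_1)<\action(y)$. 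This is not cosmetic: the strict monotonicity is invoked later in the proof of Proposition \ref{propo:alter} to rule out the case of a non-collision minimizer at the level $m(V,0)$, so you should add this corner argument to close the step.
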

\begin{proof}
(1) The continuity of $\gamma$ follows from the one of $m$ (on $\Poh \times [0,1]$)
which in turn is a standard consequence of the stability theorem of Appendix \ref{app:stab}.
We give a brief sketch of this last argument.
Let $(V_n,\ve_n) \to (V,\ve)$ and let $x_n$ achieve $m(V_n,\ve_n)$.
With a suitable time translation we can apply Theorem \ref{teo:stab}
(separately on suitable time intervals $[T_{1,n},t_{*,n}]$, $[t_{*,n},t_{**,n}]$,
$[t_{**,n},T_{2,n}]$), obtaining that $x_n\to \bar x$, and furthermore
$m(V_n,\ep_n)\to\action_V(\bar x)\geq m(V,\ep)$. If, by contradiction, the strict inequality holds,
it is possible to use $\hat x$, achieving $m(V,\ep)$, to construct a test path which strictly decreases
the value of $m(V_n,\ep_n)$, for $n$ sufficiently large.

(2) Let $V$ be fixed, and let $\bar x$ achieve $m(V,\ve_0)$. For any $\ve \in (0,\ve_0)$ we consider
\[
\bar x_{{\ve}/{\ve_0}}(t) = \frac{\ve}{\ve_0} \bar x \left( \left(\frac{\ve}{\ve_0}\right)^{\frac{2+\alpha}{2}}t \right),
\]
which connects ${\ve}\xi^-/{\ve_0}$ to ${\ve}\xi^+/{\ve_0}$.
Let us denote by $y_{\ve}$ the juxtaposition
of $\bar x_{{\ve}/{\ve_0}}$ with the two homothetic motions joining its
endpoints with $\xi^\pm$.
By uniqueness of the Cauchy problem we have that $y_{\ve}$ is not $\cont^1$, hence
\begin{equation}\label{eq:come_vuoi}
\begin{split}
m(V,\ve) & < \action \left( y_\ve \right)
         = 2\, \mathrm{hom}\left( \frac{\ve}{\ve_0}, 1, V_{\min}\right) + \action(\bar x_{{\ve}/{\ve_0}}) \\
       & = 2\, \left[\mathrm{hom}\left( 0, 1, V_{\min}\right) -
                     \mathrm{hom}\left( 0,\frac{\ve}{\ve_0}, V_{\min}\right) \right]
                              + \left( \frac{\ve}{\ve_0}\right)^{\alpha_*} \action\left(\bar x \right)\\
       & = m(V,0) + \left( \frac{\ve}{\ve_0}\right)^{\alpha_*}\left( m(V,\ve_0)-m(V,0) \right),
\end{split}
\end{equation}
which implies  $\gamma(V,\ve) < \gamma(V,\ve_0)$.

(3) By the already proved monotonicity we have that $\gamma(V,0^+)$ is well defined for every $V$
and that
\[
\gamma(V,0^+) = \inf_{\ve>0} \gamma(V,\ve).
\]
Therefore the upper semi-continuity of $\gamma(V,0^+)$ follows from the continuity
of $\gamma(V,\ve)$, $\ve>0$, with respect to $V$.
\end{proof}
We are now ready to prove Proposition \ref{propo:alter}.
\begin{proof}[Proof of Proposition \ref{propo:alter}.]
Suppose that $\bar x$ achieves $c(V)$ and that, for some $t$, $\bar x(t)=0$.
From Lemma \ref{lem:omo2} it follows straightforwardly
that $\bar x$ is the juxtaposition of two homothetic motions, the first connecting $\xi^-$ to
the origin and the second the origin to $\xi^+$. In particular $c(V)=2\hom(0,1,V_{\min})$.

On the other hand let us assume that $\bar x$, with $\min_t|\bar x(t)|=:\bar \ve>0$,
achieve $c(V)$, and let  us assume by contradiction that
$c(V) =  2\hom(0,1,V_{\min}) = m\left(V,0\right)$.
By definition we have, for every $\ve$, $c(V) = m(V,\bar\ve) \leq m(V,\ve) $
that implies
\[
\gamma(V,\ve) \geq \gamma(V,\bar\ve)=0
\]
in contradiction with the strict monotonicity of $\gamma$ (see Lemma \ref{lem:gamma}).

To conclude, we deduce the existence of a lower bound for the norm
of minimizers arguing by contradiction and using Theorem \ref{teo:stab}.
\end{proof}
We can give an equivalent definition of the sets $\setin$ and $\setout$ in terms of $\gamma$.
\begin{lemma}\label{lem:inout}
Let $\setin$ and $\setout$ be defined as in the Introduction.
Then
\[
	\setin = \left\{ V \in \Poh :\, c(V) = m(V,0)  \right\}
	       = \left\{ V \in \Poh :\, \gamma(V,0^+)\geq 0\right\},
\]
and
\[
	\setout = \left\{ V \in \Poh :\, c(V) < m(V,0)  \right\}
          =	\left\{ V \in \Poh : \gamma(V,0^+)< 0\right\}.
\]
Moreover, $\setin$ is closed while $\setout$ is open.
\end{lemma}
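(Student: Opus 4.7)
The plan is to reduce the proof to the definition of $\gamma$ and the three properties established in Lemma \ref{lem:gamma}. First I would rewrite the two defining conditions from the Introduction using the explicit value
\[
m(V,0)=2\,\mathrm{hom}(0,1,V_{\min})=\frac{2}{\alpha_*}\sqrt{2V_{\min}}=\frac{4\sqrt{2V_{\min}}}{2-\alpha},
\]
so that the equalities $\setin=\{V:\,c(V)=m(V,0)\}$ and $\setout=\{V:\,c(V)<m(V,0)\}$ hold tautologically, and only the characterizations via $\gamma$ remain.

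The main bridge is the identity $c(V)=\inf_{\ve\in(0,1]}m(V,\ve)$. I would prove it by comparing any admissible path $x$ for the problem defining $c(V)$ with its minimal radius $\ve_x\geq 0$: if $\ve_x>0$ then $x$ is admissible for the $\ve_x$-constrained Bolza problem, so $\action(x)\geq m(V,\ve_x)$; if instead $\ve_x=0$, then Lemma \ref{lem:omo2} applied to each of the two halves joining $\xi^\pm$ to the origin gives $\action(x)\geq 2\,\mathrm{hom}(0,1,V_{\min})=m(V,0)$. Conversely, the double-homothetic motion realises $m(V,0)$ and any $\ve$-constrained path is a fortiori unconstrained admissible, so $c(V)\leq\min\{m(V,0),\inf_{\ve>0}m(V,\ve)\}$. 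Combined with the continuity of $m(V,\cdot)$ at $0$ (item (1) of Lemma \ref{lem:gamma}), which forces $\inf_{\ve>0}m(V,\ve)\leq m(V,0)$, the identity follows.

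Rewriting this identity as
\[
c(V)-m(V,0)=\inf_{\ve\in(0,1]}\ve^{\alpha_*}\gamma(V,\ve),
\]
the two equivalences drop out from the monotonicity of $\gamma(V,\cdot)$, item (2) of Lemma \ref{lem:gamma}. Indeed, if $\gamma(V,0^+)\geq 0$ then by monotonicity $\gamma(V,\ve)\geq 0$ for every $\ve>0$, hence $c(V)\geq m(V,0)$ and therefore $c(V)=m(V,0)$; conversely, if $\gamma(V,0^+)<0$ then, again by monotonicity, some $\ve_0>0$ satisfies $\gamma(V,\ve_0)<0$, whence $c(V)\leq m(V,\ve_0)<m(V,0)$.

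Finally, the openness of $\setout=\{V:\,\gamma(V,0^+)<0\}$ follows immediately from the upper semi-continuity of $V\mapsto\gamma(V,0^+)$, item (3) of Lemma \ref{lem:gamma}, and $\setin=\Poh\setminus\setout$ is then closed. The only non-routine step is the identity $c(V)=\inf_{\ve>0}m(V,\ve)$, which hinges both on the homothetic lower bound for origin-crossing paths and on the continuity of $m$ at $\ve=0^+$; everything else is bookkeeping using results already in hand.
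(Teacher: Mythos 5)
Your proof is correct and follows essentially the same route as the paper: identify $m(V,0)$ with $4\sqrt{2V_{\min}}/(2-\alpha)$, use the identity $c(V)=\inf_{\ve}m(V,\ve)$ together with the monotonicity of $\gamma(V,\cdot)$ to translate the sign conditions on $c(V)-m(V,0)$ into sign conditions on $\gamma(V,0^+)$, and deduce the topological claims from the upper semi-continuity in Lemma \ref{lem:gamma}(3). The only difference is that you also justify the identity $c(V)=\inf_{\ve}m(V,\ve)$ (via Lemma \ref{lem:omo2} for origin-crossing paths and the continuity of $m$ at $\ve=0$), which the paper asserts in the text preceding Lemma \ref{lem:gamma} without detailed proof; this is a welcome but not structurally different addition.
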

\begin{proof}
Since $c(V)=\inf_\ve m(V,\ve)$, we have that
\[
c(V) < m(V,0) \quad \iff \quad
\gamma(V,\bar\eps)<0, \text{ for some } \bar \ve>0.
\]
Recalling Lemma \ref{lem:gamma} we obtain the equivalent definitions of $\setin$ and $\setout$.
Finally, their topological attributes are a consequence of the upper semi-continuity of $\gamma(V,0^+)$.
\end{proof}
As we mentioned, the decomposition of $\Poh$ in terms of $\setin$ and $\setout$ is strictly related
to the one induced by the jumps of the potentials (Remark \ref{rem:hom5}).
\begin{lemma}\label{lem:caratt_in_out}
Let $V \in \Poh$. Then
\[
V \in \setin \iff \vjump(V)=0 \qquad \text{(resp. $V \in \setout \iff \vjump(V)>0$).}
\]
\end{lemma}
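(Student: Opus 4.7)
The plan is to compute $\gamma(V,0^+)$ explicitly in terms of the jumps $\vjump(V)$ and $\pjump(V)$ by extracting two different asymptotic expansions, as $R\to\infty$, for the action of a single Morse minimizer restricted to a large interval, and matching them. Combined with Lemma \ref{lem:inout}, this will give both directions of the equivalence.

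Fix $\ve>0$, pick any $x\in\morse$, and for each large $R$ let $T_1^R<T_2^R$ satisfy $|x(T_i^R)|=R$; set $\bar x:=x|_{[T_1^R,T_2^R]}$. On the one hand, by homotheticity and Lemma \ref{lem:gamma},
\[
m(R\xi^-,R\xi^+,\ve)=R^{\alpha_*}m(V,\ve/R)=R^{\alpha_*}m(V,0)+\ve^{\alpha_*}\gamma(V,\ve/R),
\]
and $\gamma(V,\ve/R)\to\gamma(V,0^+)$ as $R\to\infty$. To replace the endpoints $R\xi^\pm$ by the actual $x(T_i^R)$, I would exploit the sharp rate $R\,|s(T_i^R)-\xi^\pm|=o(R^{\alpha/2})$ coming from Lemma \ref{lem:asintotiche}: a short correction arc between $x(T_i^R)$ and $R\xi^\pm$, written in zero-energy form so that its action equals $\int\sqrt{2V(\sigma)}\,d\ell$, has action at most $CR^{-\alpha/2}\cdot o(R^{\alpha/2})=o(1)$. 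Concatenating such corrections with the scaled minimizer of $m(R\xi^-,R\xi^+,\ve)$, and symmetrically, yields
\[
\action(\bar x)=R^{\alpha_*}m(V,0)+\ve^{\alpha_*}\gamma(V,0^+)+o(1)\qquad\text{as }R\to\infty.\quad(\ast)
\]

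Next, by Lemma \ref{lem:descrizMorse} the Bolza minimizer $\bar x$ is of type (A), (B), or (C) with jumps $\pjump(V)$, $\vjump(V)$. Applying Lemma \ref{lem:stima_sup} in type (C) and Lemma \ref{lem:stima_inf} in types (A), (B), and using Lemma \ref{lem:asintotiche} together with the non-degenerate minimum estimate $|\sqrt{V(s)}-\sqrt{V_{\min}}|\leq C|s-\xi^\pm|^2$ to absorb $R^{\alpha_*}(\sqrt{V(s(T_i^R))}-\sqrt{V_{\min}})$ into the $o(1)$ remainder, one obtains in type (C)
\[
\action(\bar x)=R^{\alpha_*}m(V,0)-\tfrac{1}{\alpha_*}\vjump(V)\ve^{\alpha_*}+o(1),
\]
while in types (A), (B),
\[
R^{\alpha_*}m(V,0)+\sqrt{2V_{\min}}\,\pjump(V)\,\ve^{\alpha_*}+o(1)\leq\action(\bar x)\leq R^{\alpha_*}m(V,0)+\tfrac{\pi\sqrt{2V_{\max}}}{2}\pjump(V)\,\ve^{\alpha_*}+o(1).
\]
Matching with $(\ast)$ and cancelling $\ve^{\alpha_*}$ forces $\gamma(V,0^+)=-\vjump(V)/\alpha_*<0$ in type (C), hence $V\in\setout$, and $\gamma(V,0^+)\geq\sqrt{2V_{\min}}\,\pjump(V)\geq 0$ in types (A), (B), hence $V\in\setin$. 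Lemma \ref{lem:inout} then closes both directions of the equivalence.

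The main obstacle is the asymptotic formula $(\ast)$: the rate $[r(t)]^{\alpha_*}|s(t)-\xi^\pm|\to 0$ of Lemma \ref{lem:asintotiche} is precisely sharp enough to push the endpoint-correction error below the $o(1)$ threshold, so that the first-order correction $\ve^{\alpha_*}\gamma(V,0^+)$ becomes visible against the divergent leading term $R^{\alpha_*}m(V,0)$; the non-degenerate minimum hypothesis on $V$ at $\xi^\pm$ built into the definition of $\Seh$ is exactly what yields this sharp rate.
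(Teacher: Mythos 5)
Your argument is correct, but it is organized around a different pivot than the paper's own proof. The paper never touches the tails of a Morse minimizer in this lemma: it takes the Bolza minimizers $\bar x_n$ achieving $m(V,1/n)$ (endpoints frozen at $\xi^\pm$, inner constraint shrinking), identifies their jumps with $\pjump(V)$, $\vjump(V)$ via the rescaling of Remark \ref{rem:hom1} and the convergence scheme of Lemma \ref{lem:Morse_not_empty}, and then runs two separate contradiction arguments: for $V\in\setin$ it applies Lemma \ref{lem:stima_sup} together with the strict monotonicity $\gamma(V,0^+)<\gamma(V,1/n)$; for $V\in\setout$ it combines Lemma \ref{lem:stima_inf} with the bound $\gamma(V,1/n)<(c(V)-m(V,0))/\bar\ve^{\alpha_*}<0$ coming from \eqref{eq:come_vuoi} to first force $\pjump(\bar x_n)=0$, and only then extracts $\vjump(V)>0$ from Lemma \ref{lem:stima_sup}. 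You instead fix one Morse minimizer, prove the expansion $(\ast)$ by the tail-correction argument of Lemma \ref{lem:unico_tipo} (which the paper proves but does not use here), and read $\gamma(V,0^+)$ off by matching asymptotics. This replaces the two case-by-case contradictions with a single computation and yields strictly more: the identity $\gamma(V,0^+)=-\vjump(V)/\alpha_*$ and the two-sided bound $\sqrt{2V_{\min}}\,\pjump(V)\le\gamma(V,0^+)\le\tfrac{\pi}{2}\sqrt{2V_{\max}}\,\pjump(V)$, facts the paper only recovers piecemeal (at the end of its own proof of this lemma and in step $[4.\implies 5.]$ of Theorem \ref{teo:equivalent}). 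The price is heavier reliance on Lemma \ref{lem:asintotiche} on both tails and on the full strength of Definition \ref{def:jumpV} (hence Proposition \ref{propo:unico_tipo} and Remark \ref{rem:hom2}). Two points worth making explicit if you write this up: the matching in $(\ast)$ does not presuppose $\gamma(V,0^+)>-\infty$ --- finiteness falls out because the other side of the match is finite by the lower bounds of Lemmas \ref{lem:stima_inf} and \ref{lem:stima_sup}; and the absorption of $R^{\alpha_*}\bigl(\sqrt{V(s(T_i^R))}-\sqrt{V_{\min}}\bigr)$ into $o(1)$ uses exactly the $\cont^2$-minimum estimate the paper invokes in Proposition \ref{propo:unico_tipo}, so it is legitimate. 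All steps check out.
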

\begin{proof}
For any $n \geq 1$ let $\bar x_n$ be any Bolza minimizer achieving $m(V,1/n)$,
and let $x_n(t) = n \bar x_n(n^{-(2+\alpha)/{2}}t)$.
On one hand, by Remark \ref{rem:hom1}, we have that
$\pjump(x_n) = \pjump(\bar x_n)$ and $\vjump(x_n) = \vjump(\bar x_n)$;
on the other hand, recalling the proof of Lemma \ref{lem:Morse_not_empty},
we have that $x_n$ converges to a constrained Morse minimizer.
This allows us to relate the jumps of $\bar x_n$ with the ones of $V$
(recall Remark \ref{rem:hom4}).

Now let us assume that $V \in \setin$, and, by contradiction, that
$\pjump(\bar x_n)=0$ (a contradiction will imply that $0=\vjump(\bar x_n) \to \vjump(V)$).
Then we can use Lemma \ref{lem:stima_sup} obtaining
\[
m(V,1/n) \leq m(V,0) - \frac{1}{\alpha_*} \vjump(\bar x_n) \frac{1}{n^{\alpha_*}}
\]
hence
\[
\gamma (V,0^+) < \gamma (V,1/n) \leq - \frac{1}{\alpha_*} \vjump(\bar x_n) \leq 0,
\]
in contradiction with the definition of $\setin$, see Lemma \ref{lem:inout}.

Analogously, let $V \in \setout$, and, by contradiction, suppose that
$\vjump(\bar x_n)=0$. On one hand, Lemma \ref{lem:stima_inf} applies, yielding
\[
\gamma (V,1/n) \geq \sqrt{2V_{\min}} \left( \pjump(\bar x_n) -\frac{2}{\alpha_*}\frac{1}{n^{\alpha_*}}\right);
\]
on the other hand, since $c(V)=m(V,\bar \eps)$ for some $\bar \eps>0$,
for $n$ large, equation \eqref{eq:come_vuoi} holds so that
\[
\gamma (V,1/n)  < \frac{c(V)-m(V,0)}{\bar \ve^{\alpha_*}} < 0, \quad \text{independent on $n$}.
\]
For $n$ large the last two inequalities imply $\pjump(\bar x_n)<0$, a contradiction. Resuming, we have proved that
\[
V \in \setout \qquad \implies \qquad \pjump(\bar x_n)=\pjump(V)=0;
\]
therefore we are allowed to apply Lemma \ref{lem:stima_sup} in order to obtain
\[
\gamma (V,1/n)  \geq -\frac{1}{\alpha_*}\vjump(x_n) - \frac{2\sqrt{2V_{\min}}}{\alpha_*} \frac{1}{n^{\alpha_*}}.
\]
Passing to the limit, as $n \to +\infty$, we finally obtain
$\vjump(V) \geq -\alpha_* \gamma (V,0^+)>0$, indeed, by assumption,
$V \in \setout$ and hence $\gamma (V,0^+)<0$.
\end{proof}
\begin{corollary}\label{coro:6.5}
Let $V$ be such that $\pjump(V)=\vjump(V)=0$, $\ve>0$ be small and $x_{\ve}$ achieve $m(V,\ve)$.
What we actually proved is that both $\vjump(x_\ve)=0$ and $0<\pjump(x_\ve) \to 0$.
\end{corollary}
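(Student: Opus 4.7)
My plan is to present this as a reorganization of observations already established during the proof of Lemma~\ref{lem:caratt_in_out}, split into the two independent sub-claims ``$\vjump(x_\ve)=0$ and $\pjump(x_\ve)>0$'' and ``$\pjump(x_\ve)\to 0$'', treated in that order.

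For the first, the hypothesis $\vjump(V)=0$ together with Lemma~\ref{lem:caratt_in_out} gives $V\in\setin$, i.e.\ $\gamma(V,0^+)\geq 0$; combining with the \emph{strict} monotonicity of $\gamma(V,\cdot)$ proved in Lemma~\ref{lem:gamma}(2), one gets
\[
\gamma(V,\ve)>\gamma(V,0^+)\geq 0\qquad\text{for every }\ve>0.
\]
By Proposition~\ref{propo:constraint} each $x_\ve$ belongs to one of the three types (A), (B), (C) of Definition~\ref{defi:type_bdd_int}. I would rule out (B) and (C) in one shot: both are characterized by $\pjump(x_\ve)=0$, so Lemma~\ref{lem:stima_sup} applies and delivers
\[
m(V,\ve)\leq m(V,0)-\tfrac{1}{\alpha_*}\vjump(x_\ve)\,\ve^{\alpha_*}\leq m(V,0),
\]
hence $\gamma(V,\ve)\leq 0$, contradicting the strict lower bound just derived. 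So $\pjump(x_\ve)>0$, and then Proposition~\ref{propo:constraint} forces $\vjump(x_\ve)=0$.

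For the second claim I would argue by rescaling. Pick any $\ve_n\to 0^+$ and set $y_n(t):=\ve_n^{-1}\,x_{\ve_n}(\ve_n^{(2+\alpha)/2}t)$. Then $y_n$ is a $1$-constrained Bolza minimizer with endpoints $\ve_n^{-1}\xi^\pm$, whose norms diverge. By the scaling invariance of the jumps recorded in Remark~\ref{rem:hom1}, $\pjump(y_n)=\pjump(x_{\ve_n})$. Reproducing the compactness argument from the proof of Lemma~\ref{lem:Morse_not_empty}, a subsequence converges (in $H^1_{\mathrm{loc}}$, pointwise, and in $\cont^2$ away from the interaction with the constraint) to some $y_\infty\in\morse$. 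Definition~\ref{def:jumpV} and the standing assumption $\pjump(V)=0$ then give $\pjump(y_\infty)=0$; the stability of the position-jump under this kind of limit, already used in Remark~\ref{rem:hom4}, finally yields $\pjump(y_n)\to 0$, and hence $\pjump(x_{\ve_n})\to 0$.

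The only delicate point is the strictness in the contradiction of step one: if one only knew non-strict monotonicity of $\gamma$, the argument would fail precisely in case (B), where $\vjump$ vanishes too and Lemma~\ref{lem:stima_sup} delivers only the non-strict bound $m(V,\ve)\leq m(V,0)$. The strict monotonicity in Lemma~\ref{lem:gamma}(2), itself obtained by exploiting the failure of $\cont^1$-regularity at the junction of the rescaled minimizer with the homothetic tails, is exactly what makes the mechanism close.
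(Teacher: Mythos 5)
Your argument is correct and follows essentially the same route as the paper: the corollary is exactly the content of the proof of Lemma~\ref{lem:caratt_in_out} in the case $V\in\setin$, namely the contradiction via Lemma~\ref{lem:stima_sup} combined with the strict monotonicity of $\gamma(V,\cdot)$ to force $\pjump(x_\ve)>0$ and $\vjump(x_\ve)=0$, plus the rescaling/compactness argument of Lemma~\ref{lem:Morse_not_empty} and Remark~\ref{rem:hom4} to get $\pjump(x_\ve)\to\pjump(V)=0$. Your closing observation about where strictness of the monotonicity is genuinely needed (to exclude case (B)) is accurate and matches the mechanism the paper relies on.
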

\begin{corollary}\label{coro:6.4}
Let $V \in \Pi:= \partial{\setin} \cap \partial{\setout}$, then $\pjump(V)=\vjump(V)=0$
(recall Remark \ref{rem:hom5}).
\end{corollary}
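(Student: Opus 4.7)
The plan is to deduce both equalities from the topological characterizations of $\setin$ and $\setout$ already established, combined with the openness of $\{\pjump>0\}$ from Remark \ref{rem:hom5} and the Morse dichotomy that at most one jump can be positive (Lemma \ref{lem:descrizMorse}, point 3).

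First I would handle $\vjump(V)$. Since $V\in\partial\setin$ and $\setin$ is closed by Lemma \ref{lem:inout}, we have $V\in\setin$. Then Lemma \ref{lem:caratt_in_out} gives $\vjump(V)=0$ immediately, with no further work needed.

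Next, to obtain $\pjump(V)=0$, I would argue by contradiction. Suppose $\pjump(V)>0$. By Remark \ref{rem:hom5}, the set $\{W\in\Poh:\pjump(W)>0\}$ is open, so there is an open neighborhood $\mathcal U$ of $V$ on which $\pjump>0$. Now the dichotomy in Lemma \ref{lem:descrizMorse} says that any Morse minimizer (and hence, by Definition \ref{def:jumpV}, any potential in $\Poh$) has at least one vanishing jump; consequently $\{\pjump>0\}\cap\{\vjump>0\}=\emptyset$. Thus $\vjump\equiv 0$ on $\mathcal U$, which by Lemma \ref{lem:caratt_in_out} means $\mathcal U\subseteq\setin$ and $\mathcal U\cap\setout=\emptyset$. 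This contradicts $V\in\partial\setout$, which forces every neighborhood of $V$ to meet $\setout$. Therefore $\pjump(V)=0$.

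No step looks genuinely hard here — the real content was front-loaded into Lemma \ref{lem:caratt_in_out} and Remark \ref{rem:hom5}. The only delicate point is invoking the openness of $\{\pjump>0\}$, which relies on the continuity of jumps under $H^1_{\mathrm{loc}}$-convergence (Remark \ref{rem:hom4}); this is what allows the neighborhood $\mathcal U$ to be extracted and then cleanly separated from $\setout$ via the Morse dichotomy.
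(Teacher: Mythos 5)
Your argument is correct and is precisely the one the paper intends by its pointer to Remark \ref{rem:hom5}: closedness of $\setin$ gives $\vjump(V)=0$ via Lemma \ref{lem:caratt_in_out}, and openness of $\{\pjump>0\}$ together with its disjointness from $\setout=\{\vjump>0\}$ rules out $\pjump(V)>0$ for a boundary point of $\setout$. Nothing to add.
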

Our aim is to prove that in fact $\Pi$
coincides with the set of potentials such that $\pjump(V)=\vjump(V)=0$.
A key result in this direction is the following one.
\begin{lemma}\label{lem:mono_gamma}
Let $V \in \Seh$, and $0<\alpha_1<\alpha_2<2$.
Then
\[
\gamma \left( (V,\alpha_2),0^+ \right) - \gamma \left( (V,\alpha_1),0^+ \right) \leq
-\frac{4\sqrt{2V_{\min}}}{(2-\alpha_1)^2}(\alpha_2-\alpha_1).
\]
\end{lemma}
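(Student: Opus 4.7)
The inequality to prove is precisely the differential statement that the map $\alpha\mapsto\gamma((V,\alpha),0^+)$ has slope no larger than $f'(\alpha_1)=-\frac{4\sqrt{2V_{\min}}}{(2-\alpha_1)^2}$, where $f(\alpha):=-\frac{4\sqrt{2V_{\min}}}{2-\alpha}=-m((V,\alpha),0)$. Since $f$ is concave in $\alpha$ (one checks $f''(\alpha)=-8\sqrt{2V_{\min}}/(2-\alpha)^3<0$), one has
\[
f(\alpha_2)-f(\alpha_1)\leq f'(\alpha_1)(\alpha_2-\alpha_1)=-\frac{4\sqrt{2V_{\min}}}{(2-\alpha_1)^2}(\alpha_2-\alpha_1),
\]
so the proof reduces to exhibiting an arc $\bar\sigma_\eps$ on $\sphere$ such that, to leading order in $\eps$,
\[
\gamma((V,\alpha_i),\eps)=\sqrt{2}\,L(\bar\sigma_\eps)+f(\alpha_i)+o_\eps(1),\qquad i=1,2,
\]
where $L(\sigma):=\int_\sigma\sqrt{U(\sigma)}\,d\theta$ is the $\sqrt U$-length of $\sigma$.

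\textbf{Step 1 (upper bound via a bouncing-homothetic competitor).} For any $\sigma:[0,1]\to\sphere$ with $\sigma(0)=\xi^-$, $\sigma(1)=\xi^+$, consider the path $\chi^\sigma_\eps$ obtained by radial descent from $\xi^-$ to $\eps\xi^-$, followed by the arc $\eps\sigma$ on the sphere of radius $\eps$, followed by radial ascent to $\xi^+$. A direct computation (radial parts via Lemma \ref{lem:omo1}, arc part via energy conservation at radius $\eps$) gives
\[
\action_{\alpha}(\chi^\sigma_\eps)=2\,\mathrm{hom}(\eps,1,V_{\min})+\eps^{\alpha_*}\sqrt 2\,L(\sigma)=m((V,\alpha),0)+\eps^{\alpha_*}\bigl[\sqrt 2\,L(\sigma)+f(\alpha)\bigr],
\]
so that $\gamma((V,\alpha),\eps)\leq \sqrt 2\,L(\sigma)+f(\alpha)$ for every admissible $\sigma$ and every $\eps\in(0,1]$.

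\textbf{Step 2 (matching lower bound at $\alpha_1$).} Let $\bar x_\eps$ achieve $m((V,\alpha_1),\eps)$ and let $\bar\sigma_\eps$ be its angular projection on the interaction interval with the constraint. Using Lemma \ref{lem:est_from_below} (splitting the action into its two radial portions and the constraint interval, on which the angular motion contributes exactly $\eps^{\alpha_{*,1}}\sqrt 2\,L(\bar\sigma_\eps)$ via the same Maupertuis computation as in Step 1), and controlling the geometry of the radial parts through the asymptotic analysis of the minimizer as $\eps\to 0^+$, one obtains
\[
m((V,\alpha_1),\eps)\geq 2\,\mathrm{hom}(\eps,1,V_{\min})+\eps^{\alpha_{*,1}}\sqrt 2\,L(\bar\sigma_\eps)-o(\eps^{\alpha_{*,1}}),
\]
that is $\gamma((V,\alpha_1),\eps)\geq \sqrt 2\,L(\bar\sigma_\eps)+f(\alpha_1)-o_\eps(1)$.

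\textbf{Step 3 (combine and pass to the limit).} Applying Step 1 at exponent $\alpha_2$ with the same arc $\sigma=\bar\sigma_\eps$ gives $\gamma((V,\alpha_2),\eps)\leq \sqrt 2\,L(\bar\sigma_\eps)+f(\alpha_2)$. Subtracting this from the estimate in Step 2 makes the term $\sqrt 2\,L(\bar\sigma_\eps)$ drop out, leaving
\[
\gamma((V,\alpha_2),\eps)-\gamma((V,\alpha_1),\eps)\leq f(\alpha_2)-f(\alpha_1)+o_\eps(1).
\]
Taking $\eps\to 0^+$ (invoking Lemma \ref{lem:gamma}(3) to commute the infimum in $\eps$ with the limit) and using concavity of $f$ completes the argument.

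\textbf{Main obstacle.} The delicate step is the lower bound in Step 2: the minimizer $\bar x_\eps$ is not a priori of bouncing-homothetic shape, and one must argue that, as $\eps\to 0^+$, its radial profile is asymptotically homothetic along $\xi^\pm$ while its angular excursion concentrates on the constraint interval. A compactness argument based on Theorem \ref{teo:stab}, together with the decomposition in Lemma \ref{lem:stima_azione}, should suffice, but the uniform-in-$\eps$ control of the error $o(\eps^{\alpha_{*,1}})$ is the technical core.
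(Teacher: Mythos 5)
Your Step 1 is correct: the bouncing competitor (radial descent, spherical arc at radius $\eps$, radial ascent) does give $\gamma((V,\alpha),\eps)\leq\sqrt2\,L(\sigma)+f(\alpha)$ for every arc $\sigma$ \emph{joining $\xi^-$ to $\xi^+$}. The argument breaks at the junction of Steps 2 and 3. The arc $\bar\sigma_\eps$ you extract in Step 2 is the angular projection of the minimizer on the constraint interval $[t_*,t_{**}]$ only; its endpoints are $\bar s(t_*)$ and $\bar s(t_{**})$, which do \emph{not} converge to $\xi^\pm$ as $\eps\to0^+$ (after rescaling, the minimizers converge to a constrained Morse minimizer, whose touching directions differ from $\xi^\pm$ because a nontrivial part of the angular excursion takes place on the unbounded arcs), and in the velocity-jumping regime $t_*=t_{**}$, so $\bar\sigma_\eps$ degenerates to a point. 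Hence $\bar\sigma_\eps$ is not admissible in Step 1 at exponent $\alpha_2$, and the cancellation of $\sqrt2\,L(\bar\sigma_\eps)$ in Step 3 has no basis. The natural repair --- replacing $\bar\sigma_\eps$ by the full angular projection from $\xi^-$ to $\xi^+$ --- destroys Step 2 instead: you would then be charging the minimizer the full radial cost $2\,\mathrm{hom}(\eps,1,V_{\min})$ \emph{plus} the full angular $\sqrt U$-length at rate $\eps^{\alpha_*}$, which is false because the Jacobi length satisfies $\sqrt{a^2+b^2}<a+b$ strictly, with a deficit of order $\eps^{\alpha_*}$ (not $o(\eps^{\alpha_*})$) wherever radial and angular speeds are comparable. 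More structurally: if your scheme worked it would prove the \emph{identity} $\gamma((V,\alpha),0^+)=\sqrt2\,\mathrm{dist}_{\sqrt U}(\xi^-,\xi^+)+f(\alpha)$, i.e.\ that $\gamma((V,\alpha),0^+)-f(\alpha)$ is independent of $\alpha$; this is incompatible with the regime $V\in\setout$, where $\gamma(V,0^+)=-\vjump(V)/\alpha_*$ is governed by the apsidal geometry of a velocity-jumping minimizer rather than by the spherical geodesic distance.

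The paper avoids all of this by never decomposing the minimizer geometrically. It takes the single minimizer $\bar x$ of $m((V,\alpha_1),\eps)$ and, for each $\alpha>\alpha_1$, reparameterizes it in time (Maupertuis, $\dtheta/\dt=\bar r^{(\alpha-\alpha_1)/2}$) so that it becomes a zero-energy competitor for the $\alpha$-problem with action $\int_0^{\bar T}2V(\bar s)\bar r^{-(\alpha+\alpha_1)/2}\dt$. Setting $f(\alpha):=\eps^{-\alpha_*}\left[\action_\alpha(y_\alpha)-m((V,\alpha),0)\right]$, one computes $f'(\alpha)$ explicitly and bounds it by $-\sqrt{2V_{\min}}/\alpha_*^2$ using only $V\geq V_{\min}$, the pointwise inequality $|\dot{\bar x}|\geq|\dot{\bar r}|$, and the change of variable $r=\bar r(t)$ on the two monotonicity intervals of $\bar r$. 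Since $f(\alpha_1)=\gamma((V,\alpha_1),\eps)$ and $f(\alpha_2)\geq\gamma((V,\alpha_2),\eps)$, the mean value theorem gives the claim at each fixed $\eps$, and one lets $\eps\to0^+$. If you want to salvage your approach, you would have to prove a sharp lower bound on $m((V,\alpha_1),\eps)$ matching the bouncing competitor to within $o(\eps^{\alpha_{*}})$, which, as explained above, is not true in general; the reparameterization trick is the way around this.
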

\begin{proof}
Let us fix $\ve>0$ and let $\bar x = \bar r \bar s$, defined on $[0, \bar T]$ for some $\bar T$,
achieve $m((V,\alpha_1),\ve)$.
For any $\alpha \in (\alpha_1,2)$,
we consider the following  reparameterization of the path $\bar x$
\[
y_\alpha (\vartheta) := \bar x(t(\vartheta)), \quad
\text{where } \vartheta \in [0,T_\alpha],  \text{ for some $T_\alpha$, and }
\frac{\dtheta}{\dt} = \bar r^{(\alpha-\alpha_1)/2}.
\]
We have that $y_\alpha = r_\alpha s_\alpha$
satisfies (see Lemma \ref{lem:maup})
\[
\frac12 \left| y'_\alpha \right|^2 - \frac{V(s_\alpha)}{r_\alpha^{\alpha}} =0, \quad \text{on } [0,T_\alpha]
\]
(here `` $'$ '' denote the derivative with respect to $\vartheta$).
Let us define the function
\[
f(\alpha) := \frac{\action_\alpha(y_\alpha) - m((V,\alpha),0)}{\ve^{\alpha_*}},
\]
where
\[
\action_\alpha(y_\alpha) =
\int_0^{T_\alpha} \left(\frac12 \left| y'_\alpha \right|^2 +
\frac{V(s_\alpha)}{r_\alpha^{\alpha}}\right) \dtheta =
\int_0^{T_\alpha} \frac{2V(s_\alpha)}{r_\alpha^{\alpha}}\, \dtheta =
\int_0^{\bar T}   \frac{2V(\bar s)}{\bar r^{(\alpha+\alpha_1)/2}}\, \dt.
\]
Computing the derivative of $f$ we obtain
\[
f'(\alpha) = \frac{1}{\ve^{\alpha_*}}
             \left\{ -\frac12 \int_{0}^{\bar T} \frac{2V(\bar s)}{\bar r^{(\alpha+\alpha_1)/2}}
                                                 \log\frac{\bar r}{\ve}\,\dt
                     - \sqrt{2V_{\min}}\, \left( \frac{1}{\alpha_*^2}+\frac{\log\ve}{\alpha_*} \right) \right\}.
\]
In order to estimate $f'(\alpha)$ we remark that, since
${\sqrt{2V(\bar s)}}\,{\bar r^{-\alpha_1/2}}=|\dot{\bar x}|\geq |\dot{\bar r}|$,
the following inequality holds
\begin{multline*}
\int_{0}^{\bar T} \frac{2V(\bar s)}{\bar r^{(\alpha+\alpha_1)/2}} \log\frac{\bar r}{\ve}\,\dt \geq
\sqrt{2V_{\min}}\,
\int_{0}^{\bar T} \frac{\sqrt{2V(\bar s)}}{\bar r^{(\alpha+\alpha_1)/2}} \log\frac{\bar r}{\ve}\,\dt \\
\geq
\sqrt{2V_{\min}}\, \int_{0}^{\bar T} \frac{|\dot{\bar r}|}{\bar r^{\alpha/2}} \log\frac{\bar r}{\ve}\,\dt;
\end{multline*}
the convexity of ${\bar r}^2$ allows the change of variable
$r=\bar r(t)$ on the two monotonicity intervals of $\bar r(t)$,
obtaining
\[
\int_{0}^{\bar T} \frac{|\dot{\bar r}|}{\bar r^{\alpha/2}} \log\frac{\bar r}{\ve}\,\dt =
2\int_{\ve}^{1} \frac{1}{r^{\alpha/2}} \log\frac{r}{\ve}\,\dr =
2\left( -\frac{\log \ve}{\alpha_*} -\frac{1}{\alpha_*^2} +\frac{\ve^{\alpha_*}}{\alpha_*^2}\right);
\]
hence
\[
f'(\alpha) \leq -\frac{\sqrt{2V_{\min}}}{\alpha_*^2} = -\frac{4\sqrt{2V_{\min}}}{(2-\alpha)^2}.
\]
Now, since
$\action_{\alpha_1}(y_{\alpha_1}) = m((V,\alpha_1),\ve)$ and
$\action_{\alpha_2}(y_{\alpha_2}) \geq m((V,\alpha_2),\ve)$, we infer that,
for a suitable $\xi \in (\alpha_1,\alpha_2)$,
\[
\begin{split}
\gamma \left( (V,\alpha_2),\ve\right) - \gamma \left( (V,\alpha_1),\ve\right)
& \leq f(\alpha_2)-f(\alpha_1) = f'(\xi)(\alpha_2-\alpha_1) \\
& \leq -\frac{4\sqrt{2V_{\min}}}{(2-\xi)^2}(\alpha_2-\alpha_1)
  \leq -\frac{4\sqrt{2V_{\min}}}{(2-\alpha_1)^2}(\alpha_2-\alpha_1).
\end{split}
\]
The proposition follows by taking the limit as $\ve \to 0^+$.
\end{proof}
\begin{corollary}
Let us fix $V \in \Seh$. Both
\[
\gamma((V,0^+),0^+):= \lim_{\alpha \to 0^+} \gamma((V,\alpha),0^+)
\]
and
\[
\gamma((V,2^-),0^+):= \lim_{\alpha \to 2^-} \gamma((V,\alpha),0^+)
\]
are well defined and
\[
\gamma((V,2^-),0^+) = -\infty.
\]
\end{corollary}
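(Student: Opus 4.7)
The plan is to deduce the corollary directly from the differential-type estimate in Lemma \ref{lem:mono_gamma}. First, I observe that the lemma in particular says that, for any $V \in \Seh$, the map $\alpha \mapsto \gamma((V,\alpha),0^+)$ is strictly decreasing on $(0,2)$. Being monotone, it therefore admits limits both as $\alpha \to 0^+$ (in $\RR\cup\{+\infty\}$) and as $\alpha \to 2^-$ (in $\RR\cup\{-\infty\}$), which gives the well-definedness of $\gamma((V,0^+),0^+)$ and $\gamma((V,2^-),0^+)$ for free.

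To show that the second limit equals $-\infty$, the idea is to upgrade the finite-difference inequality in Lemma \ref{lem:mono_gamma} to its ``integrated'' version. Fix $\alpha_0\in (0,2)$, take $\alpha\in(\alpha_0,2)$, and, for $N\in\NN$, let $h:=(\alpha-\alpha_0)/N$ and $\alpha_k:=\alpha_0+kh$. Applying the lemma on each consecutive pair $(\alpha_k,\alpha_{k+1})$ and summing telescopically yields
\[
\gamma((V,\alpha),0^+) - \gamma((V,\alpha_0),0^+) \leq -4\sqrt{2V_{\min}} \sum_{k=0}^{N-1} \frac{h}{(2-\alpha_k)^2}.
\]
Since the integrand $\tilde\alpha \mapsto 1/(2-\tilde\alpha)^2$ is continuous on the compact interval $[\alpha_0,\alpha]\subset (0,2)$, the right-hand side is a left-endpoint Riemann sum converging, as $N\to\infty$, to the corresponding integral. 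Passing to the limit gives
\[
\gamma((V,\alpha),0^+) \leq \gamma((V,\alpha_0),0^+) - 4\sqrt{2V_{\min}}\int_{\alpha_0}^{\alpha}\frac{\d\tilde\alpha}{(2-\tilde\alpha)^2}
= \gamma((V,\alpha_0),0^+) - 4\sqrt{2V_{\min}}\left(\frac{1}{2-\alpha}-\frac{1}{2-\alpha_0}\right).
\]

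To conclude, I simply let $\alpha\to 2^-$ with $\alpha_0$ kept fixed: since $\gamma((V,\alpha_0),0^+)$ is a finite constant (or at worst $+\infty$, but in that case the monotonicity alone still drives $\gamma$ down, and one can first replace $\alpha_0$ by a slightly larger value where $\gamma$ is finite by the strict monotonicity) and $1/(2-\alpha)\to +\infty$, the right-hand side diverges to $-\infty$, forcing $\gamma((V,2^-),0^+)=-\infty$. The only delicate point, and essentially the main (mild) obstacle, is the passage from the telescoped finite sum to the integral: this is routine because the integrand is continuous on $[\alpha_0,\alpha]$, but it is the step that turns the purely qualitative monotonicity of $\gamma$ into the quantitative blow-up needed for the $-\infty$ conclusion.
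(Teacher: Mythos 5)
Your proof is correct and takes essentially the same route as the paper: both arguments get well-definedness of the two limits from the monotonicity contained in Lemma \ref{lem:mono_gamma}, and both obtain $\gamma((V,2^-),0^+)=-\infty$ by accumulating the one-step decrement $4\sqrt{2V_{\min}}\,(2-\alpha_1)^{-2}(\alpha_2-\alpha_1)$ into a quantity that diverges as $\alpha\to 2^-$. The only difference is bookkeeping: you telescope over a partition and pass to the Riemann-sum limit, arriving at $\gamma((V,\alpha),0^+)\leq\gamma((V,\alpha_0),0^+)-4\sqrt{2V_{\min}}\left(\frac{1}{2-\alpha}-\frac{1}{2-\alpha_0}\right)$, whereas the paper integrates the inequality in $\alpha$ and divides by the interval length, getting a weaker (logarithmic) but equally sufficient divergence; note only that the codomain of $\gamma(\cdot,0^+)$ is $\{-\infty\}\cup\RR$ (never $+\infty$, being an infimum of real numbers), and if some intermediate value in your telescoping equals $-\infty$ the conclusion already follows from monotonicity alone.
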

\begin{proof}
The limits are well defined by monotonicity. Moreover, for every $\alpha \in (\alpha_1,\alpha_2)$,
there holds
\begin{multline*}
(\alpha_2-\alpha_1) \gamma((V,\alpha_1),0^+)
\geq \int_{\alpha_1}^{\alpha_2}\gamma((V,\alpha),0^+)\d\alpha\\
\geq (\alpha_2-\alpha_1) \gamma((V,\alpha_2),0^+)
     + C\int_{\alpha_1}^{\alpha_2}  \frac{\alpha_2-\alpha}{(2-\alpha)^2} \d\alpha. \qedhere
\end{multline*}
\end{proof}
\begin{corollary}\label{coro:defbaralpha}
Let $\Sigma := \left\{ V \in \Seh : \gamma((V,0^+),0^+)>0 \right\}$. Then
for every $V \in \Sigma$ there exists exactly one $(0,2)\ni\alpha := \bar \alpha(V)$
such that $\gamma((V,\bar \alpha(V)),0^+)=0$.
\end{corollary}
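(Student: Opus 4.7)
The corollary asserts existence and uniqueness of a zero of $\phi(\alpha):=\gamma((V,\alpha),0^+)$ on $(0,2)$ for $V\in\Sigma$. Uniqueness is immediate from the strict monotonicity given by Lemma~\ref{lem:mono_gamma}, which provides the quantitative decrease rate
\[
\phi(\alpha_2)-\phi(\alpha_1)\;\leq\; -\frac{4\sqrt{2V_{\min}}}{(2-\alpha_1)^2}(\alpha_2-\alpha_1)\qquad\text{for }\alpha_1<\alpha_2,
\]
so $\phi$ cannot take the value zero at two distinct points.

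For existence I would set $\bar\alpha:=\sup\{\alpha\in(0,2):\phi(\alpha)\geq 0\}$. Since $V\in\Sigma$ ensures $\phi(\alpha)>0$ for $\alpha$ sufficiently close to $0$, while the preceding corollary gives $\phi(\alpha)\to -\infty$ as $\alpha\to 2^-$, one has $\bar\alpha\in(0,2)$. Upper semicontinuity of $\phi$ (Lemma~\ref{lem:gamma}(3)) makes the set $\{\phi\geq 0\}$ closed, so $\phi(\bar\alpha)\geq 0$; and $\phi(\alpha)<0$ for every $\alpha>\bar\alpha$ by the definition of $\bar\alpha$. It remains to exclude a strict right-jump of $\phi$ at $\bar\alpha$, that is, to promote the upper semicontinuity into continuity from the right and conclude $\phi(\bar\alpha)\leq 0$.

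My plan for this last step is to reverse the reparameterization used in the proof of Lemma~\ref{lem:mono_gamma}: given $\alpha_1<\alpha_2$, start from a minimizer of $m((V,\alpha_2),\varepsilon)$ and reparameterize through $d\vartheta/dt=\bar r^{(\alpha-\alpha_2)/2}$ to obtain a comparison path for $m((V,\alpha),\varepsilon)$ for every $\alpha\in[\alpha_1,\alpha_2]$; the analogue of the derivative computation in that lemma should produce the complementary bound
\[
\gamma((V,\alpha_1),\varepsilon)-\gamma((V,\alpha_2),\varepsilon)\;\leq\; C(\alpha_2-\alpha_1),
\]
with a constant $C$ independent of $\varepsilon\in(0,1]$. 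Taking the infimum in $\varepsilon$ would convert this into a right-Lipschitz estimate for $\phi$, which together with $\phi(\alpha_n)<0$ for any sequence $\alpha_n\downarrow\bar\alpha$ forces $\phi(\bar\alpha)\leq 0$, and combined with the reverse inequality yields $\phi(\bar\alpha)=0$.

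The main obstacle is closing the reverse estimate uniformly in $\varepsilon$: the derivative of the reparameterized action picks up an integrand containing $\log(\bar r/\varepsilon)$, which a priori blows up as $\varepsilon\to 0$. Control comes from the two-sided bound $\varepsilon\leq \bar r(t)\leq 1$ satisfied by every constrained Bolza minimizer, which follows from the Lagrange--Jacobi convexity of $\bar r^{2}$ (cf.\ Corollary~\ref{cor:convex}) together with the endpoint constraint $\bar r(\pm T)=1$; this confines $\log(\bar r/\varepsilon)$ to $[0,-\log\varepsilon]$ and, via the zero-energy identity $|\dot{\bar x}|^{2}=2V(\bar s)\bar r^{-\alpha}$, should let the divergent $\varepsilon$-dependence cancel in the same way as it does in the proof of Lemma~\ref{lem:mono_gamma}.
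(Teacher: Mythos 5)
Your uniqueness argument, and the first half of your existence argument (that $\bar\alpha:=\sup\{\alpha:\gamma((V,\alpha),0^+)\geq 0\}$ lies in $(0,2)$ and satisfies $\gamma((V,\bar\alpha),0^+)\geq 0$ by upper semicontinuity), are correct. The genuine gap is exactly the step you flag yourself: you never prove $\gamma((V,\bar\alpha),0^+)\leq 0$, and the plan you outline for it does not close. If you reparameterize a minimizer $\bar x=\bar r\bar s$ of $m((V,\alpha_2),\ve)$ down to an exponent $\alpha<\alpha_2$, the $\alpha$-derivative of the comparison action contains the term $-\tfrac12\int 2V(\bar s)\,\bar r^{-(\alpha+\alpha_2)/2}\log(\bar r/\ve)\dt$, so a lower bound on the derivative requires an \emph{upper} bound on that integral. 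The cancellation in Lemma \ref{lem:mono_gamma} works precisely because there one needs a \emph{lower} bound and can use $\sqrt{2V(\bar s)}\geq\sqrt{2V_{\min}}$ together with $\sqrt{2V(\bar s)}\,\bar r^{-\alpha_2/2}=|\dot{\bar x}|\geq|\dot{\bar r}|$. For the reverse direction the available inequalities are $\sqrt{2V(\bar s)}\leq\sqrt{2V_{\max}}$ and $|\dot{\bar x}|^2=\dot{\bar r}^2+\bar r^2|\dot{\bar s}|^2\geq\dot{\bar r}^2$, which leave an uncancelled excess of order $\bigl(\sqrt{2V_{\max}}-\sqrt{2V_{\min}}\bigr)\,|\log\ve|\,\ve^{-\alpha_*}$ plus the whole angular contribution $\ve^{-\alpha_*}\int\bar r^{2}|\dot{\bar s}|^{2}\,\bar r^{(\alpha_2-\alpha)/2}\log(\bar r/\ve)\dt$. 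Neither is bounded uniformly in $\ve$ by the crude confinement $\ve\leq\bar r\leq 1$; controlling them would require knowing that the minimizer is nearly radial, with $V(\bar s)$ close to $V_{\min}$, wherever $\bar r$ is far from $\ve$ --- which is essentially the statement one is trying to prove. So the uniform right-Lipschitz estimate is not obtainable this way, and the existence half remains open in your write-up.

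The step can instead be closed with tools the paper has already established at this point, by an entirely different route that avoids any right-continuity of $\alpha\mapsto\gamma((V,\alpha),0^+)$. Since $\gamma((V,\alpha),0^+)<0$ for every $\alpha>\bar\alpha$ while $\gamma((V,\bar\alpha),0^+)\geq 0$, Lemma \ref{lem:inout} gives $(V,\alpha)\in\setout$ for $\alpha>\bar\alpha$ and $(V,\bar\alpha)\in\setin$; hence $(V,\bar\alpha)\in\partial\setin\cap\partial\setout=\Pi$. Corollary \ref{coro:6.4} then yields $\pjump((V,\bar\alpha))=\vjump((V,\bar\alpha))=0$, and Corollary \ref{coro:6.5} combined with Lemma \ref{lem:stima_inf} gives $0\leq\gamma((V,\bar\alpha),\ve)\leq\frac{\pi}{2}\sqrt{2V_{\max}}\,\pjump(x_\ve)\to 0$ as $\ve\to 0$, i.e. $\gamma((V,\bar\alpha),0^+)=0$. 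You should replace your reverse-estimate plan by this argument.
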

Now we are in a position to prove that all the different notions of parabolicity we introduced
are equivalent.
\begin{theorem}\label{teo:equivalent}
Let $V \in \Poh$. Then the following facts are equivalent:
\begin{enumerate}
\item $V \in \Pi$;
\item $V$ admits a free parabolic Morse minimizer;
\item any constrained Morse minimizer for $V$ is a free parabolic one;
\item $\pjump(V)= \vjump(V) = 0$;
\item $\gamma(V,0^+)=0$.
\end{enumerate}
\end{theorem}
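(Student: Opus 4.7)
The plan is to close the cycle $(5)\Rightarrow(1)\Rightarrow(4)\Rightarrow(3)\Rightarrow(2)\Rightarrow(4)\Rightarrow(5)$, so that all five conditions are equivalent. Four of the six arrows are short: $(1)\Rightarrow(4)$ is exactly Corollary~\ref{coro:6.4}; $(3)\Rightarrow(2)$ is immediate from the existence statement Lemma~\ref{lem:Morse_not_empty}; and $(2)\Rightarrow(4)$ follows since a free parabolic Morse minimizer $x$ with $\ep:=\min_t|x(t)|>0$ is tautologically an $\ep$-constrained Morse minimizer of class $\cont^2$ everywhere, hence of type (b) in Definition~\ref{defi:type_bdd_int}, which gives $\pjump(V)=\vjump(V)=0$ through Definition~\ref{def:jumpV}. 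Finally, $(5)\Rightarrow(1)$ is handled by the quantitative monotonicity of Lemma~\ref{lem:mono_gamma}: if $\gamma((V,\alpha),0^+)=0$ then $\gamma((V,\alpha+\delta),0^+)\leq -C\delta<0$ for every $\delta>0$, so $(V,\alpha+\delta)\in\setout$ and $(V,\alpha)\in\overline{\setout}$; combined with $(V,\alpha)\in\setin$, and since $\setout$ is open while $\setin$ is closed by Lemma~\ref{lem:inout}, this forces $(V,\alpha)\in\partial\setin\cap\partial\setout=\Pi$.

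The first non-trivial equivalence is $(4)\Leftrightarrow(5)$, which is a bilateral application of Lemma~\ref{lem:stima_inf} to the Bolza minimizer $\bar x_\ep$ realising $m(V,\ep)$. For $(4)\Rightarrow(5)$: Lemma~\ref{lem:caratt_in_out} gives $V\in\setin$, hence $\gamma(V,0^+)\geq 0$; Corollary~\ref{coro:6.5} ensures $\vjump(\bar x_\ep)=0$ and $\pjump(\bar x_\ep)\to 0$, and then the upper estimate of Lemma~\ref{lem:stima_inf} yields $\gamma(V,\ep)\leq \tfrac{\pi}{2}\sqrt{2V_{\max}}\,\pjump(\bar x_\ep)\to 0$. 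Conversely, if $\gamma(V,0^+)=0$ then $V\in\setin$ and $\vjump(V)=\vjump(\bar x_\ep)=0$; rescaling as in Remark~\ref{rem:hom1} and invoking the convergence in Remark~\ref{rem:hom4} yield $\pjump(\bar x_\ep)\to\pjump(V)$, so the lower estimate of Lemma~\ref{lem:stima_inf} gives $\gamma(V,0^+)\geq\sqrt{2V_{\min}}\,\pjump(V)$, forcing $\pjump(V)=0$.

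The main step is $(4)\Rightarrow(3)$. Let $V\in\Pi$ and $x\in\morse$; then (4) combined with Proposition~\ref{propo:constraint}(b) makes $x$ a $\cont^2$ solution of the Euler--Lagrange equation touching $\partial B_\ep$ tangentially at a single instant. Suppose for contradiction that $x|_{[a,b]}$ fails to be a free Bolza minimizer: there are $a'<b'$ and $z\in H^1(a',b')$ with $z(a')=x(a)$, $z(b')=x(b)$, $\action(z)=\action(x|_{[a,b]})-\delta$ for some $\delta>0$, and by the constrained minimality of $x$ necessarily $\ep_3:=\min|z|<\min_{[a,b]}|x|$. Gluing $z$ into $x$ produces a path $\hat x$ asymptotic to $\xi^\pm$ with $\min|\hat x|=\ep_3$. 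The central device is the renormalised action
\[
A(\tilde x):=\lim_{R\to\infty}\bigl[\action(\tilde x|_{\{|\tilde x|\leq R\}})-2\,\mathrm{hom}(0,R,V_{\min})\bigr].
\]
For any Morse minimizer $\tilde x$ (of any constraint radius $\ep'$), the rescaling identity $m(R\xi^-,R\xi^+,\ep')=R^{\alpha_*}m(V,\ep'/R)$ together with $\gamma(V,0^+)=0$ and the identity $2\,\mathrm{hom}(0,R,V_{\min})=R^{\alpha_*}m(V,0)$ give $m(R\xi^-,R\xi^+,\ep')-2\,\mathrm{hom}(0,R,V_{\min})=\gamma(V,\ep'/R)(\ep')^{\alpha_*}\to 0$; the endpoint mismatch $\action(\tilde x|_{\{|\tilde x|\leq R\}})-m(R\xi^-,R\xi^+,\ep')$ is $o(1)$ by Lemma~\ref{lem:asintotiche} and Lemma~\ref{lem:est_from_above}, so $A(\tilde x)=0$. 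Picking any $\ep_3$-constrained Morse minimizer $x^{(3)}$, building short Bolza bridges at radius $R$ (again $o(1)$ by Lemmata~\ref{lem:asintotiche} and~\ref{lem:est_from_above}) to match the endpoints of $x^{(3)}$ with those of $\hat x$, and invoking the constrained Morse minimality of $x^{(3)}$, give $A(x^{(3)})\leq A(\hat x)$ in the limit $R\to\infty$. On the other hand $\hat x$ coincides with $x$ outside $[a,b]$, so directly $A(\hat x)=A(x)-\delta=-\delta$, while $A(x^{(3)})=0$: this is the desired contradiction. The main obstacle is the rigorous control of the renormalised action on the non-minimising path $\hat x$, and the bridging step which must accommodate two different constraint radii $\ep$ and $\ep_3$; both technical points are governed by the sharp decay of Lemma~\ref{lem:asintotiche}.
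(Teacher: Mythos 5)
Your proposal is essentially correct, and on the two hardest implications it takes a genuinely different route from the paper. The paper closes the cycle $1\Rightarrow2\Rightarrow3\Rightarrow4\Rightarrow5\Rightarrow1$, where $1\Rightarrow2$ is an approximation argument: one takes $(V_n,\alpha_n)\in\setout$ converging to $V$, free Bolza minimizers for $V_n$ whose minimal radii tend to zero, rescales to radius one and passes to the limit via Theorem \ref{teo:stab} to produce \emph{one} free parabolic minimizer; only then does $2\Rightarrow3$ upgrade this to \emph{all} constrained minimizers via the comparison of Lemma \ref{lem:unico_tipo}. You bypass the approximation entirely: having secured $\gamma(V,0^+)=0$ from $(4)\Leftrightarrow(5)$, you show that the renormalised action of \emph{every} constrained Morse minimizer equals $\gamma(V,0^+)(\ep')^{\alpha_*}=0$ (via the scaling identity $m(R\xi^-,R\xi^+,\ep')=R^{\alpha_*}m(V,\ep'/R)$ and the $o(1)$ bridges of Lemmata \ref{lem:asintotiche} and \ref{lem:est_from_above}), and then obtain free minimality of any $x\in\morse$ by comparing with a Morse minimizer at the competitor's constraint radius. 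This makes the role of the hypothesis $\gamma(V,0^+)=0$ transparent and is a legitimate alternative to the paper's $1\Rightarrow2\Rightarrow3$. The remaining arrows coincide, up to reordering, with the paper's.

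One case is missing from your main step $(4)\Rightarrow(3)$. The competitor $z$ beating $x|_{[a,b]}$ is an arbitrary $H^1$ path, so it may pass through the origin; then $\ep_3=\min|z|=0$ and there is no $\ep_3$-constrained Morse minimizer $x^{(3)}$ to compare with (Definition \ref{defi:constr_Morse_min} and Lemma \ref{lem:Morse_not_empty} require $\ve>0$). The fix is immediate: if $\hat x$ collides, split $\hat x|_{\{|\hat x|\leq R\}}$ at a collision time and apply Lemma \ref{lem:omo1} radially to each arc to get $\action(\hat x|_{\{|\hat x|\leq R\}})\geq 2\,\mathrm{hom}(0,R,V_{\min})$, hence $A(\hat x)\geq 0$, contradicting $A(\hat x)=-\delta$ directly; alternatively invoke the continuity of $m(x(a),x(b),\cdot)$ at $\ve=0$ to replace $z$ by a collisionless competitor losing at most $\delta/2$. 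Two smaller points: your claim that necessarily $\ep_3<\min_{[a,b]}|x|$ does not follow from constrained minimality alone (a cheaper competitor could a priori have a larger minimal radius when the minimum of $|x|$ on $[a,b]$ is attained in the interior), but nothing in your argument uses it — you only need to compare with a $(\min|\hat x|)$-constrained minimizer, whatever that radius is. And in $(5)\Rightarrow(4)$ the citation of Corollary \ref{coro:6.5} is circular, since its hypothesis already includes $\pjump(V)=0$; the fact you actually need — that $V\in\setin$ forces $\vjump(\bar x_\ve)=0$ for every Bolza minimizer achieving $m(V,\ve)$ — is established inside the proof of Lemma \ref{lem:caratt_in_out} via Lemma \ref{lem:stima_sup}, so cite that argument instead.
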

\begin{proof}
$[1. \implies 2.]$. Let $(V,\alpha) \in \Pi$.
Then, on one hand $(V,\alpha) \in \setin$; on the other hand there exists
$(V_n,\alpha_n) \in \setout$ with $(V_n,\alpha_n) \to (V,\alpha)$.
As a consequence there exists $y_n$ such that
\[
y_n \text{ achieves } c(V_n)
\quad \text{and} \quad
0 < \ve_n := \min|y_n| \to 0.
\]
Therefore the renormalized paths $x_n(t) := \ep_n^{-1} y_n\left(\ep_n^{-(2+\alpha_n)/2}t\right)$ are free Bolza minimizers.
Using Theorem \ref{teo:stab} we conclude that $x_n$ converges to a free parabolic Morse minimizer for
the potential $(V,\alpha)$ (see also Remark \ref{rem:freeconv}).\\
$[2. \implies 3.]$. Let $x$ denote a free parabolic Morse minimizer for $V$.
By rescaling, we  can assume that $x$ is also a 1-constrained Morse minimizer, so that
$\pjump(V) = \vjump(V) = 0$.
On the other hand let $\hat x$ be any 1-constrained Morse minimizer.
Mimicking the proof of Proposition \ref{propo:unico_tipo}, for any (large) $n \in \NN$
we denote with $x^{\pm}_n$, $\hat x^{\pm}_n$ the points of the two trajectories on the sphere of radius $n$.
Going back to the notation of the previous sections, we denote with
$m(x^-_n,x^+_n,1)$ and $m(\hat x^-_n,\hat x^+_n,1)$ the actions of the restriction of the paths
to the ball of radius $n$.
As a consequence, Lemma \ref{lem:unico_tipo} applies providing
\begin{equation} \label{eq:diffaction}
\left| m(x^-_n,x^+_n,1) - m(\hat x^-_n,\hat x^+_n,1) \right| \to 0, \quad \text{as } n \to \infty.
\end{equation}
On the other hand let us define
\[
c(\hat x^-_n,\hat x^+_n) := \inf_{\ve >0} m(\hat x^-_n,\hat x^+_n,\ve),
\]
in such a way that $\hat x$ is a free minimizer if and only if $c(\hat x^-_n,\hat x^+_n)
=m(\hat x^-_n,\hat x^+_n,1)$, for every $n$.
Reasoning by contradiction we can assume that, for instance,
\[
c(\hat x^-_2,\hat x^+_2) \leq m(\hat x^-_2,\hat x^+_2,1) - k
\]
for some $k>0$.
By juxtaposition, we have the following estimate:
\[
\begin{split}
c(\hat x^-_n,\hat x^+_n)
& \leq m(\hat x^-_n,\hat x^-_2,2) + c(\hat x^-_2,\hat x^+_2) + m(\hat x^+_2, \hat x^+_n,2) \\
& \leq m(\hat x^-_n,\hat x^-_2,2) + m(\hat x^-_2,\hat x^+_2,1) - k + m(\hat x^+_2, \hat x^+_n,2) \\
& =    m(\hat x^-_n,\hat x^+_n,1) - k.
\end{split}
\]
Since $x$ is a free minimizer, using the previous estimate we obtain
\[
\begin{split}
m(x^-_n, x^+_n,1)
& \leq m(x^-_n,\hat x^-_n,n) + c(\hat x^-_n,\hat x^-_n) + m(\hat x^+_n, x^+_n,n) \\
& \leq m(\hat x^-_n,\hat x^+_n,1) - k + \underbrace{m(x^-_n,\hat x^-_n,n)  + m(\hat x^+_n, x^+_n,n)}_{(*)}.
\end{split}
\]
Using Lemma \ref{lem:asintotiche} we can prove that $(*) = o(1)$ as $n \to \infty$
(recall the proof of Lemma \ref{lem:unico_tipo}).
But then the last estimate is in contradiction with equation \eqref{eq:diffaction}.\\
$[3. \implies 4.]$. This is trivial since if $x$ is a free parabolic minimizer, then
it is also a $(\min |x|)$-constrained minimizer.\\
$[4. \implies 5.]$. Since $\vjump(V) = 0$, we have that $V \in \setin$;
by Corollary \ref{coro:6.5} we have that, if $x_\ve$ achieves $m(V,\ve)$ and
$\ve$ is small, then $\vjump(x_{\ve}) = 0$. We can then apply Lemma \ref{lem:stima_inf}
and let $\ve \to 0$, in order to obtain
\[
0 \leq \gamma(V,\ve) \leq \frac{\pi\sqrt{2V_{\max}}}{2} \, \pjump(x_{\ve}) \to 0.
\]
$[5. \implies 1.]$.  On one hand, $V \in \setin$ by definition.
On the other hand, letting $\alpha_n := \alpha+1/n$, we have that $(V,\alpha_n) \to (V,\alpha)$
and, by Lemma \ref{lem:mono_gamma}, $(V,\alpha_n) \in \setout$.
\end{proof}
\begin{corollary}
Combining the previous theorem with Corollary \ref{coro:defbaralpha} we have that
\[
\Pi = \{ (V,\alpha) : V \in \Sigma \text{ and } \alpha = \bar \alpha (V)\}
\]
and
\[
\setin = \{ (V,\alpha) : V \in \Sigma \text{ and } \alpha \leq \bar \alpha (V)\}.
\]
\end{corollary}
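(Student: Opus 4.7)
My plan is to show both equalities as a direct bookkeeping exercise combining the characterization $V\in\Pi\iff\gamma(V,0^+)=0$ (equivalence $1\iff 5$ in Theorem \ref{teo:equivalent}), the analogous characterization $V\in\setin\iff\gamma(V,0^+)\geq 0$ from Lemma \ref{lem:inout}, the strict monotonicity of $\alpha\mapsto\gamma((V,\alpha),0^+)$ furnished by Lemma \ref{lem:mono_gamma}, and the existence/uniqueness of $\bar\alpha(V)$ stated in Corollary \ref{coro:defbaralpha}. There is no substantive obstacle here; the only mildly delicate point is making sure that $V\in\Sigma$ is guaranteed (i.e.\ that $\gamma((V,0^+),0^+)>0$) whenever we know $\gamma((V,\alpha),0^+)\geq 0$ for some $\alpha>0$.

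For the first identity, I would argue as follows. If $(V,\alpha)\in\Pi$ then by Theorem \ref{teo:equivalent} we have $\gamma((V,\alpha),0^+)=0$. Picking any $\alpha'\in(0,\alpha)$, Lemma \ref{lem:mono_gamma} gives
\[
\gamma((V,\alpha'),0^+)\geq \gamma((V,\alpha),0^+)+\frac{4\sqrt{2V_{\min}}}{(2-\alpha')^2}(\alpha-\alpha')>0,
\]
so passing to the $\liminf$ as $\alpha'\to 0^+$ yields $\gamma((V,0^+),0^+)>0$, i.e.\ $V\in\Sigma$. Uniqueness of the zero of $\alpha\mapsto\gamma((V,\alpha),0^+)$ (Corollary \ref{coro:defbaralpha}, which is itself a consequence of the strict monotonicity) then forces $\alpha=\bar\alpha(V)$. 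Conversely, if $V\in\Sigma$ and $\alpha=\bar\alpha(V)$, then by definition $\gamma((V,\alpha),0^+)=0$, so Theorem \ref{teo:equivalent} gives $(V,\alpha)\in\Pi$.

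For the second identity I would repeat the same scheme with weak inequalities. If $(V,\alpha)\in\setin$ then Lemma \ref{lem:inout} gives $\gamma((V,\alpha),0^+)\geq 0$; the previous display with any $\alpha'\in(0,\alpha)$ still yields $\gamma((V,0^+),0^+)>0$, hence $V\in\Sigma$. Using again the strict monotonicity in $\alpha$, the relation
\[
\gamma((V,\alpha),0^+)\geq 0=\gamma((V,\bar\alpha(V)),0^+)
\]
forces $\alpha\leq\bar\alpha(V)$. Conversely, if $V\in\Sigma$ and $\alpha\leq\bar\alpha(V)$, monotonicity gives $\gamma((V,\alpha),0^+)\geq \gamma((V,\bar\alpha(V)),0^+)=0$, whence $(V,\alpha)\in\setin$ by Lemma \ref{lem:inout}. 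This closes both inclusions and completes the proof.
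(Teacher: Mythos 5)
Your proof is correct and follows exactly the route the paper intends (the paper omits the proof as an immediate combination of Theorem \ref{teo:equivalent}, Lemma \ref{lem:inout}, Lemma \ref{lem:mono_gamma} and Corollary \ref{coro:defbaralpha}). The one point worth making explicit --- that membership in $\Pi$ or $\setin$ already forces $V\in\Sigma$ via the quantitative monotonicity of $\gamma((V,\cdot),0^+)$ --- is handled cleanly in your argument.
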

\begin{proof}[End of the proof of the main results.]
Summarizing, in order to end the proof of the results we stated in the Introduction,
we have to show that $\Sigma$ is an open subset of $\Seh$, that the function $\bar \alpha$ is continuous,
and that $\Sigma$ (and therefore $\Pi$) is not empty.

Let $V_0 \in \Sigma$ and $\bar\alpha_0:=\bar\alpha(V_0)>0$; by Lemma \ref{lem:mono_gamma}
$\gamma \left((V_0,\bar\alpha_0 /2),0^+\right)>0$,
then $(V_0,\bar\alpha_0 /2) \in \setin$ and $\pjump\left((V_0,\bar\alpha_0 /2)\right)>0$.
By continuity of $\pjump$ we infer the existence of a neighborhood
$\mathcal{V}\times (\bar\alpha_0 /2-\delta,\bar\alpha_0 /2+\delta)$
such that, for any $(V,\alpha)$ in that neighborhood, $\pjump(V,\alpha)>0$.
As a consequence $\mathcal{V} \subset \Sigma$ is a neighborhood of $V_0 \in \sphere$.
Once we have proved that $\Sigma$ is open, the continuity of $\bar \alpha$
descends from the one of $\pjump$ and $\vjump$.
Finally the fact that $\Sigma$ is not empty is implied by Lemma \ref{lem:O} below.
\end{proof}
We conclude providing some explicit conditions to ensure that a $V \in \Seh$
belongs to $\Sigma$. Roughly speaking, this happens when $V$ is very much larger
than $V_{\min}$ on a zone that must be crossed in order to connect the two minimal
configurations $\xi^-$ and $\xi^+$.
\begin{lemma}\label{lem:O}
Let $V \in \Seh$ and let us assume that the open set $O \subset \sphere$ is such that
\begin{enumerate}
\item $\sphere \setminus O$ has exactly two connected components $F^-$ and $F^+$
      with $\xi^{\pm} \in F^{\pm}$.
\item $\displaystyle \sqrt{2\min_{s \in \overline O}(V(s)-V_{\min})} \cdot \mathrm{dist}(F^-,F^+) > 2\sqrt{2V_{\min}}$.
\end{enumerate}
Then $V \in \Sigma$.
\end{lemma}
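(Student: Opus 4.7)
The plan is to derive a quantitative lower bound on $m((V,\alpha),\ep)$ which, after dividing by $\ep^{\alpha_*}$ and letting $\ep\to0^+$ then $\alpha\to0^+$, yields $\gamma((V,0^+),0^+)>0$; this is precisely the condition $V\in\Sigma$ from Corollary \ref{coro:defbaralpha}. Set $\gamma_0:=\min_{s\in\overline O}(V(s)-V_{\min})>0$ and $D:=\mathrm{dist}(F^-,F^+)$; the hypothesis reads $\sqrt{2\gamma_0}\,D>2\sqrt{2V_{\min}}$.

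Fix $\alpha\in(0,2)$ and $\ep\in(0,1]$, and let $\bar x=\bar r\bar s\in\Gamma(\xi^-,\xi^+,\ep)$ achieve $m((V,\alpha),\ep)$, defined on $[T_1,T_2]$ with $\bar r$ decreasing on $[T_1,a]$, constantly equal to $\ep$ on $[a,b]$, and increasing on $[b,T_2]$. Since $\bar s(T_1)=\xi^-\in F^-$, $\bar s(T_2)=\xi^+\in F^+$ and $F^\pm$ are disjoint closed connected components of $\mathbb{S}^{d-1}\setminus O$, I would set
\[
t_1:=\sup\{t\in[T_1,T_2]:\bar s(t)\in F^-\},\qquad
t_2:=\inf\{t>t_1:\bar s(t)\in F^+\},
\]
so that $\bar s(t_1)\in F^-$, $\bar s(t_2)\in F^+$, $\bar s([t_1,t_2])\subset\overline O$, and in particular $|\bar s(t_2)-\bar s(t_1)|\geq D$ and $V(\bar s(t))-V_{\min}\geq\gamma_0$ on $[t_1,t_2]$.

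The key step is an AM-GM splitting of the Lagrangian: I write $V(\bar s)/\bar r^\alpha=V_{\min}/\bar r^\alpha+(V(\bar s)-V_{\min})/\bar r^\alpha$ and pair the first summand with the radial kinetic term, the second with the rotational one:
\[
\tfrac12\dot{\bar r}^2+\frac{V_{\min}}{\bar r^\alpha}\geq\sqrt{2V_{\min}}\,\frac{|\dot{\bar r}|}{\bar r^{\alpha/2}},\qquad
\tfrac12\bar r^2|\dot{\bar s}|^2+\frac{V(\bar s)-V_{\min}}{\bar r^\alpha}\geq\sqrt{2(V(\bar s)-V_{\min})}\,\bar r^{\alpha_*}|\dot{\bar s}|.
\]
Integrating the first inequality over $[T_1,T_2]$ and using the monotonicity of $\bar r$ on $[T_1,a]\cup[b,T_2]$ (so that the change of variables $r=\bar r(t)$ yields $\int|\dot{\bar r}|\bar r^{-\alpha/2}\,dt=2\int_\ep^1 r^{-\alpha/2}\,dr$) produces $(2\sqrt{2V_{\min}}/\alpha_*)(1-\ep^{\alpha_*})$. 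Integrating the second over $[t_1,t_2]$, and using $\bar r\geq\ep$ together with $\int_{t_1}^{t_2}|\dot{\bar s}|\,dt\geq|\bar s(t_2)-\bar s(t_1)|\geq D$, produces $\sqrt{2\gamma_0}\,\ep^{\alpha_*}D$. Adding and recalling $m((V,\alpha),0)=2\sqrt{2V_{\min}}/\alpha_*$, I obtain
\[
m((V,\alpha),\ep)\geq m((V,\alpha),0)+\left(\sqrt{2\gamma_0}\,D-\frac{2\sqrt{2V_{\min}}}{\alpha_*}\right)\ep^{\alpha_*}.
\]

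Dividing by $\ep^{\alpha_*}$ yields a bound on $\gamma((V,\alpha),\ep)$ that is independent of $\ep$; passing to the limit $\ep\to0^+$ and then $\alpha\to0^+$ (so $\alpha_*\to1$) gives
\[
\gamma((V,0^+),0^+)\geq\sqrt{2\gamma_0}\,D-2\sqrt{2V_{\min}}>0,
\]
which is exactly $V\in\Sigma$. The only non-routine step is the choice of AM-GM splitting, which must isolate the $V_{\min}$ part of the potential for the radial balance while leaving the excess $V(\bar s)-V_{\min}$ to amplify the angular contribution on the crossing interval; the geometric content of the hypothesis then falls out cleanly.
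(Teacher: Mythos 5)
Your proof is correct and follows essentially the same route as the paper: the paper also identifies a crossing interval of $\overline O$, invokes the lower bound of Lemma \ref{lem:est_from_below} (which is exactly the splitting $V=V_{\min}+(V-V_{\min})$ paired with the radial and angular kinetic terms that you re-derive inline via pointwise AM--GM), and then passes to the limits $\ve\to0^+$ and $\alpha\to0^+$ in $\gamma((V,\alpha),\ve)$. The only difference is presentational -- you reprove the needed estimate directly instead of citing the lemma -- and your handling of the crossing times $t_1,t_2$ and of the constant piece $\bar r\equiv\ve$ is sound.
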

\begin{proof}
Let $\alpha \in (0,2)$ and $\ve \in (0,1)$ be fixed.
Furthermore let $x=rs$ be any path joining $\xi^-$ and $\xi^+$, with $\min r = \ve$;
by assumption, there exists an interval $[t_1,t_2]$ of times in such a way that
$s((t_1,t_2)) \subset O$, $s(t_1) \in \partial F^-$ and $s(t_2) \in \partial F^+$.
As a consequence we have that $\mathrm{dist}(F^-,F^+) \leq |s(t_2)-s(t_1)|$ and thus
\[
\sqrt{2\min_{t \in [t_1,t_2]}(V(s(t))-V_{\min})} \cdot |s(t_2)-s(t_1)|
\geq 2\sqrt{2V_{\min}}+k,
\]
for a suitable $k>0$.
Using this information in Lemma \ref{lem:est_from_below} we obtain that
\[
\begin{split}
m((V,\alpha),\ve)
& \geq 2\hom(\ve,1,V_{\min}) + (2\sqrt{2V_{\min}} + k)\ve^{\alpha_*}\\
& = m((V,\alpha),0) + \left( -\frac{2\sqrt{2V_{\min}}}{\alpha_*}+ 2\sqrt{2V_{\min}} + k\right)\ve^{\alpha_*},
\end{split}
\]
implying
\[
\gamma((V,\alpha),\ve) \geq -\frac{2\sqrt{2V_{\min}}}{\alpha_*}+ 2\sqrt{2V_{\min}} + k.
\]
The lemma follows by taking the limit of the previous inequality first with respect to $\ve \to 0^+$
and then with respect to $\alpha \to 0^+$ (recall the definition of $\Sigma$ in Corollary \ref{coro:defbaralpha}).
\end{proof}
Of course, reasoning as in the previous lemma, it is possible to
manage also situations where the set $O$, on which $V$ is larger than $V_{\min}$,
disconnects $\sphere$ in more than two components.
We remark that to fulfill the assumptions of Lemma \ref{lem:O},
two (slightly different) mechanisms are available: either one can act locally near $\xi^\pm$,
e.g. choosing $\mu$ sufficiently large in the definition of $\Seh$;
or the potential  can be chosen arbitrarily ``flat'' near $\xi^\pm$ and
very large elsewhere.

To conclude, exploiting the characterization we obtained, we are in a position to
clarify the meaning of Remark \ref{rem:weakkam}.
\begin{remark}\label{rem:hj}
Consider the planar case with $\xi^-=\xi^+=\xi$, under a suitable topological constraint, and assume $V\in\Pi$. As already remarked in the Introduction, associated with the
family of the free parabolic minimizers there is a lamination of the plane.
This lamination  inherits an interesting minimization property: indeed,
let $\overline x(t)=\overline\rho(t) \xi$ be the homothetic  ejection  trajectory (Appendix \ref{app:stime})
such that $\overline\rho(0)=0$ and
$\overline\rho(+\infty)=+\infty$ and consider the map $w:\RR^2\setminus\{0\}\to\RR$
defined as
\begin{multline*}
w(z) = \min\left\{\int_0^{+\infty}[\lagr(\dot x(t),x(t))-\lagr(\dot{\overline{x}}(t),\overline{x}(t))]\dt,
\text{ such that} \right.\\
\left. x(0)=z \text{ and } \int_0^{+\infty}\vert x(t)-\overline x(t)\vert^2 \dt < +\infty \right\}.
\end{multline*}
Then, at each $z$, the minimal path is one of the two infinite arcs of the unique minimal parabolic trajectory $z(t)$ passing through $z$ at $t=0$. The contribution of the (renormalized) action can be easily computed and it is exactly $\pm z(0)\cdot \dot z(0)$ (Lemma \ref{lem:stima_azione}).
Hence there is a unique minimizing path passing through $z$ whenever the derivative of the radius does not vanish. Observe that the function $w$ is of class $\mathcal C^1$ except at the points where  $z(0)\cdot \dot z(0)=0$, where the minimizing arc is not unique. The gradient of $w$ is the velocity $\pm\dot z(0)$, depending on the orientation. Hence, the function $w$ is  Lipschitz continuous and solves the stationary Hamilton-Jacobi equation
\[
\dfrac12|\nabla u(z)|^2-V(z)=0,
\]
except at the points where $z(0)\cdot \dot z(0)=0$.
A global $\mathcal C^1$ solution can be easily construct on the double covering of the punctured plane (this is reminiscent of the double covering associated with Levi-Civita regularization).
\end{remark}

\begin{appendices}
For the reader's convenience, we collect here the proofs of some slight modifications
of rather standard arguments.
\section{The Maupertuis' Principle}\label{app:maup}
Let us consider the (sufficiently smooth) maps
\begin{itemize}
 \item $K:\RR^{2d}\ni(\dot x,x)\mapsto K(\dot x,x)\in[0,+\infty)$, with $K(\lambda\dot x,x)= \lambda^2K(\dot x,x)$, for every $\lambda$, and
 \item $P:\RR^{d}\ni x \mapsto P(x)\in(0,+\infty]$.
\end{itemize}
\begin{lemma}\label{lem:maup}
Let $x\in H^1\left( (a,b); \RR^d\right)$ be a fixed path, such that
\[
A([a,b];x) := \int_a^b \left[ K(\dot x(t),x(t))+P(x(t))\right] \dt\in(0,+\infty)
\]
and
\[
K(\dot x(t),x(t))\geq \delta >0 \text{ on }(a,b).
\]
Let us consider the set of all re-parameterizations of $x$, namely
\[
\Gamma_{x}:=\left\{\begin{array}{ll}
                     ((0,T),f) : & \,f:(0,T)\to(a,b),\,\text{Lipschitz continuous and}\\
                      &\text{increasing, such that } x\circ f\in H^1\left( (0,T); \RR^d\right)
                   \end{array}
            \right\}.
\]
Finally, let us define
\[
\vartheta(t):=\int_a^t\sqrt{\frac{K(\dot x(t),x(t))}{P(x(t))}}\dt,\quad
\hat T:=\int_a^b\sqrt{\frac{K(\dot x(t),x(t))}{P(x(t))}}\dt,
\]
\[
\hat f := \vartheta^{-1} :[0,\hat T]\to [a,b].
\]
Then
\[
\min_{\Gamma_x} A([0,T],x\circ f)\text{ is achieved by }\left(\left(0,\hat T\right),\hat f\right).
\]
Moreover, writing $\hat x := x\circ \hat f$, we have that, for (almost) every $\vartheta$,
\[
K({\hat x}'(\vartheta),{\hat x}(\vartheta))=P({\hat x}(\vartheta)) \quad \left( ':= \frac{\d}{\d\vartheta} \right).
\]
\end{lemma}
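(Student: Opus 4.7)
The plan is to reduce the minimization problem to a one-variable pointwise inequality via AM--GM, after rewriting the action so that the reparameterization enters only through a single scalar density $h(t)$. Concretely, given $((0,T),f)\in\Gamma_x$, I set $y:=x\circ f$, so that $\dot y(\vartheta)=\dot x(f(\vartheta))f'(\vartheta)$. Using the $2$-homogeneity of $K$ in the first slot,
\[
K(\dot y(\vartheta),y(\vartheta)) = f'(\vartheta)^2\, K(\dot x(f(\vartheta)),x(f(\vartheta))),
\]
and then changing variables $t=f(\vartheta)$, with $h(t):=f'(f^{-1}(t))>0$, I obtain
\[
A([0,T];x\circ f) = \int_a^b \Bigl[\, h(t)\,K(\dot x(t),x(t)) + \frac{P(x(t))}{h(t)} \,\Bigr]\dt.
\]
The assumption $K(\dot x,x)\geq\delta>0$ on $(a,b)$, together with $A([a,b];x)<\infty$ (which yields $P(x)\in L^1$), makes the integrand meaningful and the change of variables legitimate.

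Next I apply the elementary inequality $aA+B/a\geq 2\sqrt{AB}$ with $A=K(\dot x(t),x(t))$ and $B=P(x(t))$, which yields
\[
A([0,T];x\circ f) \ \geq\ \int_a^b 2\sqrt{K(\dot x(t),x(t))\,P(x(t))}\,\dt,
\]
with equality pointwise a.e. if and only if
\[
h(t)^2\, K(\dot x(t),x(t)) = P(x(t)),\qquad\text{i.e.}\qquad h(t)=\sqrt{\tfrac{P(x(t))}{K(\dot x(t),x(t))}}.
\]
A direct calculation with $\hat f=\vartheta^{-1}$ shows that $\hat f'(\vartheta(t))=1/\vartheta'(t)=\sqrt{P(x(t))/K(\dot x(t),x(t))}$, so $\hat f$ realizes the equality case and therefore achieves the minimum on $\Gamma_x$.

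Finally, to verify the pointwise relation $K(\hat x'(\vartheta),\hat x(\vartheta))=P(\hat x(\vartheta))$, I use $\hat x'(\vartheta)=\dot x(\hat f(\vartheta))\,\hat f'(\vartheta)$ and again the $2$-homogeneity of $K$:
\[
K(\hat x'(\vartheta),\hat x(\vartheta)) = \hat f'(\vartheta)^2\,K(\dot x(\hat f(\vartheta)),x(\hat f(\vartheta))) = \frac{P(x(\hat f(\vartheta)))}{K(\dot x(\hat f(\vartheta)),x(\hat f(\vartheta)))}\cdot K(\dot x(\hat f(\vartheta)),x(\hat f(\vartheta))),
\]
which simplifies to $P(\hat x(\vartheta))$, as required.

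There is no real conceptual obstacle, but the care needed lies in (i) justifying the change of variables $t=f(\vartheta)$ for an absolutely continuous, strictly increasing $f$, and (ii) using $K\geq\delta>0$ to guarantee that $\vartheta$ is a bi-Lipschitz change of variables and that $\hat T$ is finite; these are exactly the places where the hypothesis that $K(\dot x,x)$ is bounded away from zero is used.
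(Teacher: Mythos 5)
Your proof is correct and follows essentially the same route as the paper: both reduce the problem to the reparameterization-invariant lower bound $A([0,T];x\circ f)\geq 2\int_a^b\sqrt{K(\dot x,x)\,P(x)}\,\dt$ and identify the equality case $f'^2K=P$, which $\hat f$ realizes. The only difference is organizational -- you change variables to $t$ first and apply a pointwise AM--GM, while the paper applies an integral AM--GM followed by Cauchy--Schwarz in the $\vartheta$ variable and changes variables only in the final $f$-independent term (which sidesteps the small issue, which you rightly flag, of $f'$ possibly vanishing on a set of positive measure).
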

\begin{proof}
Let us observe that, for any $f \in \Gamma_x$,
\begin{multline*}
A([0,T],x\circ f)  =
\int_0^T \left[K(\dot x(f(\vartheta))f'(\vartheta),x(f(\vartheta)))+P(x(f(\vartheta)))\right] \d\vartheta \\
                   =
\int_0^T \left[f'(\vartheta)^2 K(\dot x(f(\vartheta)),x(f(\vartheta)))+P(x(f(\vartheta)))\right] \d\vartheta \\
 \geq
2\left(
\int_0^T f'(\vartheta)^2 K(\dot x(f(\vartheta)),x(f(\vartheta))) \d\vartheta \cdot
\int_0^T P(x(f(\vartheta))) \d\vartheta \right)^{1/2}\\
 \geq
2\int_0^T  \sqrt{K(\dot x(f(\vartheta)),x(f(\vartheta))) \cdot P(x(f(\vartheta)))} f'(\vartheta)\d\vartheta \\
 =
2\int_a^b  \sqrt{K(\dot x(t),x(t)) \cdot P(x(t))} \dt,
\end{multline*}
and equality holds if and only if
\[
f'(\vartheta)^2 K(\dot x(f(\vartheta)),x(f(\vartheta))) = P(x(f(\vartheta)))
\]
almost everywhere. Since the last term in the previous inequality does not depend on $f$, we have that
$f$ minimizes $A([0,T],x\circ f)$ if and only if the last equality holds. This is equivalent to
satisfy
\[
f'(\vartheta) = \sqrt{\frac{P(x(f(\vartheta)))}{K(\dot x(f(\vartheta)),x(f(\vartheta)))}}.
\]
Since $f$ is strictly increasing, we can use its inverse in order to write $\vartheta = \vartheta(t)$,
and the lemma follows.
\end{proof}
\begin{corollary}\label{lem:generale}
Let $\Gamma$ be a set of paths closed under re-parametrization and
let $\bar x \in \Gamma$ be such that $A(\bar x) =\min_{x \in \Gamma}A(x)$.
Then, for (almost) every $t$,
\[
K(\dot {\bar x}(t),{\bar x}(t))=P({\bar x}(t)).
\]
\end{corollary}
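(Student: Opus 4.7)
The strategy is to reduce directly to Lemma \ref{lem:maup} by exploiting the invariance of $\Gamma$ under re-parametrization. First I would observe that, since $\Gamma$ is closed under re-parametrization and $\bar x\in\Gamma$, the whole set $\Gamma_{\bar x}$ of admissible re-parametrizations introduced in Lemma \ref{lem:maup} is contained in $\Gamma$. After an innocuous time-translation, we may assume $\bar x$ is parametrized on an interval of the form $(0,T)$, so that the identity lies in $\Gamma_{\bar x}$ and
\[
A(\bar x) = \min_{x\in\Gamma} A(x) \leq \min_{\Gamma_{\bar x}} A([0,T],\,\bar x\circ f) \leq A(\bar x);
\]
therefore $\bar x$, in its own parametrization, realizes the minimum of $A$ over $\Gamma_{\bar x}$.

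Next I would inspect the equality cases of the inequality chain in the proof of Lemma \ref{lem:maup}. That argument controls
\[
A([0,T],\,\bar x\circ f)=\int_0^T\left(f'^2\, K(\dot{\bar x}\circ f,\bar x\circ f)+P(\bar x\circ f)\right)\d\vartheta
\]
first by the AM-GM bound $a+b\geq 2\sqrt{ab}$ applied to $a=\int_0^T f'^2 K$ and $b=\int_0^T P$, and then by the Cauchy--Schwarz bound on $\int_0^T f'\sqrt{KP}\,\d\vartheta$, whose value is reparametrization invariant. Equality in AM-GM forces $\int_0^T f'^2 K=\int_0^T P$, while equality in Cauchy--Schwarz forces $f'(\vartheta)^2\,K(\dot{\bar x}(f(\vartheta)),\bar x(f(\vartheta)))=c^2\,P(\bar x(f(\vartheta)))$ a.e.\ for some constant $c\geq 0$. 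Combining the two conditions, and using $P>0$, yields $c=1$ and hence $f'^2 K=P$ a.e.

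Finally, specializing this pointwise identity to $f=\mathrm{id}$ (so that $f'\equiv 1$) gives $K(\dot{\bar x}(t),\bar x(t))=P(\bar x(t))$ for a.e.\ $t$, as required. The argument is short and essentially forced once Lemma \ref{lem:maup} is in hand; the only bookkeeping needed is verifying that the identity is a valid element of $\Gamma_{\bar x}$, and that the non-degeneracy hypothesis $K(\dot{\bar x},\bar x)\geq\delta>0$ implicit in Lemma \ref{lem:maup} is available. In our concrete setting $K(\dot x,x)=\tfrac12|\dot x|^2$ and $P=V$ is strictly positive, so these technicalities cause no obstruction.
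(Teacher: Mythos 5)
The paper gives no proof of this corollary, treating it as immediate from Lemma \ref{lem:maup}, and your derivation --- the identity reparametrization minimizes $A$ over $\Gamma_{\bar x}\subset\Gamma$, hence attains the reparametrization-invariant lower bound $2\int_a^b\sqrt{K P}\,\dt$, hence must satisfy the a.e.\ equality conditions of the AM--GM and Cauchy--Schwarz steps --- is exactly the intended argument and is correct. The one imprecise point is your closing remark: the hypothesis $K(\dot{\bar x},\bar x)\ge\delta>0$ of Lemma \ref{lem:maup} concerns a lower bound on $|\dot{\bar x}|^2$ along the minimizer, and the strict positivity of $P=V$ does nothing to secure it; however, that hypothesis is used only to construct the optimal reparametrization $\hat f$, and all your equality-case analysis actually needs is that $\inf_{\Gamma_{\bar x}}A$ coincides with the lower bound $2\int_a^b\sqrt{KP}\,\dt$, which follows from approximating reparametrizations (e.g.\ built from $\sqrt{P/(K+\eta)}$ with $\eta\to0$) without any nondegeneracy assumption on $K$.
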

%

\section{A Stability Theorem}\label{app:stab}
\begin{theorem}\label{teo:stab}
Let us assume that, for $n \in \NN$,
\begin{enumerate}
\item $\alpha_n \to \alpha \in (0,2)$, $\liminf_n T_{1,n} = T_1$, $\limsup_n T_{2,n} = T_2$;
\item $V_n \in \Poh$ is $\alpha_n$-homogeneous,
$V \in \Poh$ is $\alpha$-homogeneous, and
$V_n|_{\sphere} \to V|_{\sphere}$ in $\cont^1(\sphere)$;
\item $z_n \in \cont^2(T_{1,n},T_{2,n};\RR^d \setminus B_{\ve}(0))$ satisfies
$ \ddot z_n = \nabla V_n(z_n)$;
\item (up to time translations) $T_{1,n}< \bar t_n <T_{2,n}$ and
$|z_n(\bar t_n)| + |\dot z_n(\bar t_n)| \leq C$, for some $\bar t_n \to \bar t$, and $C>0$.
\end{enumerate}
Then there exists a subsequence $\suc{z_n}{k} \subset \suc{z}{n}$
and a function $\bar z \in \cont^2(T_1,T_2)$ such that
$z_{n_k}|_I$ converges in $\cont^2$ to $\bar z|_I$,
for every $I \subset (T_1,T_2)$ compact.
\end{theorem}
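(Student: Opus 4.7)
The strategy is a standard compactness-and-bootstrap argument: use conservation of energy (which holds away from the origin) to bound $|\dot z_n|$ uniformly, deduce a uniform $C^0$ bound on compact subintervals, pass to a $C^1$-limit via Ascoli-Arzelà, and then upgrade to $C^2$ by plugging back into the ODE. The crucial structural fact I would exploit is that the hypothesis $|z_n(t)| \geq \varepsilon > 0$ keeps every trajectory inside a fixed annulus where the $\nabla V_n$ are smooth and converge uniformly.

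First I would note that, since $z_n$ solves the autonomous Euler-Lagrange equation, the energy
\[
E_n = \tfrac12|\dot z_n(t)|^2 - V_n(z_n(t))
\]
is constant. Evaluating at $t=\bar t_n$, the bounds $|z_n(\bar t_n)|, |\dot z_n(\bar t_n)| \leq C$ together with $|z_n(\bar t_n)| \geq \varepsilon$ give $|E_n| \leq C'$ uniformly (here one uses $V_n|_{\sphere} \to V|_{\sphere}$ uniformly, hence an $n$-independent $L^\infty$ bound on $V_n|_{\sphere}$, and $\alpha_n \to \alpha \in (0,2)$ so that $\varepsilon^{\alpha_n}$ stays pinched between two positive constants). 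Since the homogeneity gives $V_n(z_n(t)) \leq V_{n,\max}/\varepsilon^{\alpha_n}$ for all $t$, the identity $|\dot z_n(t)|^2 = 2 E_n + 2 V_n(z_n(t))$ yields a uniform bound $|\dot z_n(t)| \leq M$ on all of $(T_{1,n}, T_{2,n})$.

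Next, fix a compact $I = [a,b] \subset (T_1, T_2)$; for $n$ large one has $I \subset (T_{1,n}, T_{2,n})$, and the Lipschitz estimate $|z_n(t) - z_n(\bar t_n)| \leq M|t - \bar t_n|$, combined with $|z_n(\bar t_n)| \leq C$, gives a uniform bound $|z_n(t)| \leq R$ on $I$. Because $V_n|_{\sphere} \to V|_{\sphere}$ in $C^1$ and $\alpha_n \to \alpha$, the homogeneous extensions satisfy $\nabla V_n \to \nabla V$ uniformly on the annulus $\{\varepsilon \leq |x| \leq R\}$; in particular $|\ddot z_n(t)| = |\nabla V_n(z_n(t))|$ is uniformly bounded on $I$, so the $\dot z_n$ are equi-Lipschitz. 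Ascoli-Arzelà then yields a subsequence $z_{n_k}$ converging in $C^1(I)$ to some $\bar z$; uniform convergence of $\nabla V_n$ on the annulus, together with $z_{n_k} \to \bar z$ in $C^0(I)$, promotes this to $\ddot z_{n_k} \to \nabla V(\bar z)$ uniformly on $I$. Hence $\bar z \in C^2(I)$ solves $\ddot{\bar z} = \nabla V(\bar z)$ and $z_{n_k} \to \bar z$ in $C^2(I)$. Finally, to obtain $C^2_{\mathrm{loc}}(T_1,T_2)$-convergence I would exhaust $(T_1,T_2)$ by compact intervals $I_k \nearrow (T_1,T_2)$ and extract a diagonal subsequence.

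The only mildly delicate point is keeping the energy bound uniform in $n$ in spite of the varying exponents $\alpha_n$; this is handled by the observation above that $\alpha_n$ lies in a compact subinterval of $(0,2)$, so quantities like $\varepsilon^{\alpha_n}$ and $\varepsilon^{-\alpha_n}$ are controlled independently of $n$. Everything else is a routine application of ODE compactness, made legitimate by the standing assumption that the trajectories do not approach the collision set.
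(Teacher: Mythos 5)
Your argument is correct and follows essentially the same path as the paper's: a uniform bound on $\nabla V_n$ outside $B_{\varepsilon}(0)$ (coming from homogeneity, $\alpha_n\to\alpha$ and the $\cont^1$-convergence on the sphere) combined with the anchoring condition 4 yields equibounds on $z_n$, $\dot z_n$, $\ddot z_n$ on compact subintervals, and then Ascoli--Arzel\`a, a bootstrap through the equation, and a diagonal extraction finish the proof. The only cosmetic difference is that you derive the velocity bound from conservation of energy, whereas the paper obtains it directly by integrating the uniformly bounded acceleration from $\bar t_n$; both routes are fine.
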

\begin{proof}
First of all let us observe that there exists a constant $C>0$
such that, for every $n$,
\[
\left| \nabla V_n(x) \right| \leq C, \quad \forall x \in \RR^d \setminus B_{\ve}(0).
\]
Let $k \in \NN$ and $I_k = [\bar t-k,\bar t+k] \cap [T_1,T_2]$.
We infer that, up to a subsequence, $\ddot z_n$ is
(defined and) bounded on $I_k$. Integrating assumption 4. we obtain that
$|z_n| + |\dot z_n| \leq C$ on $I_k$. Ascoli's Theorem guarantees that, again
up to a subsequence, there exists $\bar z$ such that $z_n \to \bar z$ in $\cont^1(I_k)$.
Passing to the limit in the equations we have that the convergence is indeed $\cont^2$,
and that $\bar z$ satisfies the limiting equation. By a diagonal procedure we easily conclude.
\end{proof}

\section{Properties of Zero-Energy Trajectories}\label{app:stime}
The aim of this appendix is to sum up some well known results about
the behavior of zero energy trajectories.
The first ones concern homothetic trajectories, which are motions
with constant angular part.
\begin{lemma}\label{lem:omo1}
Let us fix $\gamma >0$ and consider the functional
\[
 \action_{\mathrm{rad},\gamma} \left([a,b],r\right) :=
  \int_{a}^{b}\left[ \frac12 \dot r^2(t) + \frac{\gamma}{r^\alpha(t)} \right] \dt
\]
defined on $H^1\left((a,b);[0,+\infty)\right)$.
Then, for any $r_+ \geq r_- \geq 0$,
\[
\begin{split}
 \mathrm{hom} (r_-,r_+,\gamma)
& := \inf\left\{ \action_{\mathrm{rad},\gamma} \left([-T,T],r\right) :\,
        \begin{array}{l}
          T\geq0, r \in H^1(-T,T),\smallskip\\
          r(\pm T)=r_{\pm}
        \end{array}
     \right\}\\
& =  \frac{\sqrt{2\gamma}}{\alpha_*}\left( r_+^{\alpha_*} - r_-^{\alpha_*}\right).
\end{split}
\]
\end{lemma}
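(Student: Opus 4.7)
The plan is to establish the two inequalities separately, exploiting the zero-energy relation as the extremality condition. The lower bound follows from a pointwise arithmetic-geometric inequality that collapses the two-variable integral into a one-dimensional one in the radial variable $r$, while the upper bound is realized by an explicit homothetic trajectory satisfying $\frac12\dot r^2 = \gamma/r^\alpha$.

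\medskip
\textbf{Lower bound.} For any admissible $r\in H^1(-T,T)$ with $r(\pm T) = r_\pm$, the elementary inequality $A+B\geq 2\sqrt{AB}$ applied pointwise yields
\[
\frac12\dot r^2(t) + \frac{\gamma}{r^\alpha(t)} \;\geq\; 2\sqrt{\frac{\gamma\,\dot r^2(t)}{2 r^\alpha(t)}} \;=\; \sqrt{2\gamma}\,\frac{|\dot r(t)|}{r^{\alpha/2}(t)},
\]
with equality if and only if the zero-energy relation $\frac12\dot r^2 = \gamma/r^\alpha$ holds. Recognizing $r^{-\alpha/2}$ as the derivative (in $r$) of $r^{\alpha_*}/\alpha_*$, the fundamental theorem of calculus gives
\[
\int_{-T}^{T}\frac{|\dot r|}{r^{\alpha/2}}\dt \;\geq\; \left|\int_{-T}^{T}\frac{\dot r}{r^{\alpha/2}}\dt\right| \;=\; \frac{r_+^{\alpha_*}-r_-^{\alpha_*}}{\alpha_*},
\]
since $r_+\geq r_-$. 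Multiplying by $\sqrt{2\gamma}$ and combining gives $\action_{\mathrm{rad},\gamma}([-T,T],r)\geq \frac{\sqrt{2\gamma}}{\alpha_*}(r_+^{\alpha_*}-r_-^{\alpha_*})$, and taking the infimum over $T$ and $r$ proves ``$\geq$''.

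\medskip
\textbf{Upper bound.} To match the lower bound, I would exhibit an explicit zero-energy radial path saturating both inequalities used above. Solving $\dot r = \sqrt{2\gamma}\,r^{-\alpha/2}$ by separation of variables yields
\[
r(t) = \left[(1+\alpha/2)\sqrt{2\gamma}\,(t-t_0)\right]^{2/(2+\alpha)},
\]
where $t_0$ is chosen so that $r(-T)=r_-$ and $T$ is then determined by $r(T)=r_+$ (if $r_->0$ this gives a smooth, strictly increasing solution; if $r_-=0$ one simply sets $t_0=-T$). Along this trajectory, using $\frac12\dot r^2 = \gamma/r^\alpha$ and the change of variables $r=r(t)$,
\[
\action_{\mathrm{rad},\gamma}([-T,T],r) = \int_{-T}^{T}\dot r^2\dt = \int_{-T}^{T}\sqrt{2\gamma}\,\frac{\dot r}{r^{\alpha/2}}\dt = \sqrt{2\gamma}\int_{r_-}^{r_+}\frac{\dr}{r^{\alpha/2}} = \frac{\sqrt{2\gamma}}{\alpha_*}\!\left(r_+^{\alpha_*}-r_-^{\alpha_*}\right).
\]
This shows the bound in the lower half is sharp and gives the announced value of $\mathrm{hom}(r_-,r_+,\gamma)$.

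\medskip
\textbf{Main technical point.} The only subtlety lies in the case $r_-=0$, where both the integral $\int_{-T}^T |\dot r|/r^{\alpha/2}\dt$ and the explicit ejection motion must be verified to be well-defined. This is where the hypothesis $\alpha\in(0,2)$ (equivalently $\alpha_*>0$) is crucial: the antiderivative $r^{\alpha_*}/\alpha_*$ extends continuously to $r=0$, so the change-of-variables argument in both bounds remains valid, and the explicit solution $r(t)=[(1+\alpha/2)\sqrt{2\gamma}(t+T)]^{2/(2+\alpha)}$ belongs to $H^1(-T,T)$ with finite action. All other steps are purely algebraic.
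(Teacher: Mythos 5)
Your proof is correct, but it follows a genuinely different route from the paper's. The paper first establishes the \emph{existence} of a monotone minimizer by arguing as in Section \ref{sec:Bolza} (direct method applied to the Maupertuis functional), then invokes Corollary \ref{lem:generale} to obtain the zero-energy relation $\tfrac12\dot{\bar r}^2=\gamma/\bar r^{\alpha}$, and finally integrates this ODE to produce the explicit homothetic motion, from which the value of $\mathrm{hom}$ follows by evaluation. You bypass the existence question entirely: the pointwise inequality $\tfrac12\dot r^2+\gamma r^{-\alpha}\ge\sqrt{2\gamma}\,|\dot r|\,r^{-\alpha/2}$ (the pointwise analogue of the Cauchy--Schwarz step in the proof of Lemma \ref{lem:maup}) together with the fundamental theorem of calculus gives the lower bound for \emph{every} competitor, and the same explicit ejection solution supplies a matching upper bound. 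Your argument is more elementary and self-contained, and it makes the role of $\alpha_*>0$ transparent; the paper's version has the advantage of delivering the minimizer itself and the time $\bar T$, which are reused elsewhere (e.g.\ in Lemma \ref{lem:omo2}). The one point worth tightening is the identity $\int_{-T}^{T}\dot r\,r^{-\alpha/2}\dt=(r_+^{\alpha_*}-r_-^{\alpha_*})/\alpha_*$ for $H^1$ paths that may touch $r=0$ in the \emph{interior} of the interval, not only at an endpoint: either $\int_{-T}^{T}|\dot r|\,r^{-\alpha/2}\dt=+\infty$ and the lower bound is trivial, or the integral is finite and $r^{\alpha_*}$ is absolutely continuous with derivative $\alpha_*r^{-\alpha/2}\dot r$ a.e.\ (using that $\dot r=0$ a.e.\ on $\{r=0\}$ and that $u\mapsto u^{\alpha_*-1}$ is locally integrable near $0$), so the conclusion stands.
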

\begin{proof}
If $r_-=r_+$ then the result is trivial.
Otherwise, arguing as in Section \ref{sec:Bolza} we deduce the existence of a monotone increasing minimizer
$\bar r$. From Corollary \ref{lem:generale} we deduce that
\[
\frac12 \dot {\bar r}^2(t) = \frac{\gamma}{\bar r^\alpha (t)}
\quad \implies \quad
\dot {\bar r}(t) = \sqrt{2\gamma}\bar r^{-\alpha/2} (t).
\]
Integrating the last equation and imposing the boundary conditions
we obtain the explicit expression of $\bar r(t)$
\[
\bar r(t) =
\left[\frac{\alpha+2}{2}\sqrt{2\gamma}\,t +
\frac12\left(r_+^{(2+\alpha)/{2}} + r_-^{(2+\alpha)/{2}}\right)\right]^{{2}/({2+\alpha})}
\]
which is defined on $[-\bar T,\bar T]$, where
$\bar T = \left[(\alpha+2)\sqrt{2\gamma}\right]^{-1} \left(r_+^{(2+\alpha)/{2}} - r_-^{(2+\alpha)/{2}}\right)$.
\end{proof}
\begin{lemma}\label{lem:omo2}
Let $\xi \in \sphere$ and $r_+ \geq r_- \geq 0$.
Then
\[
\begin{split}
\mathrm{hom} (r_-,r_+,V_{\min}) & \leq
\inf
 \left\{ \action([-T,T];x):\,
    \begin{array}{l}
         T>0, \, x \in H^1(-T,T),\smallskip\\
         x({\pm}T)= r_{\pm} \xi
    \end{array}
 \right\}\\
& \leq \mathrm{hom} (r_-,r_+,V(\xi)).
\end{split}
\]
In particular, if $V(\xi)=V_{\min}$, then equality holds (and the infimum is
achieved by a path with constant direction $\xi$).
\end{lemma}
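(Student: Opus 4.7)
The plan is to prove the two inequalities separately, exploiting the simple structure of the problem in polar coordinates.

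For the upper bound, I would produce an explicit competitor by taking the path to be homothetic along the direction $\xi$: define $x(t):=r(t)\xi$ where $r\in H^1(-T,T)$ achieves the one-dimensional minimum $\mathrm{hom}(r_-,r_+,V(\xi))$ provided by Lemma~\ref{lem:omo1} with $\gamma=V(\xi)$. Since $|\dot x|^2=\dot r^2$ and, by $-\alpha$-homogeneity of $V$, $V(r\xi)=V(\xi)/r^\alpha$, one has $\action([-T,T];x)=\action_{\mathrm{rad},V(\xi)}([-T,T];r)=\mathrm{hom}(r_-,r_+,V(\xi))$, which gives the desired upper estimate by definition of the infimum.

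For the lower bound, I would pass to polar coordinates $x=rs$ with $r\geq0$ and $s\in\sphere$, so that $|\dot x|^2=\dot r^2+r^2|\dot s|^2$ and $V(x)=V(s)/r^\alpha$. Using $|\dot s|^2\geq0$ and $V(s)\geq V_{\min}$ pointwise, for any admissible $x$,
\[
\action([-T,T];x)=\int_{-T}^{T}\left[\tfrac12\dot r^2+\tfrac12 r^2|\dot s|^2+\frac{V(s)}{r^\alpha}\right]\dt\geq \int_{-T}^{T}\left[\tfrac12\dot r^2+\frac{V_{\min}}{r^\alpha}\right]\dt.
\]
The radial part $r$ satisfies $r(\pm T)=|r_\pm\xi|=r_\pm$, so the right-hand side is bounded below by $\mathrm{hom}(r_-,r_+,V_{\min})$ by the definition of $\mathrm{hom}$ in Lemma~\ref{lem:omo1}. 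Taking the infimum over all admissible $x$ yields the lower bound.

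Finally, when $V(\xi)=V_{\min}$ the two sides of the sandwich agree, so the infimum equals $\mathrm{hom}(r_-,r_+,V_{\min})$; moreover the competitor produced above is a minimizer, since equality holds in the chain of inequalities precisely when $|\dot s|\equiv 0$ and $V(s)\equiv V_{\min}$, which forces the direction to be constantly $\xi$. I do not expect a serious obstacle here: the only mild subtlety is treating the degenerate case $r_-=0$ (a homothetic ejection/collision), which is already covered by Lemma~\ref{lem:omo1} and poses no additional difficulty in the bounds above.
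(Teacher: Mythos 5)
Your proof is correct and follows essentially the same route as the paper: the lower bound by discarding the angular kinetic energy and bounding $V(s)\geq V_{\min}$ pointwise, and the upper bound via the homothetic competitor $r(t)\xi$ from Lemma \ref{lem:omo1}. The remarks on the equality case and on $r_-=0$ are consistent with what the paper leaves implicit.
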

\begin{proof}
The estimate from below follows straightforwardly from the previous lemma,
once one notices that, for any $x$ satisfying the constraint,
\[
 \action(x) = \int_{a}^b \left[ \frac12 \dot r^2 + \frac12 r^2 |\dot s|^2  +\frac{V(s)}{r^\alpha}\right]                \geq \int_{a}^b \left[ \frac12 \dot r^2 + \frac{V_{\min}}{r^\alpha}\right].
\]
On the other hand,
\[
x(t) = \bar r (t)\xi, \qquad t \in [-\bar T,\bar T],
\]
where $\bar r (t)$ and $\bar T$ have been defined in the proof of the
previous lemma, satisfies the constraint, providing the estimate from above.
\end{proof}
\begin{lemma}
Let us suppose that $x=rs$ satisfies both
\eqref{eq:dynsys} and \eqref{eq:dynsys2} on $(t_0,+\infty)$. Then
\begin{enumerate}
\item $r(t) \to +\infty$ and $\dot r(t)>0$, as $t \to +\infty$;
\item $\dot r(t) \to 0$ and $|\dot s(t)| \to 0$, as $t \to +\infty$.
\end{enumerate}
\end{lemma}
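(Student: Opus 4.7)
The plan is to derive everything from the Lagrange--Jacobi identity for $r^2=|x|^2$ together with the zero--energy relation \eqref{eq:dynsys2}. Differentiating $|x|^2$ twice along the trajectory and using \eqref{eq:dynsys}, Euler's relation for the $-\alpha$-homogeneous potential ($x\cdot\nabla V(x)=-\alpha V(x)$), and the energy identity $|\dot x|^2=2V(x)$, one obtains
\[
\frac{d^{2}}{dt^{2}}\,r^{2}(t)\;=\;2|\dot x(t)|^{2}+2x(t)\cdot\nabla V(x(t))\;=\;2(2-\alpha)V(x(t))\;>\;0.
\]
Thus $r^{2}$ is \emph{strictly convex} on $(t_{0},+\infty)$, and this observation will drive both items.

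First I would prove $r(t)\to+\infty$ by contradiction. Suppose $r$ stays bounded, say $r(t)\le R$ on $(t_{0},+\infty)$. Then by homogeneity $V(x(t))=V(s(t))/r(t)^{\alpha}\ge V_{\min}/R^{\alpha}=:c>0$, so the Lagrange--Jacobi identity yields $(r^{2})''(t)\ge 2(2-\alpha)c$. Integrating twice forces $r^{2}(t)\to+\infty$, a contradiction. Hence $r$ is unbounded. Since $(r^{2})'$ is non-decreasing (being the antiderivative of a positive quantity) and $r^{2}$ cannot tend to $+\infty$ while $(r^{2})'$ remains $\le 0$, the derivative $(r^{2})'(t)=2r(t)\dot r(t)$ is eventually positive, and therefore $\dot r(t)>0$ for $t$ large; monotonicity and unboundedness then give $r(t)\to+\infty$.

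Once item~1 is established, item~2 follows immediately from the conservation of (zero) energy in polar form:
\[
\tfrac12\dot r(t)^{2}+\tfrac12 r(t)^{2}|\dot s(t)|^{2}\;=\;\frac{V(s(t))}{r(t)^{\alpha}}.
\]
Since $V|_{\sphere}$ is bounded above by $V_{\max}$ and $r(t)\to+\infty$ with $\alpha>0$, the right-hand side tends to $0$. Both non-negative terms on the left must therefore vanish in the limit, yielding $\dot r(t)\to 0$ and $r(t)^{2}|\dot s(t)|^{2}\to 0$; in particular, using $r(t)\to+\infty$, one concludes the stronger statement $|\dot s(t)|\to 0$.

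There is no serious obstacle in this argument: the Lagrange--Jacobi identity plus strict positivity of $V$ away from the origin does all the work. The only mildly delicate point is the dichotomy used to pass from "$r$ unbounded" to "$r\to+\infty$ monotonically (eventually)"; this is handled cleanly by the fact that $(r^{2})'$ is monotone non-decreasing, hence of one sign from some time on, and its sign is forced to be positive by the unboundedness of $r^{2}$.
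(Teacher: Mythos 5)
Your proof is correct and follows essentially the same route as the paper's: the Lagrange--Jacobi identity gives strict convexity of $r^2$, which (combined with the uniform positive lower bound on $V$ when $r$ is bounded) forces $r\to+\infty$ with $\dot r>0$ eventually, and the zero-energy relation in polar form then yields item~2. You simply fill in the details that the paper leaves implicit.
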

\begin{proof}
The first part follows
from the (strict) convexity of $r^2(t)$, which is implied
by the Lagrange-Jacobi identity \eqref{LJ} (see also Corollary \ref{cor:convex}).
On the other hand, from \eqref{eq:ELrs},
we immediately deduce that  both $\dot r^2(t)$ and $r^2(t)\dot s^2(t)$
tend to 0 as $t \to +\infty$. Since $r(t)$, by assumption, diverges
the second assertion follows.
\end{proof}

In the next theorem
we prove the asymptotic estimates for parabolic solutions as time diverges
(see for instance \cite{Chazy2,Chazy1,Chenciner1998,SaaHulk81,Saari71}).
The proof we propose (that can be extended to non necessarily homogeneous potentials)
is different from the classical ones, and it is similar to the one that the
first two authors exploited for the asymptotic behavior near collisions
for $N$-body type systems in \cite{BFT2}.
\begin{theorem}\label{teo:stime}
Let us suppose that $x=rs$ satisfies both
\eqref{eq:dynsys} and \eqref{eq:dynsys2} on $(t_0,+\infty)$.
Then there exists $\gamma >0$ such that:
\begin{enumerate}
\item[{\rm(a)}] $\displaystyle r(t) \sim (Kt)^{{2}/(2+\alpha)}$, as $t \to +\infty$,
                 where $K := \frac{\alpha+2}{2}\sqrt{2\gamma}$;
\item[{\rm(b)}] $\displaystyle \dot r(t) \sim \sqrt{2\gamma}(Kt)^{-\alpha/(2+\alpha)}$, as $t \to +\infty$;
\item[{\rm(c)}] $\displaystyle \lim_{t \to +\infty}V(s(t)) = \gamma$;
\item[{\rm(d)}] $\displaystyle \lim_{t \to +\infty} \nabla_T V(s(t)) = 0$;
\item[{\rm(e)}] $\displaystyle \lim_{t \to +\infty} {\rm dist}\left(C^{\gamma},s(t)\right) =0$,
                where $C^{\gamma} = \left\{ s : V(s) =\gamma, \nabla_T V(s)=0\right\}$.
\end{enumerate}
\end{theorem}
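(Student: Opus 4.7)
The strategy is to identify a scalar quantity monotone along the trajectory (whose limit yields the constant $\gamma$), and then to pass to McGehee-type coordinates in which the angular dynamics becomes autonomous.

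First, I would consider
\[
\psi(t) := \tfrac{1}{2}\,r^{\alpha}(t)\,\dot r^{2}(t).
\]
Using the Lagrange--Jacobi identity $r\ddot r+\dot r^{2}=2\alpha_{*}V(s)/r^{\alpha}$ (a consequence of \eqref{LJ} and homogeneity), together with the energy relation $r^{\alpha}\dot r^{2}+r^{2+\alpha}|\dot s|^{2}=2V(s)$, a direct manipulation yields
\[
\dot\psi \;=\; \alpha_{*}\,\dot r\,r^{1+\alpha}|\dot s|^{2} \;\geq\; 0,
\]
since $\dot r>0$ by the preceding lemma. As $0\le\psi\le V(s)\le V_{\max}$, the monotone limit $\gamma:=\lim_{t\to+\infty}\psi(t)$ exists in $[0,V_{\max}]$; moreover $\gamma>0$, for otherwise $\psi\equiv 0$ would force $\dot r\equiv 0$, against $r(t)\to+\infty$. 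Then $r^{\alpha}\dot r^{2}\to 2\gamma$ together with $\dot r>0$ give $\frac{\d}{\dt}r^{(2+\alpha)/2}\to K=\tfrac{2+\alpha}{2}\sqrt{2\gamma}$, and (a), (b) follow by integration.

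To address (c)--(e), I would introduce a rescaled time $\tau$ via $\d\tau/\dt=r^{-(2+\alpha)/2}$ (so $\tau\to+\infty$ as $t\to+\infty$, by (a)) and the McGehee-type variables
\[
u \;:=\; r^{\alpha/2}\dot r,\qquad \eta \;:=\; r^{(2+\alpha)/2}\dot s.
\]
A direct calculation converts \eqref{eq:ELrs} into the autonomous system (here $'=\d/\dtau$)
\[
s' = \eta,\qquad u' = \alpha_{*}|\eta|^{2},\qquad \eta' = -\alpha_{*}u\,\eta + \nabla_{T}V(s) - |\eta|^{2}s,
\]
subject to the conserved quantity $u^{2}+|\eta|^{2}=2V(s)$. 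All variables, and hence $|\eta'|$, are uniformly bounded. Since $u$ is monotone with $u\to\sqrt{2\gamma}$, integrating the equation $u'=\alpha_{*}|\eta|^{2}$ gives $\int_{0}^{+\infty}|\eta|^{2}\dtau<+\infty$; combined with the uniform Lipschitz bound on $\eta$, a Barbalat-type argument (if $|\eta(\tau_{n})|\ge\delta$ then $|\eta|\ge\delta/2$ on disjoint intervals of fixed length around the $\tau_{n}$, contradicting $L^{2}$-integrability) forces $|\eta(\tau)|\to0$. The energy constraint then yields $V(s(\tau))\to\gamma$, which is (c).

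Finally, (d) and (e) would follow from an $\omega$-limit analysis on the autonomous system. Given $\bar s$ in the $\omega$-limit of $s(\tau)$ and $\tau_{n}\to+\infty$ with $s(\tau_{n})\to\bar s$, the shifted trajectories $(s,\eta,u)(\cdot+\tau_{n})$ admit, by Ascoli, a locally uniform limit; the decay $\eta\to 0$ and the convergence $u\to\sqrt{2\gamma}$ force this limit to be the stationary orbit $(\bar s,0,\sqrt{2\gamma})$, and substituting it into the $\eta$-equation gives $\nabla_{T}V(\bar s)=0$. Combined with (c), this shows $\bar s\in C^{\gamma}$, hence (e); item (d) then follows from the continuity of $\nabla_{T}V$ on $\sphere$ and compactness. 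The main obstacle is step (c): the autonomous structure provided by the McGehee rescaling is essential in order to upgrade the integral control $\int|\eta|^{2}<+\infty$ into the pointwise decay $|\eta(\tau)|\to 0$, which in turn unlocks everything that follows.
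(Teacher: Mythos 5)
Your proposal is correct, and while parts (a)--(b) coincide with the paper's argument (your $\psi$ is exactly $-\Gamma$, where $\Gamma:=\tfrac12 r^{2+\alpha}|\dot s|^2-V(s)=-\tfrac12 r^\alpha\dot r^2$ is the monotone quantity the paper uses, with the same derivative computation and the same l'H\^opital/integration step), your treatment of (c)--(e) follows a genuinely different route. For (c) the paper only extracts $\liminf_t V(s(t))=\gamma$ from the integrability of $\dot\Gamma$, and then rules out $\limsup_t V(s(t))>\gamma$ by a rather delicate contradiction: it builds an oscillation sequence $(t_k)$, bounds the same integral from below in two ways (once by $\sum_k\log(r(t_{2k+1})/r(t_{2k}))$, once by $\sum_k[\log(\cdots)]^{-1}$ via Cauchy--Schwarz and the uniform continuity of $V$ on $\sphere$), and observes that the two series cannot both converge. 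You instead introduce the full autonomous McGehee system $(s,u,\eta)$ from the start -- your change of variables and the resulting equations are correct, and the paper itself uses the same $\tau$-rescaling, but only locally and only in the proof of (d) -- and obtain (c) from $\int|\eta|^2\dtau<+\infty$ (a reformulation of the paper's integrability of $\dot\Gamma$) together with the uniform Lipschitz bound on $\eta$ and a Barbalat-type argument; (d) and (e) then come from a standard $\omega$-limit analysis of the autonomous flow, whereas the paper proves (d) by a separate contradiction argument on $s''$ and deduces (e) from (c) and (d) by compactness. Both routes are sound; yours is more systematic and unifies (c)--(e) under one dynamical framework, at the cost of setting up the blown-up system, while the paper's proof of (c) is more bare-handed but computationally heavier. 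The only points worth spelling out in a final write-up are the disjointness of the intervals in the Barbalat step (pass to a subsequence of the $\tau_n$) and the passage to the limit in the $\eta$-equation along the shifted trajectories (use the integral form of the ODE and uniform convergence on compact $\tau$-intervals), both of which are routine.
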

\begin{proof}
By the previous lemma, we can assume that $\dot r(t)>0$ on $(t_0,+\infty)$.\\
In order to prove {\rm(a)} we define the function
\[
\Gamma(t) := \frac12 r^{\alpha + 2}(t)|\dot s(t)|^2 - V(s(t))
           = -\frac12 r^{\alpha}(t)\dot r^2(t),
\quad t \in (t_0,+\infty)
\]
(the last equality follows from the conservation of energy).
Since $r(t)>0$ and $\dot r(t)>0$, $\Gamma(t)$ is a strictly negative and bounded quantity,
indeed
\[
-V_{\max}\leq - V(s(t)) \leq \Gamma(t) < 0.
\]
Multiplying the Euler-Lagrange equation in $s$ (see \eqref{eq:ELrs}) by $\dot s$ we obtain
\[
r^{2+\alpha} \ddot s \dot s + 2r^{1+\alpha}\dot r|\dot s|^2 - \nabla_T V(s)\dot s = 0;
\]
hence the derivative of the function $\Gamma$ satisfies, for large $t$,
\[
\dot \Gamma(t) = -\frac{2-\alpha}{2} r^{1+\alpha}(t)\dot r(t)|\dot s(t)|^2 < 0.
\]
$\Gamma (t)$ is then bounded and (strictly) decreasing; hence there exists
$\gamma >0$ such that
\[
\lim_{t \to +\infty} \Gamma (t) = -\gamma
\quad \text{ and } \quad
\lim_{t \to +\infty} r^{{\alpha}/{2}}(t)\dot r(t) = \sqrt{2\gamma}.
\]
Therefore, using de l'Hopital rule we have
\[
\lim_{t \to +\infty}\frac{r^{{\alpha}/{2}+1}(t)}{\sqrt{2\gamma}\left(\frac{\alpha}{2}+1 \right)t}
= \lim_{t \to +\infty} \frac{r^{{\alpha}/{2}}(t)\dot r(t)}{\sqrt{2\gamma}} = 1
\]
and we deduce the asymptotic behavior of $r(t)$ as $t \to +\infty$.
Straightforwardly we now prove (b), indeed, we define $K := \frac{\alpha+2}{2}\sqrt{2\gamma}$ and we obtain
\[
\lim_{t \to +\infty} \frac{\dot r(t)}{\sqrt{2\gamma} (Kt)^{-{\alpha}/(2+\alpha)} } =
\lim_{t \to +\infty} \frac{r^{{\alpha}/{2}}(t)\dot r(t)}{\sqrt{2\gamma}}
\left[\frac{(Kt)^{{2}/(2+\alpha)}}{r(t)}\right]^{{\alpha}/{2}} = 1.
\]
In order to claim (c) we remark that $\Gamma$ is bounded on $(t_0,+\infty)$, hence
its derivative has a finite integral on the same interval, that is
\[
\int_{t_0}^{+\infty}\frac{\dot r(t)}{r(t)} r^{2+\alpha}(t)|\dot s(t)|^2\dt < +\infty.
\]
Since ${\dot r(t)}/{r(t)} \sim {2}/(2+\alpha)\,t^{-1}$ as $t \to +\infty$, then necessarily
\[
\liminf_{t \to +\infty} r^{2+\alpha}(t)|\dot s(t)|^2 = 0,
\]
or, equivalently,
\[
\liminf_{t \to +\infty} V(s(t)) = \gamma.
\]
In order to conclude we need to show that also the superior limit of $V(s)$,
as $t \to +\infty$, is $\gamma$.
By the sake of contradiction let us assume that for some $C>0$
\[
\limsup_{t \to +\infty} V(s(t)) = \gamma + C,
\quad \text {that is} \quad
\limsup_{t \to +\infty} r^{2+\alpha}(t)|\dot s(t)|^2 = 2C.
\]
Then there exists a sequence $\suc{t}{k}$ such that
\[
\begin{split}
& t_k \to +\infty, \text{ as } k \to +\infty, \\
& r^{2+\alpha}(t_{2k})|\dot s(t_{2k})|^2 = \frac{2C}{3}, \forall \, k,
  \text{ and } V(s(t_{2k})) \to \frac{C}{3} + \gamma, \text{ as }k \to +\infty, \\
& r^{2+\alpha}(t_{2k+1})|\dot s(t_{2k+1})|^2 = \frac{4C}{3}, \forall \, k,
  \text{ and } V(s(t_{2k+1})) \to \frac{2C}{3} + \gamma, \text{ as } k \to +\infty, \\
& r^{2+\alpha}(t)|\dot s(t)|^2 \in \left(\frac{2C}{3},\frac{4C}{3}\right)
  \text{ for every } t \in \bigcup_k(t_{2k},t_{2k+1}).
\end{split}
\]
Using the monotone convergence of $\Gamma(t)$ to $\gamma$ we deduce that
there exists $\bar t_0 \geq t_0$ such that, for every $t \geq \bar t_0$
\[
\sqrt{\gamma} \leq r^{{\alpha}/{2}}(t)\dot r(t) \leq \sqrt{2\gamma},
\]
and, integrating on $[\bar t_0,t]$,
\begin{equation*}
\frac{2+\alpha}{2}\sqrt{\gamma}\,t + C_{\sqrt{\gamma}}
\leq r^{(2+\alpha)/2}(t)
\leq \frac{2+\alpha}{2}\sqrt{2\gamma}\,t + C_{\sqrt{2\gamma}},
\end{equation*}
where $C_{\eta}:= r^{(2+\alpha)/2}(\bar t_0)-\bar t_0\frac{2+\alpha}{2}\eta$,
$\eta \in \{\sqrt{\gamma},\sqrt{2\gamma} \}$.
We will obtain a contradiction using the properties of the sequence $\suc{t}{k}$ and the previous two estimates.
Indeed, on one hand we have
\begin{multline*}
+\infty > \int_{t_0}^{+\infty}\frac{\dot r(t)}{r(t)} r^{2+\alpha}(t)|\dot s(t)|^2\dt
        \geq \sum_k \int_{t_{2k}}^{t_{2k+1}} \frac{\dot r(t)}{r(t)} r^{2+\alpha}(t)|\dot s(t)|^2\dt \\
        \geq \frac{2C}{3} \sum_k \log \frac{r(t_{2k+1})}{r(t_{2k})}
        \geq \frac{4C}{3(2+\alpha)} \sum_k \log \frac
          { (2+\alpha)\sqrt{\gamma}\,t_{2k+1} + 2C_{\sqrt{\gamma}}}
          { (2+\alpha)\sqrt{2\gamma}\,t_{2k} + 2C_{\sqrt{2\gamma}}}.
\end{multline*}
On the other hand, since the continuity of $V(s(\cdot))$ implies
the existence of $M >0$ such that $|s(t_{2k})-s(t_{2k+1})| > M$ for every $k$,
we have
\begin{multline*}
M^2 < |s(t_{2k})-s(t_{2k+1})|^2 \leq \left(\int_{t_{2k}}^{t_{2k+1}} |\dot s(t)| \dt \right)^2\\
    \leq \int_{t_{2k}}^{t_{2k+1}} \frac{\dot r(t)}{r(t)} r^{2+\alpha}(t)|\dot s(t)|^2\dt
         \int_{t_{2k}}^{t_{2k+1}} \frac{\dt}{r^{1+\alpha}(t)\dot r(t)}.
\end{multline*}
Since
\[
r^{1+\alpha}(t)\dot r(t) = r^{{\alpha}/{2}}(t)\dot r(t)\cdot r^{(2+\alpha)/2}(t)
\geq  \frac{\sqrt{\gamma}}{2}\left((2+\alpha) \sqrt{\gamma}\,t + 2C_{\sqrt{\gamma}}\right),
\]
then
\begin{multline*}
\int_{t_{2k}}^{t_{2k+1}} \frac{\dt}{r^{1+\alpha}(t)\dot r(t)} \leq
\frac{2}{\sqrt{\gamma}}\int_{t_{2k}}^{t_{2k+1}} \frac{\dt}{(2+\alpha)\sqrt{\gamma}\,t + 2C_{\sqrt{\gamma}}}\\
= \frac{2}{(2+\alpha)\gamma}
\log \frac{(2+\alpha) \sqrt{\gamma}\,t_{2k+1} + 2 C_{\sqrt{\gamma}}}
          {(2+\alpha) \sqrt{\gamma}\,t_{2k} + 2 C_{\sqrt{\gamma}}}
\end{multline*}
and
\begin{multline*}
+\infty  > \int_{t_0}^{+\infty}\frac{\dot r(t)}{r(t)} r^{2+\alpha}(t)|\dot s(t)|^2\dt 	\\
         \geq M^2 \frac{(2+\alpha)\gamma}{2} \sum_k \left[
        \log \frac{(2+\alpha) \sqrt{\gamma}\,t_{2k+1} + 2C_{\sqrt{\gamma}}}
          {(2+\alpha) \sqrt{\gamma}\,t_{2k} + 2C_{\sqrt{\gamma}}} \right]^{-1}
\end{multline*}
Hence,
\begin{multline*}
+\infty > \int_{t_0}^{+\infty}\frac{\dot r(t)}{r(t)} r^{2+\alpha}(t)|\dot s(t)|^2\dt
 \geq \frac{2C}{3(2+\alpha)} \sum_k \log \frac
          { (2+\alpha)\sqrt{2\gamma}\,t_{2k+1} + 2C_{\sqrt{2\gamma}}}
          { (2+\alpha)\sqrt{\gamma}\,t_{2k} + 2C_{\sqrt{\gamma}}}      \\
 + M^2 \frac{(2+\alpha)\gamma}{4} \sum_k \left[
        \log \frac{(2+\alpha) \sqrt{\gamma}\,t_{2k+1} + 2C_{\sqrt{\gamma}}}
          {(2+\alpha) \sqrt{\gamma}\,t_{2k} + 2C_{\sqrt{\gamma}}} \right]^{-1}.
\end{multline*}
We obtain a contradiction when we impose the convergence of both series
(with positive terms) at the right hand side.

We now turn to assertion (d), that is equivalent to
\[
\lim_{t \to +\infty} r^{2+\alpha}(t) \ddot s(t)=0,
\]
indeed in the Euler-Lagrange equation in the variable $s$ (see \eqref{eq:ELrs}) both terms
$-2r^{1+\alpha}\dot r\dot s = -2r^{(2+\alpha)/2}\dot s\,r^{{\alpha}/{2}}\dot r$
and $-r^{2+\alpha}|\dot s|^2 s$ are infinitesimal as $t \to +\infty$
(indeed $r^{2+\alpha}|\dot s|^2$ is infinitesimal and $r^{{\alpha}/{2}}\dot r$ remains bounded).
We proceed by contradiction: suppose there exists
$\suc{t}{k}$ such that, as $k \to +\infty$, $t_k \to +\infty$ and $\nabla_TV(s(t_k)) \not \to 0$.
Up to subsequences
we deduce the existence of $\bar s \in \sphere$ and $\sigma \neq 0$ such that
\[
s(t_k) \to \bar s,\text{ as $k \to +\infty$ and } \nabla_TV(\bar s) = \sigma.
\]
For any $h>0$ and $\ve>0$ we have that, since, for $t$ large, $|\dot s(t)|<\ve$,
\[
|s(t)-s(t_k)|<\ve h, \quad \forall t \in \bigcup_k [t_k,t_k+h]
\]
and
\[
|s(t)-\bar s| \leq |s(t)-s(t_k)|+|s(t_k)-\bar s|<\ve (h+1), \quad \forall t \in \bigcup_k [t_k,t_k+h].
\]
The convergence of $s(t)$ to $\bar s$ is then uniform on $\bigcup_k [t_k,t_k+h]$,
for any $h>0$, hence, by continuity,
\[
\sup_{t \in [t_k,t_k+h]} |\nabla_TV(s(t))-\sigma|  \to 0 \quad \text{and} \quad
\sup_{t \in [t_k,t_k+h]} |r^{2+\alpha}(t) \ddot s(t)-\sigma| \to 0,
\]
as $k \to +\infty$.
In order to obtain a contradiction we perform the time scaling (see \cite{McG1974})
\[
\tau = \int_{t_0}^{+\infty} \frac{\dt}{r^{(2+\alpha)/2}(t)}
\]
which maps $[t_0,+\infty)$ into $[0,+\infty)$ (we have used here the asymptotic for $r(t)$).
Letting $^\prime{}:= {\d}/{\d\tau}$
we have
\[
\lim_{\tau \to +\infty} |s'(\tau)|^2 = \lim_{t \to +\infty} r^{2+\alpha}(t)|\dot s(\tau)|^2 = 0
\]
while, by contradiction, for any fixed $h>0$
\[
\sup_{t \in [t_k,t_k+h]} |r^{2+\alpha}(t) \ddot s(t)-\sigma| =
\sup_{\tau \in [\tau_k,\tau_k+\tilde h]} |s''(\tau)-\sigma|  \to 0, \quad \text{as $k \to +\infty$}.
\]
We obtain the contradiction
\[
0 = \lim_{k \to +\infty} |s'(\tau_k +h) - s'(\tau_k)|
  = \lim_{k \to +\infty} \left| \int_{\tau_k}^{\tau_k+h}s''(\tau) \dtau \right|
  = h |\sigma| \neq 0.
\]
Finally, assertion (e) follows directly from (c) and (d).
\end{proof}
\begin{corollary}\label{coro:lim_r}
Let us suppose that $x=rs$ satisfies both
\eqref{eq:dynsys} and \eqref{eq:dynsys2} on $(-\infty,t_*)\cup(t_{**},+\infty)$.
Then there exist constants $\Upsilon_\pm>0$ such that
\[
\lim_{t \to \pm\infty} \frac{r^{(2+\alpha)/2}(t)}{t} = \Upsilon_\pm>0
\]
\end{corollary}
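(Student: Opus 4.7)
The plan is to read this off Theorem \ref{teo:stime}(a) essentially verbatim, together with a time-reversal argument for the tail at $-\infty$. Since the hypotheses are precisely that $x$ satisfies \eqref{eq:dynsys} and \eqref{eq:dynsys2} on $(t_{**},+\infty)$ (respectively $(-\infty,t_*)$), the theorem is applicable with no additional work; the content of the corollary is just to rewrite conclusion (a) with exponent $(2+\alpha)/2$ on the left, and to record the positivity of the resulting constant.

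For the $+\infty$ tail, I would invoke Theorem \ref{teo:stime}(a) directly: it yields a constant $\gamma_+>0$, with $\gamma_+ = \lim_{t\to+\infty}V(s(t))\geq V_{\min}>0$ by part (c) of the same theorem, and the asymptotic
\[
r(t) \sim (K_+ t)^{2/(2+\alpha)}, \qquad K_+ := \tfrac{\alpha+2}{2}\sqrt{2\gamma_+}.
\]
Raising both sides to the power $(2+\alpha)/2$ gives
\[
\lim_{t\to+\infty}\frac{r^{(2+\alpha)/2}(t)}{t}=K_+=:\Upsilon_+>0,
\]
which is the desired conclusion at $+\infty$.

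For the $-\infty$ tail, I would consider the time-reversed trajectory $\tilde x(t):=x(-t)$, defined for $t>-t_*$. Because the Lagrangian $\tfrac12|\dot x|^2+V(x)$ is autonomous and quadratic in velocity, both \eqref{eq:dynsys} and the zero-energy relation \eqref{eq:dynsys2} are invariant under $t\mapsto -t$, so $\tilde x$ again satisfies the hypotheses of Theorem \ref{teo:stime}. Applying the theorem to $\tilde x$ produces a constant $\Upsilon_->0$ such that $\tilde r^{(2+\alpha)/2}(t)/t\to \Upsilon_-$ as $t\to+\infty$, and rewriting in the original time variable gives $r^{(2+\alpha)/2}(t)/|t|\to \Upsilon_-$ as $t\to -\infty$.

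There is no real obstacle to this corollary: the only minor point to flag is the sign convention at $-\infty$ (since $r>0$ while $t<0$, the ratio $r^{(2+\alpha)/2}(t)/t$ naturally converges to $-\Upsilon_-$; one either interprets the statement with $|t|$ in the denominator, or reads $\Upsilon_\pm$ as signed). In any case, since $\Upsilon_\pm$ enters the only subsequent use of this corollary, namely Lemma \ref{lem:asintotiche}, exclusively through $\Upsilon_\pm^2$, the sign is immaterial for the applications.
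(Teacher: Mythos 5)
Your proposal is correct and is essentially the paper's own (implicit) derivation: the corollary is stated as an immediate consequence of Theorem \ref{teo:stime}(a), obtained by raising the asymptotic $r(t)\sim (Kt)^{2/(2+\alpha)}$ to the power $(2+\alpha)/2$ and using time reversal for the tail at $-\infty$. Your remark about the sign at $-\infty$ is a fair observation about the statement's convention, and you rightly note it is harmless since the corollary is only used through $\Upsilon_\pm^2$ in Lemma \ref{lem:asintotiche}.
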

\end{appendices}
%


\end{document}